\documentclass[onefignum,onetabnum]{siamart171218}



\ifpdf
\hypersetup{
  pdftitle={An Example Article},
  pdfauthor={D. Doe, P. T. Frank, and J. E. Smith}
}
\fi


\usepackage{amsmath}
\usepackage{amssymb}
\usepackage{color}
\usepackage{epsfig}
\usepackage{tikz}
\usepackage{algorithmic}
\usepackage{mathtools}
\usepackage{geometry}
\usepackage{longtable}
\usepackage{supertabular}
\usepackage{url}
\usepackage{blkarray}
\usepackage{enumitem}
\usepackage{multirow}
\usepackage{setspace}
\usepackage{float}
\usepackage[title]{appendix}
\usepackage{empheq}
\usepackage{mathtools}

\newtheorem{assumption}{Assumption}

\DeclareMathOperator*{\argmin}{argmin}
\DeclareMathOperator*{\arginf}{arginf}

\DeclareRobustCommand{\VAN}[3]{#2}
\DeclareRobustCommand{\DE}[3]{#2}

\newcommand{\E}{\mathbb{E}}

\newcommand{\exclude}[1]{}

\newcommand{\MP}{\mathcal{P}}

\newcommand{\MN}{\mathcal{N}}
\newcommand{\MM}{\mathcal{M}}
\newcommand{\innerit}{t}
\newcommand{\outerit}{\ell}

\renewcommand{\mathfrak}{\mathcal}

\newcommand{\s}[1]{{\mathcal{#1}}^*}

\newcommand{\sigmamin}{\sigma_{\scriptsize\mbox{min}}}
\newcommand{\sigmamax}{\sigma_{\scriptsize\mbox{max}}}

\title{Adaptive Sequential SAA for Solving \\ Two-stage Stochastic Linear Programs}
\author{Raghu Pasupathy\thanks{Department of Statistics, Purdue University, West Lafayette, IN, USA, (\email{pasupath@purdue.edu}).} \and Yongjia Song\thanks{Department of Industrial Engineering, Clemson University, Clemson, SC, USA, (\email{yongjis@clemson.edu}).}}

\begin{document}

\maketitle

\begin{abstract}
We present adaptive sequential SAA (sample average approximation) algorithms to solve large-scale two-stage stochastic linear programs. The iterative algorithm framework we propose is organized into \emph{outer} and \emph{inner} iterations as follows: during each outer iteration, a sample-path problem is implicitly generated using a sample of  observations or ``scenarios," and solved only \emph{imprecisely}, to within a tolerance that is chosen \emph{adaptively}, by balancing the estimated statistical error against solution error. The solutions from prior iterations serve as \emph{warm starts} to aid efficient solution of the (piecewise linear convex) sample-path optimization problems generated on subsequent iterations. The generated scenarios can be independent and identically distributed (iid), or dependent, as in Monte Carlo generation using Latin-hypercube sampling, antithetic variates, or randomized quasi-Monte Carlo. We first characterize the almost-sure convergence (and convergence in mean) of the optimality gap and the distance of the generated stochastic iterates to the true solution set. We then characterize the corresponding iteration complexity and work complexity rates as a function of the sample size schedule, demonstrating that the best achievable work complexity rate is Monte Carlo canonical and analogous to the generic $\mathcal{O}(\epsilon^{-2})$ optimal complexity for non-smooth convex optimization. We report extensive numerical tests that indicate favorable performance, due primarily to the use of a sequential framework with an optimal sample size schedule, and the use of warm starts. The proposed algorithm can be stopped in finite-time to return a solution endowed with a probabilistic guarantee on quality.    
\end{abstract}

\begin{keywords}
Two-stage Stochastic Programming, Sample Average Approximation, Retrospective Approximation, Sequential Sampling
\end{keywords}

\begin{AMS}
  90C15, 90C06
\end{AMS}

\section{INTRODUCTION} 
The \emph{two-stage stochastic linear program} (2SLP) is that of minimizing the real-valued function $c^\top x + \mathbb{E}[Q(x,\xi)]$ with respect to decision variables $x \in \mathbb{R}^{n_1}_+$ over a set of linear constraints $\mathcal{X} := \{x \in \mathbb{R}^{n_1}_+: Ax=b\}$, where $Q(x,\xi)$ is itself the optimal value of a random linear program (LP) parameterized by $x$. Crucially, in 2SLPs, the term $\mathbb{E}[Q(x,\xi)]$ appearing in the objective function is not observable directly; instead, $\mathbb{E}[Q(x,\xi)]$ can only be \emph{estimated} to requested precision as the sample mean $Q_n(x) := n^{-1}\sum_{i=1}^n Q(x,\xi_i)$ of optimal values $Q(x,\xi_i),i=1,2,\ldots,n$ from randomly sampled LPs. The generation of the random LPs to estimate $\mathbb{E}[Q(x,\xi)]$ is usually accomplished through Monte Carlo sampling, by generating identically distributed ``scenarios" $\xi_i, i=1,2,\ldots,n$ that may or may not be independent. 

It appears that 2SLPs were originally introduced by \cite{dantzig1955linear} and, owing to their usefulness, have been extensively studied over the last few decades~\cite{birge2011introduction}. The sample average approximation (SAA) method seems to have emerged as a popular approach to solving 2SLPs by constructing a solution estimator as follows: 
\begin{enumerate} 
\item[(i)] generate an implicit approximation of the objective function using a specified number of ``scenarios" $\xi_1,\xi_2,\ldots,\xi_n$ obtained, e.g., using Monte Carlo sampling; 
\item[(ii)] replace the 2SLP by a sample-path optimization problem~\cite{2014kimpashen,robinson1996analysis} having the objective function obtained in (i) and having the known constraint set $\mathcal{X}$, and solve it using one of a variety of decomposition approaches that have been proposed in the literature, e.g., \cite{vanAckooij_Oliveira_Song_2016,Oliveira_Sagastizabal_2014,wolf2014applying}.
\end{enumerate} 
SAA's popularity stems from its simplicity and its obvious utility within distributed settings,  where its structure lends to easy parallelization. Over the last two decades, SAA as described through (i) and (ii) has been extensively analyzed in settings that are much more general than just 2SLPs. For example, results on the consistency and rates of convergence of optimal values/solutions, large and small sample properties, and other special properties are now available through standard textbooks~\cite{shapiro2014lectures} and surveys~ \cite{2014hombay,2014kimpashen}. 

It is important to note that SAA is a paradigm and not an algorithm in that important components within the SAA framework still need to be chosen before implementation can occur. To implement the SAA paradigm as stated in (i) and (ii), a practitioner needs to select a sample size and a Monte Carlo generation mechanism in (i), and an appropriate solver/stopping-mechanism in (ii). For instance, the question of sample size choice for generating the sample-path problem in (i) has sometimes been a vexing issue, with practitioners often making this choice through trial and error, using minimum sample size bounds that have been noted to be conservative~\cite{2014kimpashen,2008lueahm,2003russha}, and more recently, using multiple sample sizes and solving multiple sample-path problems. 

A premise of this paper is that SAA's effective implementation depends crucially on the disciplined customization (to narrowly defined problem classes, e.g., 2SLPs) of choices internal to SAA. Such customization involves answering specific algorithmic questions that arise during implementation. For instance: \begin{enumerate} \item[(a)] Is it best to generate and solve (to machine precision) a single sample-path problem with a large Monte Carlo sample size or is it better to progressively and roughly solve a sequence of sample-path problems generated with increasing sample size? If the latter strategy is better, what schedule of sample sizes should be used?  \item[(b)] Recognizing that any generated sample-path problem suffers from sampling error and hence suggests not solving to machine precision, to what extent should a sample-path problem be solved? \item[(c)] What type of solvers should be used in solving the generated sample-path problems, given that the solution information to previously solved sample-path problem(s) can be fruitfully used as a \emph{warm start} to a subsequent sample-path problem? \end{enumerate}
In this paper, we rigorously investigate questions (a)--(c) for the specific case of 2SLPs. And, consistent with our earlier comments, our answers to (a)--(c) seem to be vital to attaining the encouraging numerical experience we describe in Section~\ref{sec:numerical}.  

\subsection{Summary and Insight on Main Results} 
The essence of our proposed framework is the construction of a sequential SAA framework for solving 2SLPs, where a sequence of approximate 2SLPs are generated and solved to progressively increasing precision across iterations. The framework is such that the early iterates are obtained with little computational burden since, by design, the generated sample-path problems tend to have small sample sizes and are solved imprecisely; and the later iterates can be expected to be obtained with ease as well since they tend to benefit from the warm starts using solution information obtained in previous iterations. The schedule of sample sizes and the adaptive optimality-tolerance parameters are chosen to be in lock-step, ensuring that no particular sample-path problem is ``over-solved." The framework we provide is an algorithm in the strict sense of the word in that we make specific recommendations for choosing: (i) the schedule of sample sizes to generate the sample-path problems to approximate the 2SLP, (ii) the schedule of error-tolerance parameters to which each of the generated sample-path problems is to be solved, and (iii) the solver to use when solving the sample-path problems. We also demonstrate that our framework can exploit existing results on finite-time stopping to provide solutions with probabilistic guarantees on optimality. Our extensive numerical experience on solving large-scale 2SLPs suggests that the proposed algorithm yields competitive computational performance compared with existing methods. 

We present a number of results that form the theoretical basis for the proposed algorithm. We present sufficient conditions under which the optimality gap and the distance (from the true solution set) of the algorithm's stochastic iterates converges to zero almost surely and in expectation. We also derive the corresponding iteration complexity and work complexity rates, that is, we provide upper bounds (in expectation) on the number of iterations and the number of Monte Carlo oracle calls to ensure that the solution resulting from the framework is $\epsilon$-optimal. The derived work complexity leads to an optimal sample size schedule which is shown to achieve the fastest possible convergence rate in a Monte Carlo setting.  Lastly, we demonstrate that using sample size schedules that deviate from the proposed schedule will lead to inferior convergence rates.   

We emphasize that the framework we propose is general in that it allows for the use of a wide range of dependent sampling, e.g., Latin-hypercube sampling (LHS)~\cite{mckay1979comparison}, antithetic variates~\cite{nelson1988antithetic}, and randomized quasi-Monte Carlo~\cite{glasserman2013monte,2016lec} \emph{within} a generated sample-path problem, and the reuse of scenarios \emph{across} generated sample-path problems. While we do not attempt to demonstrate that the use of such variance reduction measures is better than iid sampling, other reports~\cite{chen2015scenario,stockbridge2016variance} in the literature suggest the fruitfulness of such variance reduction techniques.

\subsection{Related Literature}

\exclude{2SLPs have been the subject of investigation for a long time~\cite{1995bir} and algorithms to solve 2SLPs can be conveniently classified based on whether or not they can treat the context $|\Xi| = \infty$. As noted in~\cite{2001zha}, the context $|\Xi| < \infty$ has generated an enormous amount of work resulting in various algorithm classes that directly exploit the finite sum structure stemming from assuming $|\Xi| < \infty$ --- see ~\cite{1995bir} and~\cite{2011chemeh} for entry points into this substantial literature. }

2SLPs have been the subject of investigation for a long time~\cite{1995bir} and algorithms to solve 2SLPs can be conveniently classified based on whether or not the probability space underlying the 2SLP is endowed with a sample space having a finite number of outcomes. As noted in~\cite{2001zha}, an enormous amount of work has been generated especially for the context where the sample space is finite, resulting in various algorithm classes that directly exploit the finite sum structure  --- see ~\cite{1995bir} and~\cite{2011chemeh} for entry points into this substantial literature. 

\exclude{When $|\Xi| = \infty$, or for that matter when $|\Xi|$ is ``very large,"} 
For 2SLPs with sample spaces having countably infinite or an uncountable number of outcomes, or for that matter even sample spaces with large cardinality, Monte Carlo sampling approaches appear to be a viable alternative~\cite{shapiro2014lectures,shapiro1998simulation,2000shahom}. In fact, sequential Monte Carlo sampling methods such as what we propose here are not new and have appeared in the stochastic programming (SP) and simulation optimization (SO) literature for several decades now~\cite{pasupathy2010choosing,2009denfer,ermoliev1992stochastic,higle1991stochastic,homem2003variable,2014hombay,shapiro1998simulation,2013wanpassch}. For instance,~\cite{ermoliev1992stochastic} proposes the stochastic quasi-gradient methods for optimization of discrete event systems,~\cite{shapiro1998simulation} suggests the idea of solving a sequence of sample-path problems with increasing sample sizes as a practical matter, and \cite{homem2003variable} gives various sufficient conditions on how fast the sample size should grow in order to ensure the consistency of the SAA estimator with varying sample sizes. For SPs where the corresponding sample-path problems are smooth optimization problems, \cite{polak2008efficient, royset2013sample} study the sample size selection problem for the sequential sampling procedure. They model the sequential sampling procedure as a stochastic adaptive control problem, by finding the optimal sample size \emph{as well as} the number of iterations that one should apply to solve the sampled problems, so that the total expected computational effort expended in the entire procedure is minimized. A surrogate model is then proposed to approximate this adaptive control model so that the sample size and the number of iterations to be employed at each iteration can be found (relatively) easily according to results from previous iterations, by solving the surrogate model. From an algorithmic perspective, the stochastic decomposition framework initially developed by~\cite{higle1991stochastic} is perhaps the most well-known practical approach that exploits the connections between statistical inference, sampling, and stochastic LPs. In addition, \cite{glynn2013simulation} proposes simulation-based Benders decomposition approach as a variant of the stochastic sub-gradient method specifically for 2SLPs and develops statistical confidence bounds for the optimal values. 

Similar to~\cite{homem2003variable}, \cite{pasupathy2010choosing,2009passch,2006pas} suggest  \emph{retrospective approximation} (RA) where a smooth stochastic optimization problem is solved through a sequence of sample-path problems generated with increasing sample sizes. Unlike in~\cite{homem2003variable}, RA methods solve the sample-path problems imprecisely, until a generally specified error-tolerance parameter is satisfied. The methods presented here can be thought to be \emph{adaptive} RA in that the  error-tolerance sequence in our current framework is adaptive since it depends explicitly on a measure of sampling variability. We find that such adaptivity is crucial for good numerical performance, although it brings additional technical difficulty due to the need to handle stopping time random variables. Also, whereas the methods in~\cite{pasupathy2010choosing,polak2008efficient, royset2013sample} do not apply to non-smooth problems such as 2SLPs, the methods we present here are tailored (through the choice of solver) to exploit the structure inherent to 2SLPs. We note in passing that adaptive sampling as a strategy to enhance efficiency of stochastic optimization algorithms has recently gained popularity --- see, for example,~\cite{bollapragada2018adaptive,bottou2018optimization,2014hasghopas,2018pasglyetal,2018sarhasetal}. 

There has also been some recent work on the question of assessing solution quality in general SPs that directly applies to the context we consider here. For example, \cite{bayraksan2011sequential,bayraksan2012fixed} propose sequential sampling methods and study conditions under which their employed optimality gap estimator is asymptotically valid in the sense of lying in a returned confidence interval with a specified probability guarantee. Applying these conditions when stipulating the sample size to be employed in each iteration, one naturally gets a highly reliable stopping criterion for the sequential sampling procedure. As we will demonstrate, the results from \cite{bayraksan2011sequential,bayraksan2012fixed} can be modified for application within a finite-time version of the proposed framework, notwithstanding the fact that the generated sample-path problems in the proposed framework need only be solved imprecisely, to within a specified error-tolerance parameter.

\subsection{Organization of the Paper}
The rest of the paper is organized as follows: Section~\ref{sec:pbstmt} presents important notation, convention, and terminology used throughout the paper, a precise problem statement of 2SLP, and a listing of key assumptions. Section~\ref{sec:ASS} introduces the proposed adaptive sequential SAA framework. Section~\ref{sec:inexact} presents various results pertaining to consistency, work complexity rates, and optimal sample size schedules. Section~\ref{sec:stopping} provides a finite stopping rule for the adaptive sequential SAA algorithm by incorporating the sequential sampling approaches proposed in \cite{bayraksan2011sequential} and~\cite{bayraksan2012fixed}. Section~\ref{sec:numerical} shows computational performance of the proposed adaptive sequential SAA framework on a variety of test instances.



\section{PROBLEM SETUP}\label{sec:pbstmt}

The 2SLP is formally stated as follows: 

\begin{align*}
 \min \ & c^\top x + q(x) \tag{$P$} \\
\text{s.t. } & x\in \mathcal{X} :=  \left\{x\in \mathbb{R}^{n_1}_+ \mid Ax = b\right\},
\end{align*}
where the $r_1 \times n_1$ matrix $A$, $r_1 \times 1$ vector $b$ and $n_1 \times 1$ vector $c$ are assumed to be fixed and known. The second-stage \emph{value function} $q(x)$ is defined as:
\begin{equation}\label{secondstagevalue}
q(x) = \mathbb{E}[Q(x,\xi)] = \int_{\Xi} Q(x,\xi) \, dP(\xi),
\end{equation} 
where for each $\xi \in \Xi$, the second-stage objective value 
\begin{equation}\label{subprob}
Q(x,\xi) = \min_{y\in \mathbb{R}^{n_2}_+} \ \left\{d(\xi)^\top y \mid W(\xi)y \geq h(\xi) - T(\xi)x\right\}.
\end{equation} 
We assume that the second-stage objective value is finite, i.e., $Q(x,\xi) > -\infty, \ \forall x\in \mathcal{X}$, and $\xi\in \Xi$. Notice that the function $q(\cdot)$ is not directly ``observable" but can be estimated pointwise by ``generating scenarios." Specifically, we assume that an iterative algorithm, during the $\ell$-th iteration, generates scenarios $\xi^{\ell}_1,\xi^{\ell}_2,\ldots,\xi^{\ell}_{m_{\ell}} \in \Xi$ that are identically distributed according to some probability measure. The resulting ``sample-path problem" due to scenarios $\xi^{\ell}_1,\xi^{\ell}_2,\ldots,\xi^{\ell}_{m_{\ell}} \in \Xi$ is given by 
\label{master-unregsubsample} 
 \begin{align*}
\min \ & c^\top x + Q^{\ell}_{m_\ell}(x) \tag{$P_{\ell}$} \\
\text{s.t. } & x\in \mathcal{X} :=  \left\{x\in \mathbb{R}^{n_1}_+ \mid Ax = b\right\},\end{align*}
where the second-stage \emph{sample-path value function} $Q^{\ell}_{m_{\ell}}(x) := m_{\ell}^{-1} \sum_{i=1}^{m_\ell} Q(x,\xi^{\ell}_i)$, and $Q(x,\xi^{\ell}_i)$ is given through \eqref{subprob}. 

To accommodate the probabilistic analysis of the \emph{adaptive iterative} algorithms we propose, we assume the existence of a filtered probability space $(\Omega, \mathfrak{F}, (\mathfrak{F}_\ell)_{\ell\geq 1},\mathbb{P})$ such that the iterates $(\hat{x}^{\ell})_{\ell \geq 1}$ generated by the algorithm we propose are adapted to $(\mathfrak{F}_\ell)_{\ell\geq 1}$. We note then that $Q^{\ell}_{m_\ell}(\cdot)$ denotes an $\mathfrak{F}_{\ell}$-measurable function estimator of $q(\cdot)$ constructed from $\xi^{\ell}_i, i=1,2,\ldots, m_\ell$ identically distributed, $\mathfrak{F}_{\ell}$-measurable random objects. The random objects $\xi^{\ell}_i, i=1,2,\ldots, m_\ell; \ell = 1,2,\ldots$ correspond to what have been called ``scenarios" in the SP literature. We will use $\xi^{\ell}$ to denote a generic $\mathcal{F}_{\ell}$-measurable outcome, and $\xi^{\ell}_1, \xi^{\ell}_2, \ldots$ to denote $\mathcal{F}_{\ell}$-measurable outcomes obtained from Monte Carlo sampling during iteration $\ell$. Thus, the problem in ($P_{\ell}$) is a ``sample-path approximation" of the problem in ($P$) and the function $Q^{\ell}_{m_\ell}(\cdot)$ is a ``sample-path approximation" of the function $q(\cdot)$. The precise sense in which the function $Q^{\ell}_{m_\ell}(\cdot)$ approximates $q(\cdot)$ will become clear when we state the standing assumptions in Section~\ref{sec:assumptions}.

The notation we use (with the superscript and subscript), while cumbersome, is needed to reflect the fact that the framework we propose allows for a variety of dependence structures of $\xi^{\ell}_i, i=1,2,\ldots, m_\ell$ within and across iterations $\ell =1,2,\ldots.$ For example, in the simplest and most prevalent case of independent and identically distributed (iid) sampling, generation is done so that the random objects $\xi^{\ell}_i,  i = 1,2,\ldots, m_\ell$ are mutually independent and identically distributed for each $\ell$; the objects $\xi^{\ell}_i, i = 1,2,\ldots, m_\ell$ can also be generated so as to satisfy chosen dependency structures that reduce variance, e.g., LHS~\cite{mckay1979comparison}, antithetic variates~\cite{2013nel}, and randomized quasi-Monte Carlo~\cite{2008leclectuf,glasserman2013monte}. Similarly, across iterations $\ell = 1,2,\ldots$, one can arrange for scenarios from previous iterations to be reused in subsequent iterations as in \emph{common random numbers}~\cite{2013nel}. Indeed, we will have to make certain assumptions on $Q^{\ell}_{m_\ell}(\cdot), \ell=1,2,\ldots$ in Section~\ref{sec:assumptions} that will implicitly impose restrictions on the nature of sampling, to ensure that $Q^{\ell}_{m_\ell}(\cdot)$ approximates $q(\cdot)$ well enough.

\subsection{Further Notation and Convention}\label{sec:notation} 

We let $\mathcal{S}^*$ denote the optimal solution set, $z^*$ the optimal value, and $\mathcal{S}^*(\epsilon) := \{x \in \mathcal{X}: c^\top x + q(x) - z^* \leq \epsilon\}$ the $\epsilon$-optimal solution set of problem $(P)$. Analogously, $\mathcal{S}_{m_{\ell}}^*$ denotes the optimal solution set, $z_{m_{\ell}}^*$ the optimal value, and $\mathcal{S}_{m_{\ell}}^*(\epsilon) := \{x \in \mathcal{X}: c^\top x + Q^{\ell}_{m_\ell}(x) - z_{m_{\ell}}^* \leq \epsilon\}$ the $\epsilon$-optimal solution set for problem $(P_{\ell})$. 

The following definitions are used extensively throughout the paper. (i) $\mathbb{R}_+$ denotes the set of non-negative real numbers.  (ii) For $x = (x_1,x_2,\ldots,x_n) \in \mathbb{R}^n$, $\|x\|_2$ refers to the Euclidean norm $\|x\|_2 = \sqrt{x_1^2 + x_2^2 + \cdots + x_n^2}$. (iii) For a real-valued continuous function $g: \mathcal{X} \to \mathbb{R}$ defined on the compact set $\mathcal{X}$, the sup-norm $\|g\|$ is defined as $\|g\|:= \max_{x \in \mathcal{X}} |g(x)|$. (iv) The distance between a point $x\in \mathbb{R}^n$ and a set $X\subseteq \mathbb{R}^n$ is defined as $\mbox{dist}(x,{X}) := \inf\{\|x-z\|_2: z\in {X}\}$, and the distance between two sets ${X},{Y}\subseteq \mathbb{R}^n$ is defined as $\mbox{dist}({X},{Y}) := \sup_{x\in {X}}\{\mbox{dist}(x,{Y})\}$. The definition we have used for $\mbox{dist}(\cdot,\cdot)$ suffices for our purposes even though it is not a metric since $\mbox{dist}({X},{Y}) \neq \mbox{dist}({Y},{X})$ in general. (v) The diameter $\mbox{diam}(X)$ of a set $X \subseteq \mathbb{R}^n$ is defined as $\mbox{diam}(X) : = \sup_{x,y \in X}\{\|x-y\|_2\}$. (vi) The projection of a point $x\in \mathbb{R}^n$ onto a set ${X}\subseteq \mathbb{R}^n$ is defined as $\mbox{proj}\left(x,{X}\right) := \arg\inf_{z\in {X}}\{\|x-z\|_2\}$. (vii) $|{X}|$ denotes the cardinality of set ${X}$. (viii) For a sequence of $\mathbb{R}^d$-valued random variables $\{Z_n\}, Z$, we say $Z_n \to Z \mbox{ a.s.}$ to mean that $\{Z_n\}$
converges to $Z$ almost surely, that is, with probability one. We say that $Z_n$ converges to $Z$ in $L^2$-norm if $\mathbb{E}[\|Z_n\|_2] \to \mathbb{E}[\|Z\|_2]$ as $n \to \infty$. (See~\cite{1995bil} for modes of convergence of sequences of random variables.) 

\subsection{Assumptions}\label{sec:assumptions}

The following is a list of assumptions that we will use to prove various results in the paper. Assumption~\ref{ass:boundedfeasible} and Assumption~\ref{ass:varbd} are standing assumptions in that we will assume these to hold always. Assumption~\ref{ass:fungrowth} will be invoked as and when needed. 


\begin{assumption}[Condition on Relatively Complete Recourse]\label{ass:boundedfeasible} The first-stage feasible region $\mathcal{X}$ of problem $(P)$ is compact; furthermore, Problem $(P)$ has relatively complete recourse, that is, 
\[
\mathbb{P}\left\{\left(y\in \mathbb{R}^{n_2}_+ : W(\xi)\,y \geq h(\xi) - T(\xi)\,x  \right) = \emptyset \right\} = 0, \ \forall x \in \mathcal{X}.
\]
\end{assumption} 

\begin{assumption} [Condition on Estimator Quality] \label{ass:varbd} The individual observations comprising the Monte Carlo estimator have finite variance, that is, for all $\ell \geq 1$, \begin{equation}\label{finvar} \sup_{x \in \mathcal{X}} \mathbb{V}\mbox{ar}(Q(x,\xi^{\ell}) \, \vert \, \mathcal{F}_{\ell-1}) < \infty \emph{\mbox{ a.s.}} \end{equation} Moreover, the Monte Carlo estimator error decays at the canonical Monte Carlo rate, that is, there exists a constant $\kappa_0 < \infty$ such that for all $\ell \geq 1$,  \begin{equation}\label{msebd} \mathbb{E}\left[\|\bar{\epsilon}_m\|^2\, \vert \, \mathcal{F}_{\ell -1} \right] \leq \frac{\kappa_0}{m} \emph{\mbox{ a.s.}},\end{equation} where the sample-mean error function $\bar{\epsilon}_m(x) : = Q_m^{\ell}(x) - q(x) = m^{-1} \sum_{j=1}^m (Q(x,\xi_j^{\ell}) - q(x)).$ (The $\|\cdot\|$ appearing in \eqref{msebd} is the sup-norm defined in Section~\ref{sec:notation}).
\end{assumption}

\begin{assumption}[Condition on Growth Rate of Objective Function]\label{ass:fungrowth} The (true) objective function exhibits $\gamma_0$-first-order growth on $\mathcal{X}$, that is, $$\gamma_0 := \sup_{s}\{s: c^\top x + q(x) - z^* \geq s \,\emph{\mbox{dist}}(x,\s{S}) \quad \forall x \in \mathcal{X}\} > 0.$$ 

\end{assumption}


Some form of regularity such as \eqref{finvar} in Assumption~\ref{ass:varbd} is routinely made in the SP literature~\cite{bayraksan2006assessing} and is generally easy to satisfy in 2SLPs when the feasible region $\mathcal{X}$ is compact. 

The condition (\ref{msebd}) in Assumption~\ref{ass:varbd} has been stated for generality, to subsume many contexts that involve dependent and biased sampling, and needs justification. To get a clear sense of the conditions under which \eqref{msebd} in Assumption~\ref{ass:varbd} holds, let's first observe that in the \emph{iid unbiased} context, that is, when $\xi_j^{\ell}, j=1,2,\ldots$ are iid and $\mathbb{E}[Q(x,\xi_j^{\ell}) - q(x) \, \vert \, \mathcal{F}_{\ell-1}] = 0 \mbox{ a.s.}$, the vast body of recent literature on concentration inequalities~\cite{2007mas,2013boulugmas,2011ledtal,1994tal} guarantees that (\ref{msebd}) holds under a variety of moment conditions on $Q(x,\xi^{\ell})$. For a general result that can be directly applied in the iid unbiased context, see~\cite[Proposition 3.1]{2010dumetal} established for Banach spaces. (Much of the literature on concentration inequalities is focused on sharp quantifications of the tail probabilities associated with $\bar{\epsilon}_m$, and thus characterize the constant $\kappa_0$ indirectly; our proposed algorithms do not rely on knowing $\kappa_0$.)

In the \emph{dependent but unbiased} sampling context, that is, when $\mathbb{E}[Q(x,\xi_j^{\ell}) - q(x) \, \vert \, \mathcal{F}_{\ell-1}] = 0 \mbox{ a.s.}$ but $\xi_j^{\ell}, j=1,2,\ldots$ are not necessarily independent, Assumption~\ref{ass:varbd} holds in many popular settings where the estimator can be written as an alternate sum of iid unbiased random variables at each $x \in \mathcal{X}$. For instance, consider using \emph{antithetic variates}~\cite{2013nel}, where for even $m$ we set $\xi_j^{\ell} := U_j ^{\ell}\in (0,1), \xi_{j+1}^{\ell} = 1-U_j^{\ell}, j=1,3,5,\ldots,m-1$. Then, $Q_m^{\ell}(x)$ can be written as the sample mean of $m/2$ (ignoring non-integrality) iid unbiased random variables, each of which is the sum of the two dependent random variables $Q(x,U_j^{\ell})$ and $Q(x,1-U_j^{\ell})$, implying that Assumption~\ref{ass:varbd} again holds. Similarly, if one chooses stratified sampling~\cite{glasserman2013monte} as a variance reduction technique, then $Q_m^{\ell}(x), x \in \mathcal{X}$ can be written as a finite convex combination of sample means, each of which is composed of iid random variables that are unbiased with respect to the conditional means.   

Assumption~\ref{ass:varbd} can be shown to hold in other dependent sampling settings such as LHS~\cite{mckay1979comparison} as well. To see this, we ``construct" a $d$-dimensional random variable $\xi^{\ell}_j := (\xi^{\ell}_{1j}, \xi^{\ell}_{2j}, \ldots,\xi^{\ell}_{dj}) \in [0,1)^d$ where $\xi^{\ell}_{ij} = m^{-1}(\pi_{ij} + U_{ij})$, $\pi_i = (\pi_{i1}, \pi_{i2}, \ldots, \pi_{im}), i=1,2,\ldots,d$ is each a uniform random permutation of $(0,1,2,\ldots,m-1)$, $U_{ij} \sim [0,1)$, and $U_{ij}$'s and $\pi_i$'s are independent. Under this setup, we see that $\xi^{\ell}_{ij} \sim U[0,1)$, $\xi^{\ell}_i \sim U[0,1)^d$, and that $Q_{m_{\ell}}^{\ell}(x)$ is an unbiased estimator of $q(x)$ that is constructed from dependent random variables. Furthermore, under this setup, and as shown in~\cite[p. 245]{mckay1979comparison} and~\cite[Section 10.3]{2013owe}, $\mbox{Var}(Q(x,\xi)) < \infty$ guarantees that $\mbox{Var}(Q^{\ell}_{m_{\ell}}(x)) = \sigma_0^2/m_{\ell} + o(m_{\ell}^{-1}) = O(m_{\ell}^{-1}),$ where $\sigma_0^2 = \mathbb{E}[(Q(x,\xi) - Q^{\scriptsize{\mbox{add}}}(x,\xi))^2]$ and $Q^{\scriptsize{\mbox{add}}}(x,\xi))$ is the additive approximation of $Q(x,\xi)$ obtained using ANOVA.  See also~\cite{1987ste} for large sample properties in the LHS context.

Randomized quasi-Monte Carlo (RQMC) is a broad class of variance reduction  methods that subsumes various dependent sampling techniques, and where arguments similar to what we have outlined for LHS apply when considering the variance of the estimator $Q^{\ell}_{m_{\ell}}$. See~\cite[Section 2]{2018lec}, and the specific RQMC methods listed there, to see how RQMC yields estimators having variance at least as small as what is obtained using naive Monte Carlo, thus guaranteeing $O(m_{\ell}^{-1})$ variance.

We recognize that we have limited all of the above discussion on dependent sampling by fixing $x \in \mathcal{X}$. A complete treatment of Assumption~\ref{ass:varbd} that involves dependence across $x \in \mathcal{X}$ will require us to consider the behavior of the random function $Q_{m_{\ell}}^{\ell}(\cdot)$ by directly making assumptions on the vector $(d(\xi), W(\xi), h(\xi),T(\xi))$ appearing in the second-stage problem~\eqref{subprob}. In general, some sort of a stipulation on the quality of the Monte Carlo estimator is needed to provide reasonable guarantees relating to convergence and convergence rates. For example, in Chapter 5 of~\cite{shapiro2014lectures}, we see that even for convergence of sample-path optimal values of SAA to the true optimal value, one needs uniform convergence (across $x \in \mathcal{X}$) of the sample-path functions. 

Finally, Assumption \ref{ass:fungrowth} is a standard regularity condition~\cite{shapiro2014lectures} having to do with the growth behavior of the true objective function. Specifically, Assumption~\ref{ass:fungrowth} imposes a  minimum growth condition on the true objective function $c^\top x + q(x)$.

\section{ADAPTIVE SEQUENTIAL SAA}\label{sec:ASS}
In this section, we present the proposed adaptive sequential SAA algorithm. The proposed algorithm is based on the following three high-level ideas. \begin{enumerate}\item[(1)] Instead of solving (to any given precision) a single sample-path problem that is generated with a large pre-specified sample size, solve (using a chosen Solver-$\mathcal{A}$) a \emph{sequence} of sample-path problems generated with increasing sample sizes according to a sample size schedule. \item[(2)] Use the solution information obtained from solving each sample-path problem as a \emph{warm start} for solving
the subsequent sample-path problem. \item[(3)] To ensure that no particular sample-path problem is over-solved, solve each generated sample-path problem only \emph{imprecisely} to within an optimality tolerance parameter that is adaptively chosen by explicitly considering the inherent sampling error resulting from the choice of
sample size.\end{enumerate} 

\begin{algorithm}
\caption{An adaptive sequential SAA framework.} 
\label{algo:adaptive-sequential-nonterminating}
\begin{algorithmic}[1]
\STATE{{\bf Input:} Solver-$\mathcal{A}$, a sampling scheme, constants $\nu, \sigma_{\scriptsize\mbox{min}}, \sigma_{\scriptsize\mbox{max}} \in (0,\infty)$, with $\sigma_{\scriptsize\mbox{min}} < \sigma_{\scriptsize\mbox{max}}$. \STATE Set $\ell\leftarrow 0$.}
 \FOR{$\outerit = 1,2,\cdots$}
        \STATE{Select sample size $m_{\ell}$ for outer iteration $\ell$ and draw a sample $\MM_{\ell} := \{\xi^{\ell}_1,\xi^{\ell}_2,\ldots,\xi^{\ell}_{m_{\ell}}\}$.}
 	\FOR{$\innerit = 1,2,\cdots$} 	
 		\STATE{Using Solver-$\mathcal{A}$ on $(P_{\ell})$, execute $\innerit$-th inner iteration. \STATE Obtain candidate solution $\hat{x}^{\ell,t}$, gap estimate $G^{\ell,\innerit}$ and variance parameter estimate $\hat{\sigma}_{\ell,\innerit}$.}
		\IF{$G^{\ell,\innerit} \leq \epsilon_{\ell,t} := \nu \, m_{\ell}^{-1/2}\, \mbox{proj}\left(\hat{\sigma}_{\ell,t},[\sigma_{\scriptsize\mbox{min}},\sigma_{\scriptsize\mbox{max}}]\right)$}
 		
 			\STATE{Break the inner loop with a candidate solution $\hat{x}^\ell := \hat{x}^{\ell,t}$.}
 		\ENDIF
 	\ENDFOR
 	\STATE{Set $\ell \leftarrow \ell+1$.}
\ENDFOR
\end{algorithmic}
\end{algorithm}

As can be seen through the listing for Algorithm~\ref{algo:adaptive-sequential-nonterminating}, the iterative framework maintains outer iterations that are indexed by $\ell$, each of which is composed of inner iterations indexed by $t$. During the $\ell$-th outer iteration, the $\ell$-th sample-path problem $(P_{\ell})$ with sample $\MM_\ell := \{\xi^{\ell}_1,\xi^{\ell}_2,\ldots,\xi^{\ell}_{m_{\ell}}\}$ is generated and solved inexactly up to precision $\epsilon_{\ell}$ using an iterative optimization algorithm (generically called Solver-$\mathcal{A}$) for non-smooth convex programs, e.g., the subgradient method~\cite{nedic2001convergence}, level bundle method~\cite{lemarechal1995new}. We will see later that any solver that satisfies a certain imposition on convergence rate can be used as Solver-$\mathcal{A}$. The iterations of Solver-$\mathcal{A}$ thus constitute the \emph{inner iterations} generating a sequence of inner solutions $\hat{x}^{\ell,\innerit}, t = 1,2,\ldots$

During each inner iteration $t$, an upper bound estimate $G^{\ell,\innerit}$ of the optimality gap associated with $\hat{x}^{\ell,\innerit}$ is readily available for any variant of cutting plane algorithms, where a lower approximation $\check{Q}^{\ell,t}_{m_\ell}(\cdot)$ to $Q^\ell_{m_\ell}(\cdot)$ is maintained and iteratively updated. Specifically, the objective value corresponding to $\hat{x}^{\ell,\innerit}$, $\bar{z}^\ell_t := c^\top \hat{x}^{\ell,\innerit} + Q^\ell_{m_\ell}(\hat{x}^{\ell,\innerit})$, gives an upper bound for $z^*_{m_\ell}$.  The true optimality gap associated with $\hat{x}^{\ell,\innerit}$, $\bar{z}^\ell_t - z^*_{m_\ell}$, can then be overestimated if a lower bound $\underline{z}^\ell_t$ for $z^*_{m_\ell}$ is provided. Such a lower bound $\underline{z}^\ell_t$ can be obtained, e.g., by solving $\underline{z}^\ell_t = \min_{x\in \mathcal{X}}\left\{c^\top x + \check{Q}^{\ell,t}_{m_\ell}(x)\right\}$. This optimality gap estimate, $G^{\ell, \innerit} := \bar{z}^\ell_t - \underline{z}^\ell_t$, is then compared against an estimate of the sampling error of the true solution of the $\ell$-th sample-path problem calculated using $\hat{x}^{\ell,\innerit}$.  Precisely, the inner iterations terminate when 
\begin{equation}\label{innerterm}
G^{\ell,\innerit} < \epsilon_{\ell,t} := \nu \, m_{\ell}^{-1/2}\, \mbox{proj}\left(\hat{\sigma}_{\ell,t},[\sigmamin,\sigmamax]\right),
\end{equation}
where $\sigmamin, \sigmamax, \nu > 0$ are chosen constant parameters, and, as usual, the sample variance
\begin{equation}\label{sehat1}
\hat{\sigma}^2_{\ell,\innerit}:= \frac{1}{m_{\ell}} \sum_{i=1}^{m_{\ell}}\left[Q(\hat{x}^{\ell,\innerit},\xi^{\ell}_i)-Q^\ell_{m_\ell}(\hat{x}^{\ell,\innerit})\right]^2.
\end{equation} We informally call $\epsilon_{\ell,t}$ appearing in (\ref{innerterm}) the \emph{error tolerance}; notice that the condition in (\ref{innerterm}) is meant to keep the estimate of the solution error (as measured by the optimality gap $G^{\ell,\innerit}$) in balance with the sampling error, as  measured by the error tolerance $\epsilon_{\ell,t}$. The constants $\sigmamin, \sigmamax$ appearing in \eqref{innerterm} have been introduced for practical purposes only, to hedge against the rare event that we generate scenarios resulting in an extremely large or extremely small value of the sample variance. Thus:
\begin{enumerate}
\item[--] if $G^{\ell,\innerit} \geq \epsilon_{\ell,t}$, that is, the upper bound estimate of the optimality gap for solving the current sample-path problem is no less than a factor of the sampling error estimate, continue to the next \emph{inner iteration} $\innerit+1$;
\item[--] otherwise, stop solving the current sample-path problem, that is, terminate the inner iterations, define $\epsilon_\ell := \epsilon_{\ell, t}$, obtain a new scenario set $\MM_{\ell+1} := \{\xi^{\ell+1}_1,\xi^{\ell+1}_2,\ldots,\xi^{\ell+1}_{m_{\ell+1}}\}$ with sample size $m_{\ell+1}$ and continue to the next \emph{outer iteration} $\ell+1$.
\end{enumerate} When the inner termination condition (\ref{innerterm}) is achieved, we stop the inner iterations, record the solution $\hat{x}^{\ell,\innerit}$ at termination as the current candidate solution $\hat{x}^\ell$, obtain a new scenario set $\MM_{\ell+1}$ and start a new outer iteration $\ell+1$ with $\hat{x}^\ell$ as the initial candidate solution. Additional information such as the optimal dual multipliers collected up to outer iteration $\ell$ can also be used to warm start the outer iteration $\ell+1$. The process is then repeated until a stopping criterion for the outer iteration of Algorithm~\ref{algo:adaptive-sequential-nonterminating} is satisfied by the candidate solution $\hat{x}^\ell$. We defer our specification of the outer stopping criterion to Section~\ref{sec:stopping}.

Algorithm~\ref{algo:adaptive-sequential-nonterminating} is \emph{adaptive} in that $\epsilon_\ell$ is not pre-specified --- it is a function of scenarios $\MM_{\ell} := \{\xi^{\ell}_1,\xi^{\ell}_2,\ldots,\xi^{\ell}_{m_{\ell}}\}$ used in the $\ell$-th outer iteration. Adaptivity is crucial for practical efficiency and when incorporated in our way, avoids several mathematical complexities that otherwise manifest.

We end this section with a result that quantifies the quality of estimators used within Algorthm~\ref{algo:adaptive-sequential-nonterminating}. Specifically, Theorem~\ref{thm:mcestimator} quantifies the quality of $Q^{\ell}_{m_{\ell}}(\cdot)$ as an estimator of $q(\cdot)$.  

\begin{theorem}[Monte Carlo Estimator Quality] \label{thm:mcestimator} Suppose Assumption~\ref{ass:boundedfeasible} and Assumption~\ref{ass:varbd} hold, and the sequence of sample sizes $(m_{\ell})_{\ell \geq 1}$ is chosen so that the following condition holds: 
\begin{equation}\label{sampsizeloginc} 
\sum_{\ell =1}^{\infty} \frac{1}{\sqrt{m_{\ell}}} < \infty, \,\, m_{\ell} \geq 1. \tag{SS-A}
\end{equation}  
Then $\sup_{x \in \mathcal{X}} |Q^{\ell}_{m_{\ell}}(x) - q(x)| = 0 \emph{\mbox{ a.s.}} \emph{\mbox{ as }} \ell \to \infty.$ 
\end{theorem}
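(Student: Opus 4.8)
The plan is to control the expected sup-norm error $\mathbb{E}[\|\bar\epsilon_{m_\ell}\|]$ and show that the resulting series over outer iterations is summable, which forces the errors to vanish along the entire sequence almost surely. Recall from Assumption~\ref{ass:varbd} that $\bar\epsilon_{m_\ell}(x) = Q^\ell_{m_\ell}(x) - q(x)$, so that $\sup_{x\in\mathcal{X}}|Q^\ell_{m_\ell}(x)-q(x)| = \|\bar\epsilon_{m_\ell}\|$ in the sup-norm of Section~\ref{sec:notation}. First I would record that this quantity is a bona fide finite, measurable random variable: under Assumption~\ref{ass:boundedfeasible} the set $\mathcal{X}$ is compact and problem $(P)$ has relatively complete recourse, so each LP value function $Q(\cdot,\xi)$, and hence both $Q^\ell_{m_\ell}(\cdot)$ and $q(\cdot)$, is finite-valued, convex, and continuous on $\mathcal{X}$; the maximum defining the sup-norm is therefore attained. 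This is in any case implicit in Assumption~\ref{ass:varbd}, which already assigns a finite conditional second moment to $\|\bar\epsilon_{m}\|$ via \eqref{msebd}.

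The core estimate comes from passing from the second moment to the first moment. By the conditional Jensen (equivalently Cauchy--Schwarz) inequality together with \eqref{msebd},
\begin{equation*}
\mathbb{E}\big[\|\bar\epsilon_{m_\ell}\| \,\big|\, \mathcal{F}_{\ell-1}\big] \le \big(\mathbb{E}\big[\|\bar\epsilon_{m_\ell}\|^2 \,\big|\, \mathcal{F}_{\ell-1}\big]\big)^{1/2} \le \sqrt{\frac{\kappa_0}{m_\ell}} \quad \mbox{a.s.}
\end{equation*}
Taking total expectations through the tower property yields $\mathbb{E}[\|\bar\epsilon_{m_\ell}\|] \le \sqrt{\kappa_0}\,m_\ell^{-1/2}$ for every $\ell \ge 1$.

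Next I would sum over outer iterations and invoke the sample-size schedule. Since the summands are non-negative, Tonelli's theorem together with \eqref{sampsizeloginc} gives
\begin{equation*}
\mathbb{E}\Big[\sum_{\ell=1}^{\infty} \|\bar\epsilon_{m_\ell}\|\Big] = \sum_{\ell=1}^{\infty}\mathbb{E}\big[\|\bar\epsilon_{m_\ell}\|\big] \le \sqrt{\kappa_0}\sum_{\ell=1}^{\infty} \frac{1}{\sqrt{m_\ell}} < \infty.
\end{equation*}
A non-negative random variable with finite expectation is almost surely finite, so $\sum_{\ell\ge 1}\|\bar\epsilon_{m_\ell}\| < \infty$ a.s.; in particular the general term of a convergent series tends to zero, which is exactly $\sup_{x\in\mathcal{X}}|Q^\ell_{m_\ell}(x)-q(x)| \to 0$ a.s. as $\ell\to\infty$.

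I expect the only genuine subtlety --- and hence the step deserving the most care --- to be the first one: justifying that $\|\bar\epsilon_{m_\ell}\|$ is a well-defined, measurable, square-integrable random variable to which \eqref{msebd} applies, i.e., that the \emph{uniform} (over $x\in\mathcal{X}$) deviation rather than merely the pointwise deviation is controlled. This is precisely where Assumption~\ref{ass:varbd} carries the weight: it is phrased in the sup-norm so that the canonical $O(1/m)$ decay holds uniformly in $x$, letting us bypass a from-scratch proof of uniform convergence. Everything afterward is a routine Jensen/Tonelli argument. I would also note in passing that a second-moment Markov plus Borel--Cantelli route works equally well, since $m_\ell \ge 1$ forces $\sum_\ell m_\ell^{-1} \le \sum_\ell m_\ell^{-1/2} < \infty$; but the first-moment summation above is cleaner and uses condition \eqref{sampsizeloginc} in the exact form stated.
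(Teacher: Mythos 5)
Your proof is correct, but it takes a genuinely different route from the paper's, so a comparison is worthwhile. Both arguments share the same opening move --- conditional Jensen applied to \eqref{msebd} to pass from the second moment of the sup-norm error to its first moment --- but they diverge afterward. The paper bounds the conditional tail probability $\mathbb{P}\{\sup_{x\in\mathcal{X}}|Q^{\ell}_{m_{\ell}}(x)-q(x)|>t \,\vert\, \mathcal{F}_{\ell-1}\}$ via Markov's inequality and then invokes the filtered (conditional) version of the Borel--Cantelli lemma together with \eqref{sampsizeloginc} to conclude that, for each $t>0$, the exceedance events occur only finitely often almost surely. You instead sum unconditional expectations: by the tower property and Tonelli, $\mathbb{E}\left[\sum_{\ell}\|\bar{\epsilon}_{m_{\ell}}\|\right] \leq \sqrt{\kappa_0}\sum_{\ell} m_{\ell}^{-1/2} < \infty$, so $\sum_{\ell}\|\bar{\epsilon}_{m_{\ell}}\|$ is finite a.s. and its terms vanish. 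Your route is more elementary (no Borel--Cantelli at all) and yields a strictly stronger conclusion --- almost-sure \emph{summability} of the sup-norm errors, not merely their convergence to zero --- and it is in fact the very estimate the paper re-derives inside the proof of Theorem~\ref{lem:distbasicineq} to control $\sum_{\ell}\delta_{\ell}$. What the paper's conditional Borel--Cantelli route buys in exchange is robustness: it needs only that the conditional bounds sum to a finite value along almost every sample path, so it survives a sample-size schedule $(m_{\ell})_{\ell\geq 1}$ that is random and $\mathcal{F}_{\ell-1}$-measurable (i.e., chosen adaptively by the algorithm) with \eqref{sampsizeloginc} holding pathwise; in that setting your unconditional Tonelli step would need an extra uniform-integrability or deterministic-domination argument to exchange expectation and infinite sum. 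For the theorem as stated, with a deterministic schedule satisfying \eqref{sampsizeloginc}, both proofs are complete and correct.
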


\begin{proof} We can write for $t>0$, a.s.,
\begin{align}\label{concjen}
\mathbb{P}\left\{\sup_{x \in \mathcal{X}} |Q^{\ell}_{m_{\ell}}(x) - q(x)|>t \, \vert \, \mathcal{F}_{\ell-1} \right\} & \leq t^{-1}\mathbb{E}\left[\sup_{x \in \mathcal{X}} |Q^{\ell}_{m_{\ell}}(x) - q(x)| \, \vert \, \mathcal{F}_{\ell-1} \right] \nonumber \\ 
&= t^{-1}\,\mathbb{E}\left[\left(\sup_{x \in \mathcal{X}} |Q^{\ell}_{m_{\ell}}(x) - q(x)|^2 \right)^{1/2}\, \vert \, \mathcal{F}_{\ell-1} \right] \nonumber \\
& \leq t^{-1}\,\left(\mathbb{E}\left[(\sup_{x \in \mathcal{X}} |Q^{\ell}_{m_{\ell}}(x) - q(x)|^2 \, \vert \, \mathcal{F}_{\ell-1} \right]\right)^{1/2}  \leq \frac{\sqrt{\kappa_0}\,t^{-1}}{\sqrt{m_{\ell}}},
\end{align} 
where the first line in (\ref{concjen}) is from Markov's inequality~\cite{1995bil}, the third from (concave) Jensen's inequality~\cite{1995bil}, and the last from Assumption~\ref{ass:varbd}. Conclude from \eqref{concjen}, the assumed bound \eqref{sampsizeloginc}, and the filtered version of the Borel-Cantelli lemma~\cite{1991wil}, that the assertion of the theorem holds.


\end{proof}

We note that the condition in \eqref{sampsizeloginc} is weak --- any sequence $(m_{\ell})_{\ell \geq 1}$ that satisfies $m_{\ell} \geq \ell^{2+\epsilon}$ for large enough $\ell$ and some $\epsilon>0$ satisfies the condition in \eqref{sampsizeloginc}. The utility of Theorem~\ref{thm:mcestimator} is that it connects uniform almost sure convergence of the Monte Carlo estimator with the moment assumption specified through Assumption~\ref{ass:varbd}. See~\cite{homem2003variable} for analogous results for pointwise convergence. We are now ready to undertake the consistency of the iterates $(\hat{x}^{\ell})_{\ell \geq 1}$ generated by Algorithm~\ref{algo:adaptive-sequential-nonterminating}. 

\section{CONSISTENCY}\label{sec:inexact}

In this section, we treat the consistency of the stochastic iterates generated by the proposed algorithm. By consistency, we mean convergence guarantees (both almost sure and in expectation) associated with the true function values at the stochastic iterates, and the stochastic iterates themselves. This section also sets up the foundation for work complexity results of the subsequent section.

We begin with Lemma~\ref{lem:prox} --- a result on the behavior of approximate minimizers of a sequence of convex functions that uniformly converge to a limit function. We emphasize that this result is stated in a \emph{deterministic} setting and will become very useful in explaining the behavior of the sample paths in the stochastic context in the subsequent section. It also appears to be interesting in its own right due to applicability in the context of optimization with a deterministic inexact oracle. See~\cite{1983erm,1987pol,1978pol} for more on such problems. A complete proof is provided in the appendix of the online supplementary document~\cite{online}.

\begin{lemma}\label{lem:prox} Let $(f_k)_{k\geq 1}, f_k: \mathcal{X} \subset \mathbb{R}^d \to \mathbb{R}$ be a sequence of real-valued convex functions defined on the compact set $\mathcal{X}$. Let $f:\mathcal{X} \to \mathbb{R}$ be a real-valued function such that $f_k$ uniformly converges to $f$, that is, $$\lim_{k \to \infty} \sup_{x \in \mathcal{X}} |f_k(x) - f(x)| =0.$$ Denote $\delta_{k+1} := \underset{{x \in \mathcal{X}}}{\sup}\, |f_k(x) - f_{k+1}(x)|$, $\mathcal{S}^*_f := \underset{x \in \mathcal{X}}{\arg\min}\{f(x)\}$ and $v^* := \underset{x \in \mathcal{X}}{\min}\{f(x)\}$. The point $x_k$ is said to be $\epsilon_k$-optimal to $f_{k}$ over $\mathcal{X}$ if $x_k$ satisfies $ |f_k(x_k) - v_k^*| \leq \epsilon_k,$ where $v_k^* := \underset{x \in \mathcal{X}}{\min}\{f_k(x)\}$. Suppose the sequences $(\delta_k)_{k\geq 1}$, $(\epsilon_k)_{k \geq 1}$ satisfy \begin{equation} \label{errrate1} \sum_{j=1}^{\infty} \delta_j < \infty; \quad \sum_{j=1}^{\infty} \epsilon_j < \infty. \tag{SS-1} \end{equation} 
Then the following assertions hold.  \begin{enumerate} \item[\emph{(a)}] $f(x_k) \to v^*$ as $k \to \infty$; \item[\emph{(b)}] for each $k \geq 1$, $f(x_k) - v^* \leq 2\sum_{j=k}^{\infty} \delta_j + 2 \sum_{j=k}^{\infty} \epsilon_j.$ \end{enumerate} If $f$ obeys a growth rate condition, that is, there exist $\tau>0, \gamma>0$ such that for all $x \in \mathcal{X}$, \begin{equation}\label{growthrate} f(x) - v^* \geq \tau \, \emph{\mbox{dist}}(x,\mathcal{S}^*_f)^{\gamma}, \ \text{then}\end{equation}  \begin{enumerate} \item[\emph{(c)}] for each $k \geq 1$, $\emph{\mbox{dist}}(x_k,\mathcal{S}^*_f) \leq \left( 2 \tau^{-1} \left( \sum_{j=k}^{\infty} \delta_j + \sum_{j=k}^{\infty} \epsilon_j \right)\right)^{\frac{1}{\gamma}}.$ \end{enumerate} 
\end{lemma}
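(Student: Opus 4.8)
The plan is to reduce all three assertions to a single scalar estimate on the optimality gap $f(x_k) - v^*$, and to prove that estimate by controlling the uniform distance $\Delta_k := \sup_{x\in\mathcal{X}}|f_k(x) - f(x)|$ through the tail of the series $\sum_j \delta_j$. The key observation is that although $\Delta_k$ measures the gap between $f_k$ and its \emph{limit} $f$, it can be bounded by the increments $\delta_{j+1} = \sup_x|f_j(x) - f_{j+1}(x)|$ that actually appear in the summability hypothesis (SS-1).

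First I would establish the telescoping bound $\Delta_k \le \sum_{j=k}^{\infty} \delta_j$. Since $f_k \to f$ uniformly (hence pointwise) and $\sum_j \delta_j < \infty$, for each fixed $x$ the partial sums $\sum_{j=k}^{N-1}\bigl(f_{j+1}(x) - f_j(x)\bigr) = f_N(x) - f_k(x)$ converge as $N \to \infty$ to $f(x) - f_k(x)$; bounding each increment by $\delta_{j+1}$ and summing gives $|f(x) - f_k(x)| \le \sum_{j=k}^{\infty} \delta_{j+1} \le \sum_{j=k}^{\infty} \delta_j$, uniformly in $x$, so that $\Delta_k \le \sum_{j=k}^{\infty}\delta_j$. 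The absolute convergence of $\sum_j\delta_j$ is exactly what licenses passing to the limit here.

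For (b) I would decompose
\[
f(x_k) - v^* = \bigl(f(x_k) - f_k(x_k)\bigr) + \bigl(f_k(x_k) - v_k^*\bigr) + \bigl(v_k^* - v^*\bigr),
\]
bounding the first term by $\Delta_k$, the middle term by $\epsilon_k$ (the definition of $\epsilon_k$-optimality), and the last by the standard inequality $|v_k^* - v^*| \le \sup_x|f_k(x) - f(x)| = \Delta_k$ for minima of uniformly close functions. This yields $f(x_k) - v^* \le 2\Delta_k + \epsilon_k \le 2\sum_{j=k}^{\infty}\delta_j + 2\sum_{j=k}^{\infty}\epsilon_j$, which is (b). Assertion (a) is then immediate: $f(x_k)\ge v^*$ always, and the right-hand tails vanish as $k\to\infty$ because they are tails of convergent series. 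For (c) I would apply the growth condition \eqref{growthrate} at $x=x_k$, namely $\tau\,\mbox{dist}(x_k,\mathcal{S}^*_f)^{\gamma} \le f(x_k)-v^*$, substitute the bound from (b), divide by $\tau$, and take the $\gamma$-th root.

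The computation is short, so the only genuinely delicate points are the two limiting steps: justifying the telescoping identity for $\Delta_k$ (which rests on absolute summability of the increments, not merely on $\delta_j\to 0$) and the min-gap inequality $|v_k^*-v^*|\le\Delta_k$. I would also note that convexity of the $f_k$ plays no role in the core estimates; it is needed only to guarantee continuity of the $f_k$ and of the uniform limit $f$, hence attainment of the minima $v_k^*$, $v^*$ and nonemptiness of $\mathcal{S}^*_f$ on the compact set $\mathcal{X}$, so that all the objects in the statement are well defined.
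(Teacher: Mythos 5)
Your proof is correct, and it takes a genuinely different route from the paper's. The paper proves (a) first, by a standalone contradiction argument at a limit point of $(x_k)$ that invokes convexity through directional derivatives along a steepest-descent direction; it then proves (b) by telescoping \emph{along the iterate sequence}, writing $f(x_k)-v^*$ in terms of $T_n=\sum_{j=k}^n\bigl(f_{j+1}(x_k)-f_j(x_k)\bigr)$ and $S_n=\sum_{j=k}^n\bigl(f_j(x_j)-f_{j+1}(x_{j+1})\bigr)$, and bounding $S_n$ via the $\epsilon_j$-optimality of each $x_j$ together with the inequality $v_j^*-v_{j+1}^*\le\delta_{j+1}$. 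You instead telescope \emph{toward the limit function at a fixed point}, obtaining the uniform tail bound $\Delta_k:=\sup_{x\in\mathcal{X}}|f_k(x)-f(x)|\le\sum_{j\ge k}\delta_j$, and then a static three-term split at index $k$ (using $|v_k^*-v^*|\le\Delta_k$) gives (b) directly, with (a) falling out as a corollary since the tails of convergent series vanish. Your route is shorter, dispenses entirely with the convexity-based argument (correctly observing that convexity enters only to make the minima well defined, a point the paper itself concedes by assuming $\mathcal{X}$ sits in the relative interior of the domain), and yields the marginally sharper intermediate bound $2\Delta_k+\epsilon_k$ before relaxing to the stated form; the paper's proof, by contrast, carries a redundant qualitative argument for (a) that its own quantitative bound (b) supersedes. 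One caveat: your parenthetical claim that convexity guarantees continuity and attainment on the compact set $\mathcal{X}$ glosses over the fact that a real-valued convex function can be discontinuous (and fail to attain its infimum) at boundary points of its domain, but this is precisely the same implicit assumption the paper makes, and the lemma's statement already presupposes the minima exist.
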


We emphasize that the postulates of Lemma~\ref{lem:prox} allow $f_k,f$ to be non-smooth convex functions without a unique minimizer. Moreover, Lemma~\ref{lem:prox} guarantees through assertion (a)  that the function values at the iterates converge to the optimal value $v^*$ at a rate characterized in assertion (b). A corresponding rate guarantee on the distance between the $k$-th approximate solution $x_k$ and the true solution set $\mathcal{S}_f^*$ can be given under a growth rate assumption on the objective function $f$. 

Notice that Lemma~\ref{lem:prox} \emph{does not} assert that the sequence of approximate solutions $(x_k)_{k \geq 1}$ converges to a point in the solution set $\mathcal{S}_f^*$, but only that the distance between the sequence $(x_k)_{k\geq 1}$ and the set $\mathcal{S}^*_f$ converges to zero. A guarantee such as convergence to a point is not possible as is, but may be possible by solving regularized versions of $f_k$, assuming the regularization parameters are chosen appropriately. This question lies outside the scope of the current paper.  
  
We are now ready to characterize consistency in the stochastic context. The first (Theorem~\ref{lem:distbasicineq}) of these results asserts that the true function values at the iterates generated by the proposed algorithm converge to the optimal value almost surely and in expectation. Furthermore, if the objective function $q(\cdot)$ satisfies a growth condition on $\mathcal{X}$, then similar guarantees can be provided on the distance between the solutions $(\hat{x}^{\ell})_{\ell \geq 1}$ and the solution set $\mathcal{S}^*$.

\begin{theorem}[Consistency] \label{lem:distbasicineq} Suppose Assumption~\ref{ass:boundedfeasible} and~\ref{ass:varbd}, and the sample size condition \eqref{sampsizeloginc} hold, the following assertions about the iterates $(\hat{x}^{\ell})_{\ell \geq 1}$ generated by Algorithm~\ref{algo:adaptive-sequential-nonterminating} are true.  \begin{enumerate} \item[\emph{(a)}] $c^T\hat{x}^{\ell} + q(\hat{x}^{\ell}) \to z^* \emph{\mbox{ a.s.}}$ as $\ell \to \infty$; \item[\emph{(b)}] $\mathbb{E}\left[c^T\hat{x}^{\ell}+ q(\hat{x}^{\ell})\right] \to z^*$ as $\ell \to \infty$.
\end{enumerate} If Assumption~\ref{ass:fungrowth} also holds, then the following assertions hold as well. \begin{enumerate} \item[\emph{(c)}] $\emph{\mbox{dist}}(\hat{x}^{\ell},\mathcal{S}^*) \to 0 \emph{\mbox{ a.s.}}$ as $\ell \to \infty$; \item[\emph{(d)}] $\mathbb{E}[\emph{\mbox{dist}}(\hat{x}^{\ell},\mathcal{S}^*)] \to 0$ as $\ell \to \infty$.
\end{enumerate}
\end{theorem}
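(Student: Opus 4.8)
The plan is to reduce all four assertions to the deterministic Lemma~\ref{lem:prox}, applied pathwise (i.e., for almost every $\omega$), under the identifications $f_\ell := c^\top(\cdot) + Q^{\ell}_{m_\ell}(\cdot)$, $f := c^\top(\cdot) + q(\cdot)$, and $x_\ell := \hat{x}^\ell$, so that $v^* = z^*$ and $\mathcal{S}^*_f = \mathcal{S}^*$. The three hypotheses of Lemma~\ref{lem:prox} that I must verify are: uniform convergence $f_\ell \to f$; summability $\sum_\ell \delta_\ell < \infty$ of the increments $\delta_{\ell+1} = \sup_{x}|Q^{\ell}_{m_\ell}(x) - Q^{\ell+1}_{m_{\ell+1}}(x)|$; and summability $\sum_\ell \epsilon_\ell < \infty$ of the tolerances to which the sample-path problems are solved, i.e.\ condition \eqref{errrate1}.

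First I would control the uniform deviation $D_\ell := \sup_{x \in \mathcal{X}}|Q^{\ell}_{m_\ell}(x) - q(x)|$. The Jensen/conditional-Markov chain already carried out in the proof of Theorem~\ref{thm:mcestimator} (culminating in \eqref{concjen}) shows, via \eqref{msebd}, that $\mathbb{E}[D_\ell \mid \mathcal{F}_{\ell-1}] \le \sqrt{\kappa_0/m_\ell}$ a.s., hence $\mathbb{E}[D_\ell] \le \sqrt{\kappa_0/m_\ell}$ after taking total expectation. Summing and invoking \eqref{sampsizeloginc} gives $\sum_\ell \mathbb{E}[D_\ell] \le \sqrt{\kappa_0}\sum_\ell m_\ell^{-1/2} < \infty$, so by Tonelli $\mathbb{E}[\sum_\ell D_\ell] < \infty$ and therefore $\sum_\ell D_\ell < \infty$ almost surely. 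This single estimate delivers two hypotheses at once: it forces $D_\ell \to 0$ a.s.\ (uniform convergence), and since $\delta_{\ell+1} \le D_\ell + D_{\ell+1}$ by the triangle inequality, it gives $\sum_\ell \delta_\ell \le 2\sum_\ell D_\ell < \infty$ a.s.

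Next I would verify the tolerance summability together with the $\epsilon_\ell$-optimality of $\hat{x}^\ell$ for $f_\ell$. The inner-loop rule \eqref{innerterm} terminates with a gap estimate $G^{\ell,t} < \epsilon_{\ell,t} = \epsilon_\ell$, and because the cutting-plane lower bound satisfies $\underline{z}^\ell_t \le z^*_{m_\ell}$, the true sample-path optimality gap obeys $f_\ell(\hat{x}^\ell) - z^*_{m_\ell} = \bar{z}^\ell_t - z^*_{m_\ell} \le G^{\ell,t} < \epsilon_\ell$; thus $\hat{x}^\ell$ is $\epsilon_\ell$-optimal for $f_\ell$ in exactly the sense Lemma~\ref{lem:prox} requires. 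Crucially, the projection in \eqref{innerterm} caps the tolerance deterministically, $\epsilon_\ell \le \nu\,\sigmamax\, m_\ell^{-1/2}$, independently of the random inner index $t$ at termination, which is what neutralizes the stopping-time dependence; summing and using \eqref{sampsizeloginc} again gives $\sum_\ell \epsilon_\ell \le \nu\sigmamax \sum_\ell m_\ell^{-1/2} < \infty$. With all three hypotheses in hand on a probability-one event, Lemma~\ref{lem:prox}(a) yields (a), and Lemma~\ref{lem:prox}(c) applied with $\gamma = 1$, $\tau = \gamma_0$ (the first-order growth of Assumption~\ref{ass:fungrowth}) yields (c), since the tails $\sum_{j\ge\ell}\delta_j$ and $\sum_{j\ge\ell}\epsilon_j$ of convergent series vanish.

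For the in-expectation statements (b) and (d) I would take expectations in the explicit finite-$\ell$ bounds of Lemma~\ref{lem:prox}(b),(c) rather than upgrading a.s.\ convergence directly. Part (b) of that lemma gives $0 \le f(\hat{x}^\ell) - z^* \le 2\sum_{j\ge\ell}\delta_j + 2\sum_{j\ge\ell}\epsilon_j$ (the left inequality because $\hat{x}^\ell \in \mathcal{X}$), and taking expectations with $\mathbb{E}[\delta_j] \le \mathbb{E}[D_{j-1}] + \mathbb{E}[D_j]$ reduces the right side to tails of the convergent series $\sum_j \mathbb{E}[D_j] < \infty$ and $\sum_j \epsilon_j < \infty$, both vanishing; this gives (b). The linear ($\gamma = 1$) form of Lemma~\ref{lem:prox}(c) makes (d) equally direct: $\mathbb{E}[\mbox{dist}(\hat{x}^\ell,\mathcal{S}^*)] \le 2\gamma_0^{-1}\bigl(\sum_{j\ge\ell}\mathbb{E}[\delta_j] + \sum_{j\ge\ell}\epsilon_j\bigr) \to 0$. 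The step I expect to be the main obstacle is the passage from the algorithm's adaptive, gap-based stopping rule \eqref{innerterm} to a clean $\epsilon_\ell$-optimality certificate carrying a deterministic summable bound on $\epsilon_\ell$: this requires both the validity of the lower bound $\underline{z}^\ell_t \le z^*_{m_\ell}$ and a finite-termination guarantee for the inner loop (so that $\hat{x}^\ell$ is well defined), the latter resting on $\epsilon_{\ell,t} \ge \nu\sigmamin m_\ell^{-1/2} > 0$ together with the convergence of Solver-$\mathcal{A}$.
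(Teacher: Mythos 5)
Your proposal is correct and follows essentially the same route as the paper's proof: reduce everything to a pathwise application of Lemma~\ref{lem:prox} with $f_\ell = c^\top(\cdot)+Q^{\ell}_{m_\ell}(\cdot)$, establish $\sum_\ell \delta_\ell < \infty$ a.s.\ via the Jensen/Markov bound $\mathbb{E}[\sup_x|Q^{\ell}_{m_\ell}(x)-q(x)|]\leq\sqrt{\kappa_0/m_\ell}$ together with \eqref{sampsizeloginc}, and get $\sum_\ell\epsilon_\ell<\infty$ from the deterministic cap $\epsilon_\ell\leq\nu\,\sigma_{\scriptsize\mbox{max}}\,m_\ell^{-1/2}$ supplied by the projection in \eqref{innerterm}. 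The only deviation is in assertions (b) and (d), where the paper passes from a.s.\ convergence to convergence in expectation by boundedness of $q$ and of $\mbox{dist}(\cdot,\mathcal{S}^*)$ on the compact set $\mathcal{X}$ (bounded convergence), whereas you take expectations of the quantitative tail bounds in Lemma~\ref{lem:prox}(b),(c); both are valid, and your device is exactly the one the paper itself employs later in Theorems~\ref{thm:itercomp} and~\ref{geometric}.
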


\begin{proof} We will prove assertion (a) by demonstrating that the postulates for Lemma~\ref{lem:prox} (a) are satisfied except on a set (of sample-paths) of measure zero. 

We know that $\hat{x}^{\ell}$ is $\epsilon_{\ell}$-optimal to problem $(P_{\ell})$, that is, $|c^T\hat{x}^{\ell} + Q^{\ell}_{m_{\ell}}(\hat{x}^{\ell}) - z^*_{m_{\ell}}| \leq \epsilon_{\ell}.$ We also know that $Q^{\ell}_{m_{\ell}}(\cdot)$ is convex on $\mathcal{X}$, and from Theorem~\ref{thm:mcestimator}, $Q^{\ell}_{m_{\ell}}(\cdot)$ is uniformly convergent to $q(\cdot)$. In preparation to invoke Lemma~\ref{lem:prox}, denote $\delta_{\ell+1} := \sup_{x \in \mathcal{X}} |Q^{\ell}_{m_{\ell+1}}(x) - Q^{\ell}_{m_{\ell}}(x)|$ and notice that \begin{align}\label{deltasplitnoreg} \delta_{\ell} \leq \sup_{x \in \mathcal{X}} |Q^{\ell+1}_{m_{\ell+1}}(x) - q(x)| + \sup_{x \in \mathcal{X}}|Q^{\ell}_{m_{\ell}}(x) - q(x)| \nonumber := \zeta^{\ell+1}_{m_{\ell+1}} + \zeta^{\ell}_{m_{\ell}}. \end{align} The inequality in (\ref{deltasplitnoreg}) and Assumption~\ref{ass:varbd} imply that \begin{align}\label{deltajenbdnoreg} \mathbb{E}\left[\sum_{\ell=1}^n \delta_{\ell}\right] = \sum_{\ell=1}^n \mathbb{E}\left[\delta_{\ell}\right] & \leq \sum_{\ell=1}^n \mathbb{E}\left[\left(\zeta^{\ell+1}_{m_{\ell+1}}+\zeta^{\ell}_{m_{\ell}}\right)\right] \leq \sqrt{\kappa_0}\left(\sum_{\ell=1}^n \frac{1}{\sqrt{m_{\ell+1}}} + \sum_{\ell=1}^n \frac{1}{\sqrt{m_{\ell}}}\right),\end{align} where the last inequality in \eqref{deltajenbdnoreg} follows from Jensen's inequality~\cite[Theorem 5.1.3]{2010dur} applied to Assumption~\ref{ass:varbd}. Thus, 
\begin{equation}\label{expsumrtbdnoreg}\mathbb{E}\left[\sum_{\ell=1}^{\infty}  \delta_{\ell}\right] =  \sum_{\ell=1}^{\infty} \mathbb{E}\left[\delta_{\ell}\right] \leq \sqrt{\kappa_0}\left(\sum_{\ell=1}^{\infty} \frac{1}{\sqrt{m_{\ell+1}}} + \sum_{\ell=1}^{\infty} \frac{1}{\sqrt{m_{\ell}}}\right),
\end{equation} 
where the equality is due to the monotone convergence theorem~\cite[Theorem 16.2]{1995bil} and the inequality is due to \eqref{deltajenbdnoreg}. The inequality in \eqref{expsumrtbdnoreg} together with the sample size condition (\ref{sampsizeloginc}) implies that $\mathbb{E}\left[\sum_{\ell=1}^{\infty} \delta_{\ell}\right] < \infty$, and hence that $\sum_{\ell=1}^{\infty} \delta_{\ell} < \infty \mbox{ a.s.}$ Also, recall that the error tolerance sequence $(\epsilon_{\ell})_{\ell \geq 1}$ in Algorithm~\ref{algo:adaptive-sequential-nonterminating} has been chosen as $\epsilon_{\ell} = \nu\frac{1}{\sqrt{m_{\ell}}}\mbox{proj}(\hat{\sigma}_{\ell},[\sigmamin,\sigmamax]).$ This choice implies that $\sum_{\ell=1}^{\infty} \epsilon_{\ell} < \infty \mbox{ a.s.}$ The two inequalities above imply that all postulates leading to assertions (a) and (b) in Lemma~\ref{lem:prox} are satisfied on a set (of sample-paths) of measure one; we thus conclude that the assertion (a) of the theorem holds. The assertion in (b) follows from the assertion in (a) since the function $q(\cdot)$ is continuous on the compact set $\mathcal{X}$ and is hence bounded. 

If Assumption~\ref{ass:fungrowth} is satisfied, we know that \begin{equation}\label{distbdstoch} \mbox{dist}(\hat{x}^{\ell},\mathcal{S}^*) \leq \gamma_0^{-1}\left(c^T \hat{x}^{\ell} + q(\hat{x}^{\ell}) - z^*\right).\end{equation} Use assertion (a) and \eqref{distbdstoch} to conclude that assertion (c) holds. Furthermore, since $\mathcal{X}$ is compact, $\mbox{dist}(\hat{x}^{\ell},\mathcal{S}^*)$ is bounded and hence assertion (d) holds as well.

\end{proof}

Theorem~\ref{lem:distbasicineq} gives strong guarantees on the consistency of the objective function value at the iterates generated by Algorithm~\ref{algo:adaptive-sequential-nonterminating}. However, as is implied by assertion (c) of Theorem~\ref{lem:distbasicineq}, the solutions $(\hat{x}^{\ell})_{\ell \geq 1}$ can be guaranteed to only ``converge into" the true solution set $\mathcal{S}^*$ in the sense that the distance between $\hat{x}^{\ell}$ and the set $\mathcal{S}^*$ converges to zero almost surely and in expectation, and not that the sequence $(\hat{x}^{\ell})_{\ell \geq 1}$ is guaranteed to converge to a point. We are now ready to treat convergence rates in the stochastic context.

\section{ITERATION AND WORK COMPLEXITY GUARANTEES}\label{sec:complexity}
Theorem~\ref{lem:distbasicineq} guarantees that the sequence of iterates $(\hat{x}^\ell)_{\ell \geq 1}$ generated by Algorithm~\ref{algo:adaptive-sequential-nonterminating} are such that the corresponding objective function values converge to the optimal value almost surely and in expectation, and the iterates converge ``into" the true solution set $\mathcal{S}^*$, that is, their distance from $\mathcal{S}^*$ converges to zero almost surely and in expectation. In this section, we will provide a rigorous sense of how fast such convergence happens. Specifically, we provide complexity results that characterize the rate at which the optimality gap and the distance (from $\mathcal{S}^*$) converge to zero as a function of the iteration number and the total workload incurred through a specific iteration.   

The first result characterizes the sample-path \emph{iteration complexity} of the proposed algorithm, that is, the rate at which the convergence (as specified through Theorem~\ref{lem:distbasicineq}) happens as a function of iteration $\ell$. 

\begin{theorem}[Iteration Complexity] \label{thm:itercomp} Suppose that Assumption~\ref{ass:boundedfeasible} and Assumption~\ref{ass:varbd} hold, and that the sample size sequence obeys the following geometric increase for $\ell \geq 1$: \begin{equation}\label{sampsizegeoinc} m_{\ell} = c_1 \, m_{\ell-1}, \quad c_1 \in (1,\infty). \tag{SS-C}\end{equation} Then, \begin{equation} \label{gapiterrate} \mathbb{E}\left[ c^T\hat{x}^{\ell} + q(\hat{x}^{\ell}) - z^*\right] \leq 2 \,\kappa_2 \,c_1^{-\ell/2}, \text{ where } \kappa_2 :=  \sqrt{\frac{c_1}{m_1}}\,\frac{1}{\sqrt{c_1}-1}\left(\sqrt{\kappa_0}(\sqrt{c_1}+1) + \sigma_{\scriptsize{\emph{\mbox{max}}}}\sqrt{c_1\nu}\right). \end{equation}

If Assumption~\ref{ass:fungrowth} holds as well, then \begin{equation} \label{distiterrate} \mathbb{E}\left[ \emph{\mbox{dist}}(\hat{x}^{\ell},\mathcal{S}^*)\right] \leq 2 \,\tau_0^{-1} \kappa_2 \,c_1^{-\ell/2}.\end{equation}
\end{theorem}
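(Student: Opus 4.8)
The plan is to feed the geometric sample-size schedule \eqref{sampsizegeoinc} into the pathwise estimate of Lemma~\ref{lem:prox}(b) and then pass to expectations. Working on the full-measure event already identified in the proof of Theorem~\ref{lem:distbasicineq} (on which the summability hypotheses \eqref{errrate1} of Lemma~\ref{lem:prox} hold), I would apply Lemma~\ref{lem:prox}(b) with $f_{\ell}(x) := c^\top x + Q^{\ell}_{m_{\ell}}(x)$, $f(x) := c^\top x + q(x)$, and $x_{\ell} := \hat{x}^{\ell}$ (which is $\epsilon_{\ell}$-optimal for $(P_{\ell})$). This gives, almost surely and for every $\ell \geq 1$,
\[
 c^\top \hat{x}^{\ell} + q(\hat{x}^{\ell}) - z^* \;\leq\; 2\sum_{j=\ell}^{\infty} \delta_j + 2\sum_{j=\ell}^{\infty} \epsilon_j ,
\]
where $\delta_j$ and $\epsilon_j$ are precisely the quantities introduced in that earlier proof.

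Next I would take expectations and interchange $\mathbb{E}$ with the tail sums; this is legitimate by Tonelli / monotone convergence since every $\delta_j,\epsilon_j \geq 0$. The per-term bounds are already in hand: the triangle split $\delta_j \leq \zeta^{j+1}_{m_{j+1}} + \zeta^{j}_{m_{j}}$ combined with Assumption~\ref{ass:varbd} and (concave) Jensen gives $\mathbb{E}[\delta_j] \leq \sqrt{\kappa_0}\,(m_{j+1}^{-1/2} + m_{j}^{-1/2})$ (this is exactly the per-term content of \eqref{deltajenbdnoreg}), while the deterministic projection bound $\mbox{proj}(\hat{\sigma}_{j},[\sigmamin,\sigmamax]) \leq \sigmamax$ yields $\epsilon_j \leq \nu\,\sigmamax\, m_j^{-1/2}$ and hence $\mathbb{E}[\epsilon_j] \leq \nu\,\sigmamax\,m_j^{-1/2}$. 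Everything is thereby reduced to controlling tail sums of $m_j^{-1/2}$.

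The remaining step is a geometric-series computation. Under \eqref{sampsizegeoinc}, $m_j = c_1^{\,j-1} m_1$, so $m_j^{-1/2} = \sqrt{c_1/m_1}\, c_1^{-j/2}$ and
\[
 \sum_{j=\ell}^{\infty} m_j^{-1/2} = \sqrt{\frac{c_1}{m_1}}\,\frac{\sqrt{c_1}}{\sqrt{c_1}-1}\,c_1^{-\ell/2}.
\]
For the $\delta$-sum the shifted index $j+1$ contributes an extra factor $c_1^{-1/2}$, so that $\sum_{j\geq\ell}(m_{j+1}^{-1/2}+m_j^{-1/2})$ collapses (the loose $\sqrt{c_1}$ cancels) to $\sqrt{c_1/m_1}\,\frac{\sqrt{c_1}+1}{\sqrt{c_1}-1}\,c_1^{-\ell/2}$, while the $\epsilon$-sum contributes a $\nu\,\sigmamax\,\sqrt{c_1}$ term; collecting constants produces $\mathbb{E}[c^\top \hat{x}^{\ell} + q(\hat{x}^{\ell}) - z^*] \leq 2\kappa_2\, c_1^{-\ell/2}$ with $\kappa_2$ as stated in \eqref{gapiterrate}. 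For \eqref{distiterrate} I would invoke the first-order growth inequality of Assumption~\ref{ass:fungrowth}, namely $c^\top x + q(x) - z^* \geq \gamma_0\,\mbox{dist}(x,\mathcal{S}^*)$, to get $\mbox{dist}(\hat{x}^{\ell},\mathcal{S}^*) \leq \gamma_0^{-1}(c^\top \hat{x}^{\ell}+q(\hat{x}^{\ell})-z^*)$ pointwise; taking expectations and substituting the gap bound yields the claim (with $\tau_0 = \gamma_0$).

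I expect the main obstacle to be not any isolated estimate but the bookkeeping required to legitimize the whole chain in the \emph{adaptive} stochastic setting: because the tolerances $\epsilon_\ell$ and the inner stopping times are random, one must confirm that Lemma~\ref{lem:prox}(b) can be applied path-by-path on a common full-measure event, and that the decoupling bound $\epsilon_j \leq \nu\,\sigmamax\, m_j^{-1/2}$ holds irrespective of the stopping behavior (this is what removes the dependence on the random $\hat{\sigma}_j$). A secondary care point is the index shift in the $\delta$-sum, which must be tracked carefully to reproduce the exact constant $\kappa_2$.
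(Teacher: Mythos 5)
Your proposal is correct and follows essentially the same route as the paper's proof: invoke Lemma~\ref{lem:prox}(b) pathwise on the full-measure event established in the proof of Theorem~\ref{lem:distbasicineq}, interchange expectation with the tail sums by monotone convergence, bound $\mathbb{E}[\delta_j]$ and $\mathbb{E}[\epsilon_j]$ via Assumption~\ref{ass:varbd} (Jensen) and the projection bound, and sum the resulting geometric series under \eqref{sampsizegeoinc}. The only deviation is cosmetic: your direct bound $\mathbb{E}[\epsilon_j]\leq \nu\,\sigmamax\, m_j^{-1/2}$ yields $\nu\,\sigmamax\sqrt{c_1}$ in the bracket of $\kappa_2$, whereas the paper's intermediate inequality \eqref{condgapbdinitial} carries $\sqrt{\nu}$ and hence produces $\sigmamax\sqrt{c_1\nu}$ in \eqref{gapiterrate}, so your computation reproduces the paper's constant exactly only when $\nu=1$ --- this traces to an apparent inconsistency in the paper's own constant-tracking rather than to any gap in your argument.
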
 \begin{proof} Recall $\delta_{\ell+1} := \sup_{x \in \mathcal{X}} |Q^{\ell}_{m_{\ell+1}}(x) - Q^{\ell}_{m_{\ell}}(x)|$ and that the error tolerance sequence $(\epsilon_{\ell})_{\ell \geq 1}$ in Algorithm~\ref{algo:adaptive-sequential-nonterminating} has been chosen as $\epsilon_{\ell} = \nu\frac{1}{\sqrt{m_{\ell}}}\mbox{proj}(\hat{\sigma}_{\ell},[\sigmamin,\sigmamax]).$ From arguments in the proof of Theorem~\ref{lem:distbasicineq}, we know that $\sum_{\ell=1}^{\infty} \delta_{\ell} < \infty$ a.s., and that $\sum_{\ell=1}^{\infty} \epsilon_{\ell} < \infty$ a.s. This means that we can invoke assertion (b) of Lemma~\ref{lem:prox} on a set of measure one, that is, we have for each $\ell \geq 1$, \begin{equation}\label{optgapbd} c^\top\hat{x}^{\ell} + q(\hat{x}^{\ell}) - z^* \leq 2\sum_{k=\ell}^{\infty} \delta_{\ell} + 2\sum_{k=\ell}^{\infty} \epsilon_{\ell} \quad
 \mbox{ a.s. }\end{equation} From the monotone convergence theorem~\cite[Theorem 16.2]{1995bil}, Assumption~\ref{ass:varbd}, and the sample size choice \eqref{sampsizegeoinc}, we see that 
\begin{align}\label{sumdeltaagain} 
\mathbb{E}\left[ \sum_{k=\ell}^{\infty} \delta_k \right] & = \sum_{k=\ell}^{\infty} \mathbb{E}\left[  \delta_k \right] \leq \sqrt{\kappa_0}\left( \sum_{k = \ell}^{\infty} \frac{1}{\sqrt{m_{k+1}}} + \sum_{k = \ell}^{\infty} \frac{1}{\sqrt{m_{k}}} \right) \leq c_1^{-\ell/2}\,\sqrt{\frac{\kappa_0c_1}{m_1}}\,\frac{\sqrt{c_1}+1}{\sqrt{c_1}-1}.
\end{align} 
Also, since $\epsilon_{\ell} = \nu\frac{1}{\sqrt{m_{\ell}}}\mbox{proj}(\hat{\sigma}_{\ell},[\sigmamin,\sigmamax])$, we see that 
\begin{equation}\label{condgapbdinitial} 
\mathbb{E}\left[\epsilon_{\ell}\right] \leq \sqrt{\frac{\nu}{m_{\ell}} \sigmamax^2},
\end{equation} 
and hence 
\begin{align} \label{epsilonbdinitial} \mathbb{E}\left[\sum_{k=\ell}^{\infty} \epsilon_{k} \right] &:= \lim_{n \to \infty} \mathbb{E}\left[\sum_{k=\ell}^{n} \epsilon_k \right] = \lim_{n \to \infty} \sum_{k=\ell}^{n} \mathbb{E}\left[ \epsilon_k \right] \leq c_1^{-\ell/2}\,\sqrt{\frac{\nu \,\sigmamax^2c_1}{m_1}}\frac{\sqrt{c_1}}{\sqrt{c_1}-1},  
\end{align} 
where the inequality in \eqref{epsilonbdinitial} is due to \eqref{condgapbdinitial} and the sample size choice \eqref{sampsizegeoinc}. From \eqref{sumdeltaagain}, \eqref{epsilonbdinitial} and \eqref{optgapbd}, we conclude that the first assertion of the theorem (appearing in \eqref{gapiterrate}) holds. The second assertion of the theorem (appearing in \eqref{distiterrate}) follows trivially from the growth condition and the first assertion.

\end{proof}

Iteration complexity results such as that in Theorem~\ref{thm:itercomp} are generally of limited value (especially by themselves) in sampling contexts because they characterize the convergence rate in terms of the \emph{iteration number}, which is not reflective of the total computational work done. A more useful characterization of the convergence rate is what has been called \emph{work complexity}, which is essentially the error (in function value or distance from solution set) expressed as a function of the total computational work done, which for the current context includes the total number of second stage LPs solved. We take up this question next.

Towards characterizing the work complexity of the proposed algorithm, recall the iterative process: during iteration $\ell$, a chosen solver that we generically call Solver-$\mathcal{A}$ uses the solution $\hat{x}^{\ell-1}$ from the previous iteration as well as the dual vector information collected so far (for the special case of fixed recourse~\cite{vanAckooij_Oliveira_Song_2016,higle1991stochastic}) as ``warm start," and solves the sample-path problem $(P_{\ell})$ generated with sample $\mathcal{M}_{\ell} := \{\xi^{\ell}_1,\xi^{\ell}_2,\ldots,\xi^{\ell}_{m_{\ell}}\}$ to within tolerance $\epsilon_{\ell}$, that is, find $\hat{x}^{\ell} \in \mathcal{S}_{m_{\ell}}^*(\epsilon_{\ell})$. Given this structure, it makes sense then that the rapidity with which a point $\hat{x}^{\ell}$ is identified will play a central role in determining the overall work complexity of the proposed algorithm. Accordingly, we now make an assumption on the nature of Solver-$\mathcal{A}$ being used to solve the sample-path problem $(P_{\ell})$.

\begin{assumption}\label{ass:solver}
The Solver-$\mathcal{A}$ executed on the problem \emph{($P_{\ell}$)} having a piecewise linear convex objective, and with an initial solution $\hat{x}^{\ell-1} \in \mathcal{X}$, exhibits iteration complexity $\Lambda^2_{\ell}\,\emph{\mbox{dist}}^2\left(\hat{x}^{\ell-1}, S_{m_{\ell}}^*\right)\epsilon^{-2}$ to obtain an $\epsilon$-optimal solution, that is, 
\begin{equation}\label{ass:solvercomplexity}
\left(c^T\hat{x}^{\ell,t} + Q_{m_{\ell}}(\hat{x}^{\ell,t})\right) - z^*_{m_{\ell}}\leq \Lambda_{\ell} \frac{  \emph{\mbox{dist}}\left(\hat{x}^{\ell-1}, S_{m_{\ell}}^*\right)}{\sqrt{t}}, \quad t = 1,2,\ldots,
\end{equation}
where $\hat{x}^{\ell,t}$ is the $t$-th iterate returned by Solver-$\mathcal{A}$, and $S^*_{m_\ell}$ is the set of optimal solutions corresponding to problem \emph{($P_{\ell}$)}. Denote the growth-rate $\gamma_{\ell}$ of the sample-path function \begin{equation}\label{sampgrowth}\gamma_{\ell} : = \sup_{s}\left\{s: c^Tx + Q_{m_{\ell}}^{\ell}(x) - z_{m_\ell}^* \geq s \, \emph{\mbox{dist}}(x,S_{m_{\ell}}^*)  \quad \forall x \in \mathcal{X}\right\},\end{equation} there exists $\lambda < \infty$ such that \begin{equation}\label{constvar}\mathbb{E}\left[\left(\frac{\Lambda_{\ell}}{\gamma_{\ell}}\right)^2 \, \vert \, \mathcal{F}_{\ell-1} \right] \leq \lambda^2 < \infty \mbox{ a.s.}\end{equation} 
\end{assumption} Assumption~\ref{ass:solver} has been stated in a way that preserves generality of our theory, with the intent of allowing any choice of Solver-$\mathcal{A}$ as long as the stipulation of Assumption~\ref{ass:solver} is met. Furthermore, we emphasize that Assumption~\ref{ass:solver} has been stated for piecewise linear convex objectives, since the objective function of the sample-path problem $(P_{\ell})$ is piecewise linear convex. For instance, a number of well-known subgradient algorithms provide a guaranteed iteration complexity of the sort stipulated in~\eqref{ass:solvercomplexity} of Assumption~\ref{ass:solver} even for convex non-smooth objectives. For example, the standard subgradient descent algorithm having the iterative structure $x_{t+1} = x_t - \alpha_{t} \partial h(x_{t}), t=0,1,2,\ldots$ for solving the convex optimization problem $\min_{x\in \mathcal{X}}\{h(x)\}$, when executed with constant step size $\alpha_{t} = \epsilon/M^2$ and $\|\partial h(x)\|\leq M, \ \forall x\in \mathcal{X}$, satisfies the complexity requirement stated in Assumption~\ref{ass:solver}. Another recent example is a variant of the level bundle method~\cite{Cruz_Oliveira_2014} under an idealized assumption. In our numerical experiments presented in Section~\ref{sec:numerical}, we use an implementable variant of the level bundle method as Solver-$\mathcal{A}$, which is described in greater detail in the appendix of the online supplementary document~\cite{online}

The assumption appearing in~\eqref{constvar} on the finiteness of the second moment of the ratio $\Lambda_{\ell}/\gamma_{\ell}$ is a stipulation on the extent of the ``ill-conditioning" of the sample-path problems. To see this, consider  using the level method~\cite[Chapter 3]{2004nes} as Solver-$\mathcal{A}$ in the proposed algorithm. It follows from a well-known result~\cite[pp. 163]{2004nes} that $\Lambda_{\ell}$ then satisfies \begin{equation}\label{follows}\Lambda_{\ell} \leq \frac{M_{\ell}}{\sqrt{\alpha (1-\alpha)^2 (2 - \alpha)}}, \quad \alpha \in (0,1)\end{equation} where $\alpha \in (0,1)$ is a user-chosen constant within the level method, and $M_{\ell} := \sup_{x \in \mathcal{X}}\{ \| c + \partial Q_{m_{\ell}}(x) \| \}$ is the supremum norm (taken over the fixed compact set $\mathcal{X}$) of the sub-gradient associated with the sample-path function. It follows from~\eqref{follows} then that \begin{equation}\label{followsagain} \frac{\Lambda_{\ell}}{\gamma_{\ell}} \leq \frac{1}{\sqrt{\alpha(1-\alpha)^{2}(2-\alpha)}}\,\frac{M_{\ell}}{\gamma_{\ell}},\end{equation} where the ratio $M_{\ell}/\gamma_{\ell}$ has the interpretation of the ``condition number" of the $\ell$-th sample-path problem. It is in this sense that the condition appearing in~\eqref{constvar} can be violated in pathological settings where, persistently, the sample-path function remains ``steep" in certain directions but ``flat" in others. Also, notice that from the Cauchy-Schwarz inequality, the condition in~\eqref{constvar} is satisfied, e.g., if the fourth moments of $\Lambda_{\ell}$ and $\gamma_{\ell}^{-1}$ exist, i.e., $\mathbb{E}[\Lambda_{\ell}^4 \, \vert \, \mathcal{F}_{\ell-1}] < \infty$ and $\mathbb{E}[\gamma_{\ell}^{-4} \, \vert \, \mathcal{F}_{\ell-1}] < \infty$ a.s. The following lemma is an obvious consequence of Assumption~\ref{ass:solver}.  

\begin{lemma}\label{level-complexity} 
Suppose Assumption~\ref{ass:boundedfeasible} and~\ref{ass:solver} hold. Let $N_{\ell}$ denote the number of iterations by Solver-$\mathcal{A}$ to solve problem $(P_{\ell})$ to within optimality gap $\epsilon_{\ell}>0$ starting at $\hat{x}^{\ell-1}$, i.e., $N_{\ell} := \inf\left\{\bar{t}: \left(c^\top \hat{x}^{\ell,t} + Q^\ell_{m_\ell}(\hat{x}^{\ell,t})\right) - z^*_{m_{\ell}} \leq \epsilon_{\ell} \mbox{ for all } t \geq \bar{t}, \quad \hat{x}^{\ell,0}:= \hat{x}^{\ell-1}\right\}.$ Then, $\exists \Lambda_{\ell} \in \mathcal{F}_{\ell}$:  
\begin{equation*}
\mathbb{P}\left\{N_{\ell} > \Lambda^2_{\ell} \frac{\left(\emph{\mbox{dist}}(\hat{x}^{\ell-1},\mathcal{S}^*_{m_{\ell}})\right)^2}{\epsilon_{\ell}^2} \, \vert \, \mathcal{F}_{\ell-1} \right\} = 0 \emph{\mbox{, and }} \mathbb{E}\left[\left(\frac{\Lambda_{\ell}}{\gamma_{\ell}}\right)^2 \, \vert \, \mathcal{F}_{\ell-1} \right] < \infty \emph{\mbox{ a.s.}}
\end{equation*}  
\end{lemma}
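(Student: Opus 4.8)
The plan is to read both assertions essentially directly off Assumption~\ref{ass:solver}, the only genuine work being to convert the convergence-rate bound \eqref{ass:solvercomplexity} into a deterministic ceiling on the stopping index $N_{\ell}$. The natural choice is to take the $\Lambda_{\ell}$ of the lemma to be exactly the $\mathcal{F}_{\ell}$-measurable constant $\Lambda_{\ell}$ supplied by Assumption~\ref{ass:solver}; with this choice the second assertion of the lemma is literally \eqref{constvar} and needs no further argument.

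For the first assertion, I would condition on $\mathcal{F}_{\ell-1}$ and then fix a realization of the scenarios $\mathcal{M}_{\ell}$ drawn during outer iteration $\ell$, so that $\mathcal{S}^*_{m_{\ell}}$, $z^*_{m_{\ell}}$, $\hat{\sigma}_{\ell}$, and hence $\epsilon_{\ell}$ and $\Lambda_{\ell}$ all become deterministic (the warm-start point $\hat{x}^{\ell-1}$ is already fixed by $\mathcal{F}_{\ell-1}$). I would then ask when the right-hand side of \eqref{ass:solvercomplexity} falls to the target tolerance: the inequality $\Lambda_{\ell}\,\mbox{dist}(\hat{x}^{\ell-1},\mathcal{S}^*_{m_{\ell}})/\sqrt{t} \leq \epsilon_{\ell}$ rearranges to $t \geq \Lambda_{\ell}^2\,\mbox{dist}^2(\hat{x}^{\ell-1},\mathcal{S}^*_{m_{\ell}})/\epsilon_{\ell}^2$. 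Since the bound in \eqref{ass:solvercomplexity} is an upper bound on the gap that is monotone decreasing in $t$, once $t$ crosses this threshold the gap stays at or below $\epsilon_{\ell}$ for every subsequent inner iteration; this monotonicity is precisely what lets me accommodate the ``for all $t \geq \bar{t}$'' clause in the definition of $N_{\ell}$, so that the threshold is an admissible $\bar{t}$ and hence $N_{\ell} \leq \Lambda_{\ell}^2\,\mbox{dist}^2(\hat{x}^{\ell-1},\mathcal{S}^*_{m_{\ell}})/\epsilon_{\ell}^2$ for that realization.

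Because this deterministic bound holds for every realization of the iteration-$\ell$ sample, the event $\{N_{\ell} > \Lambda_{\ell}^2\,\mbox{dist}^2(\hat{x}^{\ell-1},\mathcal{S}^*_{m_{\ell}})/\epsilon_{\ell}^2\}$ is empty, so its conditional probability given $\mathcal{F}_{\ell-1}$ vanishes almost surely, which is the first assertion.

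I expect the only real obstacle to be measurability bookkeeping rather than analysis: I must confirm that $\Lambda_{\ell}$ (and $\epsilon_{\ell}$, $\gamma_{\ell}$, $\mathcal{S}^*_{m_{\ell}}$) are $\mathcal{F}_{\ell}$-measurable so that the lemma's requirement $\Lambda_{\ell} \in \mathcal{F}_{\ell}$ is met, and that the pointwise bound genuinely yields a conditional-probability-zero statement rather than merely an unconditional one. Both are handled by observing that all of these objects are functions of the freshly drawn sample $\mathcal{M}_{\ell}$ together with the $\mathcal{F}_{\ell-1}$-measurable warm start $\hat{x}^{\ell-1}$, so that conditioning first on $\mathcal{F}_{\ell-1}$ and then on $\mathcal{M}_{\ell}$ recovers the deterministic setting used above.
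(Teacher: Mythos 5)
Your proposal is correct and matches the paper's reasoning: the paper offers no explicit proof, stating only that the lemma is ``an obvious consequence of Assumption~\ref{ass:solver},'' and your argument is precisely that consequence spelled out --- rearranging the rate bound \eqref{ass:solvercomplexity} to get a deterministic (conditionally on the iteration-$\ell$ sample) ceiling on $N_{\ell}$, using monotonicity of the bound in $t$ to handle the ``for all $t \geq \bar{t}$'' clause, and reading the second assertion directly off \eqref{constvar}. The measurability bookkeeping you flag ($\Lambda_{\ell}, \epsilon_{\ell}, \gamma_{\ell}, \mathcal{S}^*_{m_{\ell}}$ being functions of $\mathcal{M}_{\ell}$ and $\hat{x}^{\ell-1}$, hence $\mathcal{F}_{\ell}$-measurable) is exactly what makes the conditional-probability-zero statement legitimate, so nothing is missing.
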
 

We will now combine the iteration complexities characterized in Theorem~\ref{thm:itercomp} and Lemma~\ref{level-complexity} to characterize the work complexity of the proposed Algorithm~\ref{algo:adaptive-sequential-nonterminating}.

\begin{theorem}\label{geometric} 
Suppose Assumption~\ref{ass:boundedfeasible},~\ref{ass:varbd} and~\ref{ass:solver} hold. Define $W_L := \sum_{\ell = 1}^L \tilde{W}_\ell$, where $\tilde{W}_\ell$ is the number of second-stage LPs solved during the $\ell$-th outer iteration of Algorithm~\ref{algo:adaptive-sequential-nonterminating}. Suppose $(m_{\ell})_{\ell \geq 1}$ satisfies the geometric increase sampling condition in (\ref{sampsizegeoinc}).  Then, for $L \geq 1$, \begin{equation}\label{workcomplexity}\mathbb{E}\left[\left(c^T\hat{x}^L + q(\hat{x}^{L}) - z^*\right)\right] \leq \tau_0/\mathbb{E}[\sqrt{W_L}], \text{ where $\tau_0$ is a constant independent of $L$.}\end{equation} If Assumption~\ref{ass:fungrowth} also holds, then for $L \geq 1$,
\begin{equation}\label{workdistcomplexity}
\mathbb{E}\left[\emph{\mbox{dist}}(\hat{x}^L,\mathcal{S}^*)\right] \leq \tau_0 \gamma_0^{-1} / \mathbb{E}[\sqrt{W_L}], \text{ where $\gamma_0$ is the growth-rate constant in Assumption~\ref{ass:fungrowth}.}
\end{equation} 
\end{theorem}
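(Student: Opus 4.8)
The plan is to combine the per-iteration gap decay already established in Theorem~\ref{thm:itercomp} with an upper bound on the expected cumulative work, and then to pass through Jensen's inequality. Theorem~\ref{thm:itercomp} gives $\mathbb{E}[c^T\hat{x}^L+q(\hat{x}^L)-z^*]\le 2\kappa_2\,c_1^{-L/2}$, so it suffices to prove a matching upper bound $\mathbb{E}[\sqrt{W_L}]\le \bar{C}\,c_1^{L/2}$ with $\bar{C}$ independent of $L$; multiplying the two inequalities yields \eqref{workcomplexity} with $\tau_0:=2\kappa_2\bar{C}$, and \eqref{workdistcomplexity} then follows at once from the first-order growth condition of Assumption~\ref{ass:fungrowth}, exactly as \eqref{distiterrate} was deduced from \eqref{gapiterrate}.

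To bound $\mathbb{E}[\sqrt{W_L}]$, I would first write the per-outer-iteration work as $\tilde{W}_\ell = m_\ell N_\ell$, since every inner iteration of Solver-$\mathcal{A}$ evaluates the sample-path objective once and hence solves the $m_\ell$ second-stage LPs, while $N_\ell$ (as in Lemma~\ref{level-complexity}) counts the inner iterations needed to reach tolerance $\epsilon_\ell$. The crux is then to show that $\mathbb{E}[N_\ell\mid\mathcal{F}_{\ell-1}]$ is bounded by a constant uniformly in $\ell$. By Lemma~\ref{level-complexity}, almost surely $N_\ell\le \Lambda_\ell^2\,\mbox{dist}^2(\hat{x}^{\ell-1},\mathcal{S}^*_{m_\ell})/\epsilon_\ell^2$. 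I would control the distance through the sample-path growth rate $\gamma_\ell$ of \eqref{sampgrowth}: decomposing the warm-start gap for $(P_\ell)$ as $c^T\hat{x}^{\ell-1}+Q^\ell_{m_\ell}(\hat{x}^{\ell-1})-z^*_{m_\ell}\le 2\delta_\ell+\epsilon_{\ell-1}$, where $\delta_\ell:=\sup_{x\in\mathcal{X}}|Q^\ell_{m_\ell}(x)-Q^{\ell-1}_{m_{\ell-1}}(x)|$ dominates both the drift between consecutive sample-path objectives and the gap between their optimal values, and invoking the growth bound $\mbox{dist}(\hat{x}^{\ell-1},\mathcal{S}^*_{m_\ell})\le\gamma_\ell^{-1}(2\delta_\ell+\epsilon_{\ell-1})$, gives
\[
N_\ell\le\left(\frac{\Lambda_\ell}{\gamma_\ell}\right)^2\frac{(2\delta_\ell+\epsilon_{\ell-1})^2}{\epsilon_\ell^2}.
\]
This is exactly the form in which the ``condition number'' $\Lambda_\ell/\gamma_\ell$ appears, which is precisely why Assumption~\ref{ass:solver} controls its conditional second moment rather than that of $\Lambda_\ell$ alone.

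It remains to bound the last factor. The clipping device gives deterministic control on the tolerance ratio: $\epsilon_\ell\ge\nu\,\sigmamin\,m_\ell^{-1/2}$ and $\epsilon_{\ell-1}\le\nu\,\sigmamax\,m_{\ell-1}^{-1/2}$, so that $\epsilon_{\ell-1}/\epsilon_\ell\le(\sigmamax/\sigmamin)\sqrt{c_1}$ almost surely under the geometric schedule \eqref{sampsizegeoinc}. For the drift, the triangle bound $\delta_\ell\le\|Q^\ell_{m_\ell}-q\|+\|Q^{\ell-1}_{m_{\ell-1}}-q\|$ together with the canonical-rate bound \eqref{msebd} of Assumption~\ref{ass:varbd} yields $\mathbb{E}[\delta_\ell^2 m_\ell]\le 2\kappa_0(1+c_1)$, so that after using $\epsilon_\ell^2\ge\nu^2\sigmamin^2 m_\ell^{-1}$ almost surely, the drift contribution $\delta_\ell^2/\epsilon_\ell^2\le\delta_\ell^2 m_\ell/(\nu^2\sigmamin^2)$ has expectation $O(1)$. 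Combining these estimates---modulo the decoupling addressed below---gives $\mathbb{E}[N_\ell]\le\kappa_3$ for a constant $\kappa_3$, whence $\mathbb{E}[\tilde{W}_\ell]=m_\ell\,\mathbb{E}[N_\ell]\le\kappa_3 m_\ell$, the geometric sum gives $\mathbb{E}[W_L]\le\kappa_3\sum_{\ell=1}^L m_\ell\le\kappa_4\,c_1^{L}$, and Jensen's inequality (concavity of $\sqrt{\cdot}$) delivers $\mathbb{E}[\sqrt{W_L}]\le\sqrt{\mathbb{E}[W_L]}\le\sqrt{\kappa_4}\,c_1^{L/2}$, as required.

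The main obstacle is the decoupling inside $\mathbb{E}[N_\ell\mid\mathcal{F}_{\ell-1}]$: the solver's conditioning $\Lambda_\ell/\gamma_\ell$ and the drift $\delta_\ell$ are both functionals of the same $\ell$-th scenario sample and are therefore correlated, so neither factor can simply be pulled out. The portion of $\delta_\ell$ built from $\|Q^{\ell-1}_{m_{\ell-1}}-q\|$ is $\mathcal{F}_{\ell-1}$-measurable and decouples cleanly (its conditional expectation multiplies the bounded second moment $\mathbb{E}[(\Lambda_\ell/\gamma_\ell)^2\mid\mathcal{F}_{\ell-1}]\le\lambda^2$ of Assumption~\ref{ass:solver}), as does the $\epsilon_{\ell-1}^2/\epsilon_\ell^2$ term via the almost-sure clipping bounds. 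The genuinely delicate contribution is $(\Lambda_\ell/\gamma_\ell)^2\,\|Q^\ell_{m_\ell}-q\|^2 m_\ell$, where the current-sample sampling error is correlated with the current-sample conditioning; closing it cleanly requires a Cauchy--Schwarz split together with finite fourth moments of $\Lambda_\ell/\gamma_\ell$ and of $\sqrt{m_\ell}\,\|Q^\ell_{m_\ell}-q\|$ (i.e.\ a strengthening of \eqref{msebd} to fourth-moment decay $O(m^{-2})$), or, absent such moments, an almost-sure bound on the condition number that lets $(\Lambda_\ell/\gamma_\ell)^2$ be extracted directly. I would isolate this cross term and treat it separately from the almost-surely bounded pieces, making explicit whichever moment hypothesis is invoked to finish.
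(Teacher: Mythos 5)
Your high-level frame (multiply the gap decay of Theorem~\ref{thm:itercomp} by a bound $\mathbb{E}[\sqrt{W_L}]\le\bar{C}\,c_1^{L/2}$) is the same as the paper's, but the route you take to the work bound contains a genuine gap --- one you flag yourself but do not close, and which \emph{cannot} be closed under the paper's stated assumptions. By passing through $\mathbb{E}[W_L]$ and Jensen, you are forced to bound $\mathbb{E}[N_\ell]$, i.e.\ the expectation of $(\Lambda_\ell/\gamma_\ell)^2(2\delta_\ell+\epsilon_{\ell-1})^2/\epsilon_\ell^2$, whose dominant cross term is $m_\ell\,(\Lambda_\ell/\gamma_\ell)^2\,\delta_\ell^2$ after clipping $\epsilon_\ell\ge\nu\sigmamin m_\ell^{-1/2}$. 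Both factors are functionals of the \emph{same} $\ell$-th sample, so decoupling them by Cauchy--Schwarz requires conditional fourth moments of $\Lambda_\ell/\gamma_\ell$ and a fourth-moment analogue of \eqref{msebd} with $O(m^{-2})$ decay; Assumptions~\ref{ass:solver} and~\ref{ass:varbd} supply only second moments. The additional hypotheses you propose (fourth moments, or an almost-sure bound on the condition number) would therefore prove a weaker theorem than the one stated.

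The paper's proof sidesteps this obstruction by never forming $\mathbb{E}[N_\ell]$ at all: it bounds $\mathbb{E}[\sqrt{W_L}]$ directly through subadditivity of the square root, $\sqrt{W_L}\le\sum_{\ell=1}^L\sqrt{\tilde{W}_\ell}$, together with
\[
\sqrt{\tilde{W}_\ell}=\sqrt{N_\ell\,m_\ell}\le \Lambda_\ell\,\frac{\mbox{dist}\left(\hat{x}^{\ell-1},\mathcal{S}^*_{m_\ell}\right)}{\epsilon_\ell}\,\sqrt{m_\ell}.
\]
Because only the \emph{first} power of the correlated product now appears, one conditional Cauchy--Schwarz step suffices: using the definition \eqref{sampgrowth} to write $\Lambda_\ell\,\mbox{dist}(\hat{x}^{\ell-1},\mathcal{S}^*_{m_\ell})\le(\Lambda_\ell/\gamma_\ell)\left(c^T\hat{x}^{\ell-1}+Q^\ell_{m_\ell}(\hat{x}^{\ell-1})-z^*_{m_\ell}\right)$ and splitting off $\left(\mathbb{E}[(\Lambda_\ell/\gamma_\ell)^2\,\vert\,\mathcal{F}_{\ell-1}]\right)^{1/2}\le\lambda$ leaves only $\mathbb{E}\left[(3\delta_\ell+\epsilon_{\ell-1})^2\right]=O(1/m_\ell)$, which is exactly what Assumption~\ref{ass:varbd} and the clipping of $\epsilon_{\ell-1}$ control. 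This gives $\mathbb{E}[\sqrt{\tilde{W}_\ell}]=O(\sqrt{m_\ell})$, hence $\mathbb{E}[\sqrt{W_L}]=O(c_1^{L/2})$, with no moment strengthening. The remaining ingredients of your proposal --- the warm-start gap decomposition, the bound $|z^*_{m_{\ell-1}}-z^*_{m_\ell}|\le 2\delta_\ell$ (your one-sided version with constant $2$ is in fact slightly sharper than the paper's $3\delta_\ell$), the clipping estimates, and the final multiplication against the gap decay --- all match the paper's argument; it is only the Jensen detour through $\mathbb{E}[W_L]$ that must be replaced by the square-root/subadditivity device.
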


\begin{proof}
According to Lemma~\ref{level-complexity}, and recalling that up to $m_\ell$ second-stage LPs are solved in each iteration (e.g., when one employs a scenario decomposition algorithm), for every $\ell \geq 1$:
\begin{align}\label{workbdprelim} 
\mathbb{E}\left[\sqrt{\tilde{W}_\ell}\right]  = \mathbb{E}\left[ \sqrt{N_{\ell}\, m_{\ell}}\right]  \leq \mathbb{E}\left[\mathbb{E}\left[\Lambda_{\ell}\frac{\left(\mbox{dist}(\hat{x}^{\ell-1},\mathcal{S}^*_{m_{\ell}})\right)}{\epsilon_\ell} \, \sqrt{m_\ell} \, \bigg\vert \, \mathcal{F}_{\ell -1} \right]\right].
\end{align} 
Using \eqref{workbdprelim} and $\epsilon_{\ell} := \nu\,m_{\ell}^{-1/2}\mbox{proj}(\hat{\sigma}_{\ell},[\sigmamin,\sigmamax])$, we get for large enough $\ell$ that
\begin{align}\label{prelimsplit}
\mathbb{E}\left[\sqrt{\tilde{W}_\ell}\right]  &\leq \frac{m_{\ell}}{\nu\sigmamin}\mathbb{E}\left[\mathbb{E}\left[\Lambda_{\ell}\,\,\mbox{dist}(\hat{x}^{\ell-1},\mathcal{S}^*_{m_{\ell}}) \, \vert \, \mathcal{F}_{\ell-1}\right] \right] \nonumber \\
& \leq \frac{m_{\ell}}{\nu\sigmamin}\mathbb{E}\left[\left(\mathbb{E}\left[\left(\frac{\Lambda_{\ell}}{\gamma_{\ell}} \right)^{2}\,\vert \, \mathcal{F}_{\ell-1} \right]\right)^{\frac{1}{2}}\left(\mathbb{E}\left[\left( c^T\hat{x}^{\ell-1} + Q^{\ell}_{m_{\ell}}(\hat{x}^{\ell-1}) - z^*_{m_{\ell}}\right)^2\,\vert \, \mathcal{F}_{\ell-1} \right]\right)^{\frac{1}{2}}\right] \nonumber \\
& \leq \frac{m_{\ell}\lambda}{\nu\sigmamin}\mathbb{E}\left[\left(\mathbb{E}\left[\left( c^T\hat{x}^{\ell-1} + Q^{\ell}_{m_{\ell}}(\hat{x}^{\ell-1}) - z^*_{m_{\ell}}\right)^2 \, \vert \, \mathcal{F}_{\ell-1}\right]\right)^{\frac{1}{2}}\right] \nonumber \\
& \leq \frac{m_{\ell}\lambda}{\nu\sigmamin}\left(\mathbb{E}\left(\mathbb{E}\left[\left( c^T\hat{x}^{\ell-1} + Q^{\ell}_{m_{\ell}}(\hat{x}^{\ell-1}) - z^*_{m_{\ell}}\right)^2 \, \vert \, \mathcal{F}_{\ell-1}\right]\right)\right)^{\frac{1}{2}} \nonumber \\
 &\leq \frac{m_{\ell} \lambda}{\nu\sigmamin}\left(\mathbb{E}\left[ \left(\delta_{\ell} + \epsilon_{\ell-1} + |z^*_{m_{\ell-1}} - z^*_{m_{\ell}}|\right)^2\right]\right)^{\frac{1}{2}}, 
 \end{align} 
where the second inequality in~\eqref{prelimsplit} uses the Cauchy-Schwarz inequality (conditionally) and the definition in~\eqref{sampgrowth} of the sample-path growth rate, the third inequality uses the finite second moment assumption in~\eqref{constvar} of Assumption~\ref{ass:solver}, the fourth inequality uses the concavity of the square root function, and the last inequality uses $\delta_{\ell} := \sup_{x \in \mathcal{X}}\{|Q_{m_{\ell}}^{\ell}(x) - Q_{m_{\ell-1}}^{\ell-1}(x)|\}$ and the fact that $\hat{x}^{\ell-1} \in \mathcal{F}_{\ell-1}$ is $\epsilon_{\ell-1}$-optimal to $c^Tx + Q^{\ell-1}_{m_{\ell-1}}(x)$ over the set $\mathcal{X}$. Next, let $x^*_{\ell} \in S^*_{m_{\ell}}$, $x^*_{\ell-1} \in S^*_{m_{\ell-1}}$ and observe that \begin{align}\label{mindiff}|z^*_{m_{\ell-1}} - z^*_{m_{\ell}}| & \leq (c^Tx^*_{\ell} + Q^{\ell-1}_{m_{\ell-1}}(x^*_{\ell}) - z^*_{m_{\ell}}) + (c^Tx^*_{\ell-1} + Q^{\ell}_{m_{\ell}}(x^*_{\ell-1}) - z^*_{m_{\ell-1}}) \nonumber \\ &\leq 2\sup_{x \in \mathcal{X}}\left\{|Q^{\ell}_{m_{\ell}}(x) - Q^{\ell-1}_{m_{\ell-1}}(x)|\right\} =  2\delta_{\ell}.\end{align} Using~\eqref{mindiff} in~\eqref{prelimsplit}, we get for large enough $\ell$,
\begin{equation}\label{split}
\mathbb{E}\left[\sqrt{\tilde{W}_{\ell}}\right] \leq \lambda \frac{m_{\ell}}{\nu\sigmamin}\left(\mathbb{E}\left[\left(3\delta_{\ell} + \epsilon_{\ell-1}  \right)^2 \right]\right)^{\frac{1}{2}} \leq \lambda \frac{m_{\ell}}{\nu\sigmamin}\left(\mathbb{E}\left[18\delta_{\ell}^2 + 2\nu^2\frac{\sigmamax^2}{m_{\ell-1}}  \right]\right)^{\frac{1}{2}}, 
\end{equation} 
where the second inequality above uses $(a+b)^2 \leq 2a^2 + 2b^2$ and $\epsilon_{\ell} := \nu \, m_{\ell}^{-1/2}\mbox{proj}(\hat{\sigma}_{\ell},[\sigmamin,\sigmamax])$. Observing that $W_L = \sum_{\ell=1}^L \tilde{W}_{\ell}$, \eqref{split} implies that \begin{align}\label{worksep} \mathbb{E}[\sqrt{W_L}] & \leq \sum_{\ell=1}^L \mathbb{E}\left[\sqrt{\tilde{W}_{\ell}}\right] \leq \sum_{\ell=1}^L \lambda \frac{m_{\ell}}{\nu\sigmamin}\left(18\mathbb{E}\left[\delta_{\ell}^2\right] + 2\nu^2\frac{\sigmamax^2}{m_{\ell-1}}  \right)^{\frac{1}{2}} \nonumber \\ & \leq \sum_{\ell=1}^L \lambda  \frac{m_{\ell}}{\nu\sigmamin}\left(18\frac{\kappa_0(1+c_1+2\sqrt{c_1})}{m_{\ell}} + 2\nu^2c_1\frac{\sigmamax^2}{m_{\ell}}  \right)^{\frac{1}{2}} \nonumber \\ &\leq \frac{\lambda}{\nu\sigmamin}\left(18\kappa_0(1+c_1+2\sqrt{c_1}) + 2\nu^2c_1\sigmamax^2 \right)^{\frac{1}{2}} \sum_{\ell=1}^L \sqrt{m_{\ell}} \nonumber \\ &= \frac{\lambda}{\nu\sigmamin}\left(18\kappa_0(1+c_1+2\sqrt{c_1}) + 2\nu^2c_1\sigmamax^2 \right)^{\frac{1}{2}} \frac{\sqrt{m_1}}{\sqrt{c_1}-1}\left(c_1^{L/2}-1\right),\end{align} where the third inequality follows since $\mathbb{E}[\delta^2_{\ell}] \leq \kappa_0\left(m_{\ell}^{-1/2} + m_{\ell-1}^{-1/2}\right)^2$ holds from Assumption~\ref{ass:varbd}, and from further algebra (also see from the proof of Theorem~\ref{lem:distbasicineq}). Also, we know from \eqref{optgapbd} that for each $L \geq 1$, $c^T\hat{x}^L + q(\hat{x}^{L}) - z^* \leq 2\sum_{\ell=L}^{\infty} \left(\delta_\ell + \epsilon_\ell\right) \mbox{ a.s.}$, and hence, for $L \geq 1$, \begin{align}\label{optgapbdexp} \mathbb{E}\left[\left(c^T\hat{x}^L + q(\hat{x}^{L}) - z^*\right)^2\right] & \leq 4\mathbb{E}\left[\left(\lim_{n \to \infty}\sum_{\ell=L}^{n} \left(\delta_\ell + \epsilon_\ell\right)\right)^2\right] = 4\lim_{n \to \infty}\mathbb{E}\left[\left(\sum_{\ell=L}^{n} \left(\delta_\ell + \epsilon_\ell\right)\right)^2\right] \nonumber \\ & \leq 4\sum_{\ell=L}^{\infty} \mathbb{E}\left[(\delta_{\ell} + \epsilon_{\ell})^2 \right] + 8 \sum_{\ell = L}^{\infty} \left(\mathbb{E}\left[(\delta_{\ell} + \epsilon_{\ell})^2 \right]\right)^{1/2} \sum_{j = \ell+1}^{\infty} \left(\mathbb{E}\left[(\delta_{j} + \epsilon_{j})^2 \right]\right)^{1/2},\end{align} where the equality is from the monotone convergence theorem~\cite[Theorem 16.2]{1995bil}, and the last inequality follows from the repeated application of the H\"{o}lder's inequality~\cite[p. 242]{1995bil}. Let's now bound each term appearing on the right-hand side of \eqref{optgapbdexp}. Notice that \begin{align} \label{expsumsqroot} \sum_{j = \ell+1}^{\infty} \left(\mathbb{E}\left[(\delta_{j} + \epsilon_{j})^2 \right]\right)^{1/2} & \leq \sum_{j = \ell+1}^{\infty} \left(2\mathbb{E}[\delta_j^2] + 2\mathbb{E}[\epsilon_j^2] \right)^{1/2} \nonumber \\ & \leq \sum_{j = \ell+1}^{\infty} \frac{1}{\sqrt{m_j}}\left(2\kappa_0(1+c_1+2\sqrt{c_1}) + 2\nu^2\sigmamax^2 \right)^{1/2} \leq \tilde{\kappa}_1\,c_1^{-\ell/2}\end{align} where $\tilde{\kappa}_1 := \left( \frac{1}{\sqrt{m_1}}\frac{\sqrt{c_1}}{\sqrt{c_1}-1}\right)\left(2\kappa_0(1+c_1+2\sqrt{c_1}) + 2\nu^2\sigmamax^2 \right)^{1/2},$ the second inequality in \eqref{expsumsqroot} follows from Assumption~\ref{ass:varbd} and the definition $\epsilon_{\ell}:= \nu\, m_{\ell}^{-1/2}\mbox{proj}(\hat{\sigma}_{\ell},[\sigmamin,\sigmamax])$, and the last inequality follows from using the assumed sample size increase \eqref{sampsizegeoinc}. Similarly, we also get \begin{align} \label{expsumsq} \sum_{\ell = L}^{\infty} \left(\mathbb{E}\left[(\delta_{j} + \epsilon_{j})^2 \right]\right) & \leq \tilde{\kappa}_2\,c_1^{-L},\end{align} where $\tilde{\kappa}_2 := \left( \frac{1}{m_1}\frac{c_1^2}{c_1-1}\right)\left(2\kappa_0(1+c_1+2\sqrt{c_1}) + 2\nu^2\sigmamax^2\right).$ Use \eqref{expsumsqroot} and \eqref{expsumsq} in \eqref{optgapbdexp} to get:
\begin{align} \label{expsqsum} \mathbb{E}\left[\left(c^T\hat{x}^L + q(\hat{x}^{L}) - z^*\right)^2\right] &\leq 4\sum_{\ell=L}^{\infty} \mathbb{E}\left[(\delta_{\ell} + \epsilon_{\ell})^2 \right] + 8 \sum_{\ell = L}^{\infty} \left(\mathbb{E}\left[(\delta_{\ell} + \epsilon_{\ell})^2 \right]\right)^{1/2} \sum_{j = \ell+1}^{\infty} \left(\mathbb{E}\left[(\delta_{j} + \epsilon_{j})^2 \right]\right)^{1/2} \nonumber \\ &\leq 4\tilde{\kappa}_2c_1^{-L} + 8\sum_{\ell = L}^{\infty} \left(\mathbb{E}\left[(\delta_{\ell} + \epsilon_{\ell})^2 \right]\right)^{1/2}\,\tilde{\kappa}_1c_1^{-\ell/2} \nonumber \\ & \leq 4\tilde{\kappa}_2c_1^{-L} + 8(\sqrt{c_1}-1)\,\tilde{\kappa}_1^2\sum_{\ell = L}^{\infty} c_1^{-\ell}= c_1^{-L}\left(4\tilde{\kappa}_2 + \frac{8c_1\tilde{\kappa}_1^2}{\sqrt{c_1}+1} \right).
\end{align} 
Finally, we put it all together to get 
\begin{align}\label{workineq} 
\MoveEqLeft \mathbb{E}\left[\sqrt{W_L}\right]\mathbb{E}\left[\left(c^T\hat{x}^L + q(\hat{x}^{L}) - z^*\right)\right] & \nonumber \\ & \leq \frac{\lambda}{\nu\sigmamin}\left(18\kappa_0(1+c_1+2\sqrt{c_1}) + 2\nu^2c_1\sigmamax^2 \right)^{\frac{1}{2}} \frac{\sqrt{m_1}}{\sqrt{c_1}-1}\left(1 - \frac{1}{c_1^{L/2}}\right)\left(4\tilde{\kappa}_2 + \frac{8c_1\tilde{\kappa}_1^2}{\sqrt{c_1}+1} \right)^{1/2} \nonumber \\ &\leq \frac{\lambda}{\nu\sigmamin}\left(18\kappa_0(1+c_1+2\sqrt{c_1}) + 2\nu^2c_1\sigmamax^2 \right)^{\frac{1}{2}} \frac{\sqrt{m_1}}{\sqrt{c_1}-1}\left(4\tilde{\kappa}_2 + \frac{8c_1\tilde{\kappa}_1^2}{\sqrt{c_1}+1} \right)^{1/2} =: \tau_0, \nonumber \\
\end{align} 
where the first and second inequalities above follow from applying the bounds in \eqref{expsqsum} and \eqref{worksep} and simplifying. This proves the first assertion of the theorem. The second assertion follows simply from the first assertion and the assumed minimum growth rate of the objective function as expressed through Assumption~\ref{ass:fungrowth}.

\end{proof}

The following observations on Theorem~\ref{geometric} are noteworthy.

\begin{enumerate} 
\item[(a)] The assertions in Theorem~\ref{geometric} should be seen as the analogue of the $\mathcal{O}(1/\epsilon^2)$ complexity result in non-smooth convex optimization that is known to be optimal~\cite{2018nes} to within a constant factor. 
\item[(b)] The complexity result in Theorem~\ref{geometric} has been stated in the general population context. So, the result equally applies for the finite-population scenario $|\Xi| < \infty$, although there is strong evidence that in the finite and the countably infinite populations, the best achievable complexity rates may be much faster due to the existence of sharp minima of the sort discussed in~\cite{2000shahom}.  
\item[(c)] The theorem assumes that the sample size schedule $(m_{\ell})_{\ell \geq 1}$ increases geometrically with common ratio $c_1$. Importantly, the result can be generalized in a straightforward manner to a sample size schedule having a stochastic common ratio $C_1$ that is allowed to vary between two deterministic bounds $c_{0}$ and $c_h$ such that $1 < c_0 \leq c_h < \infty$ (see Section~\ref{sec:numerical}). \end{enumerate}

Recall again that the complexity result in Theorem~\ref{geometric} has been obtained assuming that the sample sizes increase geometrically, that is, $m_{\ell}/m_{\ell - 1} = c_1 \in (1,\infty)$, ignoring non-integrality. Can a similar complexity be achieved using other sample size schedules? The following negative result explains why using a slower sample size schedule is bound to result in an inferior complexity. 

\begin{theorem}\label{polynomial} Suppose Assumption~\ref{ass:boundedfeasible}--\ref{ass:fungrowth} hold. Also, suppose there exists $\tilde{\eta}$ such that 
\begin{equation}\label{deltasqlb}
\E\left[\left(\emph{\mbox{dist}}(\s{S}_{m_\ell}, \mathcal{S}^*)\right)\right] \geq \frac{\tilde{\eta}}{\sqrt{m_{\ell}}}.
\end{equation} 
If the sample size schedule is polynomial, that is, \begin{equation}\label{sampsizepolyinc} m_\ell = c_0\, \ell^p, \quad c_0 \in (0, \infty), p \in [1,\infty) \tag{SS-D}. \end{equation}Then there exists $\tau_1 > 0$ such that for $L \geq 3$,
\begin{equation}\label{workcompneg}
\E\left[\emph{\mbox{dist}}(\hat{x}^{L}, \mathcal{S}^*)\right] \geq \frac{\tau_1}{\E\left[W_L^{\frac{1}{2}-\frac{1}{2(1+p)}}\right]}.
\end{equation}
\end{theorem}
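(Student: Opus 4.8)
The plan is to bound the claimed work-complexity inequality below by decoupling it into a \emph{distance} estimate and a \emph{work} estimate, each expressed through the deterministic sample size $m_L$. Write $\rho := \tfrac{1}{2} - \tfrac{1}{2(1+p)} = \tfrac{p}{2(1+p)}$ for the target exponent. I would establish
\[
\mathbb{E}\!\left[\mbox{dist}(\hat{x}^L,\mathcal{S}^*)\right] \geq \frac{c_2}{\sqrt{m_L}} \qquad\text{and}\qquad \mathbb{E}\!\left[W_L^{\,\rho}\right] \geq c_3\,\sqrt{m_L}
\]
for positive constants $c_2,c_3$ and all $L\geq 3$, since chaining them yields $\mathbb{E}[\mbox{dist}(\hat{x}^L,\mathcal{S}^*)] \geq c_2/\sqrt{m_L} \geq c_2 c_3/\mathbb{E}[W_L^{\,\rho}] =: \tau_1/\mathbb{E}[W_L^{\,\rho}]$.

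The work estimate is the routine half and is where the exponent $\rho$ is calibrated. In each outer iteration at least $m_\ell$ second-stage LPs are solved (one evaluation of $Q^\ell_{m_\ell}$ costs $m_\ell$ LP solves and the inner loop runs at least once), so deterministically $W_L \geq \sum_{\ell=1}^L m_\ell$. Under the polynomial schedule \eqref{sampsizepolyinc}, $m_\ell = c_0\ell^p$ and an integral comparison gives $W_L \geq \tfrac{c_0}{p+1}L^{p+1}$. Raising to the power $\rho=\tfrac{p}{2(1+p)}$ converts the exponent $p+1$ into exactly $\tfrac{p}{2}$, so that $W_L^{\,\rho} \geq \text{(const)}\,L^{p/2}$; since $\sqrt{m_L}=\sqrt{c_0}\,L^{p/2}$, this is precisely $W_L^{\,\rho}\geq c_3\sqrt{m_L}$, and the bound survives taking expectations because $m_L$ is deterministic under \eqref{sampsizepolyinc}. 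The peculiar shape of $\rho$ is thus dictated by the need to cancel the $L^{p+1}$ cumulative work against the $L^{p/2}$ sampling-error floor.

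The distance estimate is the crux. The displacement hypothesis \eqref{deltasqlb} forces the sample-path solution set $\s{S}_{m_L}$ to sit at expected distance at least $\tilde{\eta}/\sqrt{m_L}$ from $\mathcal{S}^*$, encoding the unavoidable $1/\sqrt{m_L}$ statistical bias. Because $\hat{x}^L$ is $\epsilon_L$-optimal for the sample-path problem $(P_L)$ and the piecewise-linear convex objective of $(P_L)$ has positive sample-path growth rate $\gamma_L$ as defined in \eqref{sampgrowth}, the iterate lies within $\epsilon_L/\gamma_L$ of $\s{S}_{m_L}$. A reverse triangle inequality then suggests $\mathbb{E}[\mbox{dist}(\hat{x}^L,\mathcal{S}^*)] \geq \tilde{\eta}/\sqrt{m_L} - \mathbb{E}[\epsilon_L/\gamma_L]$, and since $\epsilon_L = \nu m_L^{-1/2}\mbox{proj}(\hat{\sigma}_L,[\sigmamin,\sigmamax]) \leq \nu\sigmamax/\sqrt{m_L}$, the subtracted term is itself $O(1/\sqrt{m_L})$, leaving the clean floor $(\tilde{\eta}-\nu\sigmamax\,\mathbb{E}[\gamma_L^{-1}])/\sqrt{m_L}$.

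The main obstacle lies entirely in making this last step rigorous. First, \eqref{deltasqlb} controls only the $\sup$-distance of the whole set $\s{S}_{m_L}$ from $\mathcal{S}^*$, whereas $\hat{x}^L$ is near merely \emph{some} point of $\s{S}_{m_L}$, which need not realize that supremum; the reverse triangle inequality therefore genuinely needs an $\inf$-type (minimal) displacement of the sample-path optima, available under uniqueness or regularity of the minimizers of $(P_L)$. Second, because Assumption~\ref{ass:solver} is not assumed in this theorem, the factor $\mathbb{E}[\gamma_L^{-1}]$ must be controlled separately---positivity of $\gamma_L$ follows from piecewise linearity, but a uniform-in-$L$ bound is still required---and one must verify $\tilde{\eta} > \nu\sigmamax\,\mathbb{E}[\gamma_L^{-1}]$ so that $c_2>0$. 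This positivity together with the integral comparison is what underlies the hypothesis $L\geq 3$.
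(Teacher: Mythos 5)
Your skeleton is the same as the paper's: a deterministic work bound under \eqref{sampsizepolyinc} combined with a $\tilde{\eta}/\sqrt{m_L}$ floor on the expected distance, chained through the deterministic quantity $\sqrt{m_L}$. Your work half is essentially the paper's argument verbatim --- the paper bounds $W_L \geq \sum_{\ell=1}^L c_0\ell^p \geq \frac{c_0}{p+1}\left((L-1)^{p+1}-1\right)$ and then spends the hypothesis $L\geq 3$ purely on the algebra $\left(1-L^{-1}\right)^{p+1} - L^{-(p+1)} \geq (2/3)^{p+1}-(1/3)^{p+1}$, arriving at $W_L^{p/(2(p+1))} \geq \left(\tau_p \frac{c_0}{p+1}\right)^{p/(2(p+1))}\sqrt{m_L/c_0}$; your comparison $\sum_{\ell=1}^L \ell^p \geq \int_0^L x^p\,dx = L^{p+1}/(p+1)$ achieves the same thing slightly more cleanly. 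Note, however, that $L\geq 3$ in the paper serves only this work-bound algebra; it has nothing to do with positivity of a distance floor, contrary to your closing sentence.

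The divergence is in the distance half, and here the comparison cuts both ways. The paper does not attempt the reverse-triangle-inequality argument you outline: it simply multiplies the work bound against \eqref{deltasqlb}, i.e., it uses $\E\left[\mbox{dist}(\s{S}_{m_L},\mathcal{S}^*)\right] \geq \tilde{\eta}/\sqrt{m_L}$ as if it were a lower bound on $\E\left[\mbox{dist}(\hat{x}^L,\mathcal{S}^*)\right]$, with no correction for the fact that $\hat{x}^L$ is only an $\epsilon_L$-optimal point of $(P_L)$ rather than a sup-attaining member of $\s{S}_{m_L}$. In other words, the step you flag as the crux is exactly the step the paper elides: since $\mbox{dist}(X,Y) = \sup_{x\in X}\mbox{dist}(x,Y)$, the hypothesis \eqref{deltasqlb} does not by itself lower-bound the displacement of any particular near-optimal point, and your list of what a rigorous repair requires --- an inf-type displacement of $\s{S}_{m_L}$ (or uniqueness of sample-path minimizers), a uniform-in-$L$ bound on $\gamma_L^{-1}$ in the absence of Assumption~\ref{ass:solver}, and the sign condition $\tilde{\eta} > \nu\sigmamax\,\E[\gamma_L^{-1}]$ --- is accurate, and none of it follows from the theorem's stated hypotheses. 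So judged as a self-contained proof, your proposal does not close: the constant $c_2$ is never shown to be positive. But judged against the paper, the gap you leave open is precisely the one the paper resolves by fiat, by reading \eqref{deltasqlb} as a displacement floor inherited directly by the iterate $\hat{x}^L$. Under that reading your argument completes and coincides with the paper's; under the literal set-distance reading, the theorem needs the additional hypotheses you identify, and the paper's own proof carries the same defect.
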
 
\begin{proof}  
The structure of the algorithm is such that each outer iteration consists of at least one inner iteration. Hence $\tilde{W}_{\ell} \geq m_{\ell},$ implying that 
\begin{equation}\label{totwork} 
W_L \geq \sum_{\ell =1}^L c_0 \ell^p  \geq \int_{1}^L c_0 (\ell-1)^p d\ell = \frac{c_0}{p+1}\left((L-1)^{p+1} - 1\right).
\end{equation} 
Since \eqref{sampsizepolyinc} has been assumed, $m_L = c_0L^p$ and \eqref{totwork} implies, after some algebra, that for $L\geq 3$, \begin{align}\label{totworkmore} W_L &\geq \frac{c_0}{p+1}\left(\frac{m_L}{c_0}\right)^{1+1/p}\left(\left(1-\left(\frac{c_0}{m_L}\right)^{1/p}\right)^{p+1} - \left(\frac{c_0}{m_L}\right)^{1+1/p} \right) \nonumber \\ 
&\geq \frac{c_0}{p+1}\left(\frac{m_L}{c_0}\right)^{1+1/p}\left(\left(1-L^{-1}\right)^{p+1} - L^{-(p+1)} \right) \geq \tau_p\frac{c_0}{p+1}\left(\frac{m_L}{c_0}\right)^{1+1/p}, \end{align} where $\tau_p := \left(\frac{2}{3}\right)^{p+1} - \left(\frac{1}{3}\right)^{p+1}$. Continuing from \eqref{totworkmore}, we get \begin{equation}\label{totworkfinal} W_L^{\frac{1}{2}-\frac{1}{2(1+p)}} \geq \left(\tau_p\frac{c_0}{p+1}\right)^{\frac{p}{2(p+1)}}\sqrt{\frac{m_L}{c_0}}.\end{equation}
Use \eqref{deltasqlb} and \eqref{totworkfinal} to get, for $L \geq 3$, that $\E\left[W_L^{\frac{1}{2}-\frac{1}{2(1+p)}}\right] \,\mathbb{E}\left[\mbox{dist}(\hat{x}^{L}, \mathcal{S}^*) \right] 
\geq \left(\tau_p\frac{c_0}{p+1}\right)^{\frac{p}{2(p+1)}}\frac{\tilde{\eta}}{\sqrt{c_0}}$, thus proving the assertion in the theorem.
\end{proof}

We observe from Theorem~\ref{polynomial} that no matter how large $p \in [1,\infty)$ is chosen when choosing a polynomial sample size schedule, the resulting complexity \eqref{workcompneg} is inferior to the complexity \eqref{workdistcomplexity} implied by a geometric sample size schedule, with the inferiority characterized by the deviation $(2(p+1))^{-1}$. A similar result has been proved by~\cite{2013roysze} in a different context. 

While the results of Theorem~\ref{polynomial} show the superiority of a geometric sequence for the sample size schedule, we emphasize two caveats. First, the lower bound on the (implicit) quality of the sample-path solution set may be violated in, e.g., ``non-quantitative," contexts where the underlying probability space generating the random variables naturally consists of only a finite number of outcomes. The question of what is the best sample size schedule in such contexts is open. Second, we make the obvious observation that during implementation,  considerations other than those included in our analysis, e.g., storage and wall-clock computation time limits, might influence the sample size choice. The conclusions of Theorem~\ref{geometric} and Theorem~\ref{polynomial} should thus be judged within the purview of the analysis considered here.

The condition in \eqref{deltasqlb} might appear cryptic but we believe that this condition will hold under mild conditions. General sufficient conditions under which the sequence $\sqrt{m_{\ell}} \, \mbox{dist}(S_{m_\ell}, \mathcal{S}^*)$ will ``stabilize" to a non-degenerate distribution are well-known~\cite{1993sha,1988dupwet}. Such conditions,  along with assuming the random variables $\sqrt{m_{\ell}} \, \mbox{dist}(S_{m_\ell}, \mathcal{S}^*)$ exhibit uniform integrability, will ensure that the condition in \eqref{deltasqlb} is guaranteed to hold asymptotically.

\section{STOPPING IN FINITE TIME}\label{sec:stopping}
The results we have presented thus far have implied a non-terminating algorithm, as can be seen in the listing of Algorithm~\ref{algo:adaptive-sequential-nonterminating}. Our intent in this section is to demonstrate that the iterates generated by Algorithm~\ref{algo:adaptive-sequential-nonterminating} can be stopped in finite-time while providing a solution with a probabilistic guarantee on the optimality gap. For this, we rely heavily on the finite-stopping results in~\cite{bayraksan2012fixed}. We first describe a simple stopping procedure which is almost identical to what is called FSP in~\cite{bayraksan2012fixed}, and then argue that the stipulations laid out in~\cite{bayraksan2012fixed} hold here, thereby allowing to invoke the main results of~\cite{bayraksan2012fixed}. We note that alternative finite stopping rules have also been studied in the literature, see, e.g., \cite{sen2016mitigating} for a sequential sampling based approach based on the variance associated with 2SLP solutions rather than their corresponding objective values.

Suppose we wish to stop our procedure with a solution whose optimality gap is within $\epsilon > 0$ with probability exceeding $1-\alpha$, $\alpha > 0$. Recall that upon terminating the $\ell$-th outer iteration of Algorithm~\ref{algo:adaptive-sequential}, we have at our disposal an $\mathcal{F}_{\ell}$-measurable candidate solution $\hat{x}^{\ell}$. To construct a one-sided $100(1-\alpha)$ percent confidence interval on the true gap $c^\top \hat{x}^{\ell} + q(\hat{x}^{\ell}) - z^*$, we independently generate an iid sample ${\mathcal{N}}_{\ell} = \{\tilde{\xi}^{\ell}_1, \tilde{\xi}^{\ell}_2, \ldots, \tilde{\xi}^{\ell}_{n_{\ell}}\}$. Assume that the sequence $\{n_{\ell}\}$ of ``testing" sample sizes is non-decreasing; the random objects $\tilde{\xi}^{\ell}_i, i\geq 1, \ell \geq 1$ can be re-used across iterations, that is, $\tilde{\xi}^{\ell}_i$ can be chosen so that if $i < j$ then $\tilde{\xi}^i_k = \tilde{\xi}^j_k$ for $k=1,2,\ldots, n_i$. We then use the set $\mathcal{N}_{\ell}$ to calculate a gap estimate $\tilde{G}^{\ell}_{n_\ell}(\hat{x}^{\ell})$ and sample variance $\tilde{s}^2_{n_\ell}(\hat{x}^{\ell})$ as follows: 
\begin{align}\label{gap-est}
\tilde{G}^{\ell}_{n_\ell}(\hat{x}^{\ell}) &= c^\top (\hat{x}^{\ell} - \tilde{x}^*_{\ell}) + \frac{1}{n_{\ell}}\sum_{i=1}^{n_{\ell}} [Q(\hat{x}^{\ell},\tilde{\xi}^{\ell}_i) - Q(\tilde{x}^*_{\ell},\tilde{\xi}^{\ell}_i)]; \nonumber \\
\tilde{s}^2_{n_\ell}(\hat{x}^{\ell}) &= \frac{1}{n_{\ell}}\sum_{i=1}^{n_{\ell}} \left[Q(\hat{x}^{\ell},\tilde{\xi}^{\ell}_i) - Q(\tilde{x}^*_{\ell},\tilde{\xi}^{\ell}_i) -  \frac{1}{n_{\ell}}\sum_{i=1}^{n_{\ell}} [Q(\hat{x}^{\ell},\tilde{\xi}^{\ell}_i) - Q(\tilde{x}^*_{\ell}, \tilde{\xi}^{\ell}_i)]\right]^2,
\end{align} where $\tilde{x}^*_{\ell}$ is an optimal solution to the sample-path problem $(P_{\ell})$ generated with sample $\mathcal{N}_{\ell}$, and $\delta>0$ is the thresholding constant from Algorithm~\ref{algo:adaptive-sequential-nonterminating}. 

\begin{algorithm}
\caption{An adaptive sequential SAA framework with a finite stopping criterion.}
\label{algo:adaptive-sequential}
\begin{algorithmic}[1]
\STATE{{\bf Input:} Solver-$\mathcal{A}$, a sampling policy, a constant $\nu > 0$, and a constant $\sigmamax > 0$. Set $\ell\leftarrow 0$.}
 \WHILE{$\tilde{G}^{\ell}_{n_\ell}(\hat{x}^{\ell}) + z_{\alpha} \frac{\scriptsize{\mbox{max}}(\tilde{s}_{n_\ell}(\hat{x}^{\ell}),\sigmamax)}{\sqrt{n_{\ell}}} > \epsilon$}
       \STATE{Select the sample size $m_{\ell}$ and draw a random sample $\MM_{\ell} := \{\xi^{\ell}_1,\xi^{\ell}_2,\ldots,\xi^{\ell}_{m_{\ell}}\}$.}
 	\FOR{$\innerit = 1,2,\cdots$} 	 	
 		\STATE{Use Solver-$\mathcal{A}$, e.g., the adaptive partition-based level decomposition~\cite{vanAckooij_Oliveira_Song_2016}, to execute the $\innerit$-th inner iteration for solving the sample-path problem.}
 		\STATE{If $G^{\ell,\innerit} \leq \epsilon_{\ell,t} := \nu \max\left\{\hat{\mbox{se}}_{\ell,\innerit}, \frac{\sigmamax}{\sqrt{m_\ell}}\right\}$, break the inner loop with a candidate solution $\hat{x}^\ell$.}
 	\ENDFOR
 	\STATE{Generate a Monte Carlo sample $\MN_\ell := \{\tilde{\xi}^\ell_1,\tilde{\xi}^\ell_2,\ldots, \tilde{\xi}^\ell_{n_\ell}\}$ (independent from $\MM_\ell$) of sample size $n_\ell$, solve the corresponding sample-path problem $(P_{\ell})$, and calculate $\tilde{G}^{\ell}_{n_\ell}(\hat{x}^{\ell})$ and $\tilde{s}^2_{n_\ell}(\hat{x}^{\ell})$ according to \eqref{gap-est}, respectively.}
 \ENDWHILE
\end{algorithmic}
\end{algorithm} The proposed one-sided $100(1-\alpha)$ percent confidence interval on $\mu(\hat{x}^{\ell}) = c^\top \hat{x}^{\ell} + q(\hat{x}^{\ell}) - z^*$ is
\[
\left[0,\tilde{G}^{\ell}_{n_\ell}(\hat{x}^{\ell}) + z_{\alpha} \frac{\mbox{max}(\tilde{s}_{n_\ell}(\hat{x}^{\ell}),\sigmamax)}{\sqrt{n_{\ell}}}\right],
\] where $z_{\alpha} = \Phi^{-1}(1-\alpha)$ is the $1-\alpha$ quantile of the standard normal distribution, implying that the finite-time procedure stops at iteration $$L(\epsilon) := \arginf_{\ell \geq 1}\left\{\ell: \tilde{G}^{\ell}_{n_\ell}(\hat{x}^{\ell}) + z_{\alpha} \frac{\mbox{max}(\tilde{s}_{n_\ell}(\hat{x}^{\ell}),\sigmamax)}{\sqrt{n_{\ell}}} \leq \epsilon\right\}.$$  Algorithm~\ref{algo:adaptive-sequential} lists a terminating version of Algorithm~\ref{algo:adaptive-sequential-nonterminating} (modulo setting $\sigmamin =0$) based on the proposed confidence interval. The factor $\sigmamax \, n_{\ell}^{-1/2}$ is a thresholding term that is common in sequential settings~\cite{1965chorob} and plays the same role as the term $h(n_k)$ in~\cite{bayraksan2012fixed}, ensuring that $L(\epsilon) \to \infty$ as $\epsilon \to 0$. To analyze the behavior of the coverage probability obtained from Algorithm~\ref{algo:adaptive-sequential}, the following three assumptions are made in~\cite{bayraksan2012fixed}.
\begin{enumerate} 
\item[(A1)] Event $A_{n_{\ell}} = \{\mathcal{S}_{n_{\ell}} \subseteq \mathcal{S}^*\}$ happens with probability $1$ as $\ell \to \infty$. 
\item[(A3)] $\lim_{\ell \to \infty} \mathbb{P}\left\{\sup_{x \in \mathcal{X}} |\tilde{G}^{\ell}_{n_\ell}(\hat{x}^{\ell}) - \mu(x)| > \beta \right\} = 0$ for any $\beta > 0$.
\item[(A4)] $\lim_{\ell \to \infty} \mathbb{P}\left\{\sup_{x \in \mathcal{X}} n_{\ell}^{-1/2}\mbox{max}(\tilde{s}_{n_\ell}(\hat{x}^{\ell}),\sigmamax) > \beta \right\} = 0$ for any
$\beta > 0$. \end{enumerate} (We have omitted (A2) above to preserve the numbering in~\cite{bayraksan2012fixed}.) Theorem 2.3 in~\cite{2000shahom} implies that Assumption (A1) is satisfied if the support $\Xi$ is finite, in addition to Assumptions~\ref{ass:boundedfeasible}--\ref{ass:fungrowth}. Also, it is seen that Assumption (A3) and (A4) hold if the standing Assumption~\ref{ass:varbd} holds. The following result characterizes the behavior of the iterates obtained from Algorithm~\ref{algo:adaptive-sequential}, along with a probabilistic guarantee. We provide a proof only for the third part of the theorem since proofs for the rest either follow trivially or are almost identical to that in~\cite{bayraksan2012fixed}.

\begin{theorem}\label{thm:stopping} Suppose Assumptions~\ref{ass:boundedfeasible}--\ref{ass:fungrowth} hold. Furthermore, let $|\Xi| < \infty$.  Let $m_{\ell}$ and $n_{\ell}$ be positive nondecreasing sequences such that $m_{\ell} \to \infty$ and $n_{\ell} \to \infty$ as $\ell \to \infty$. Then the following assertions hold. 
\begin{enumerate} 
\item $L(\epsilon) < \infty$ a.s. for all $\epsilon>0$  and $L(\epsilon) \to \infty$ a.s. as $\epsilon \to 0$.
 \item Recalling the optimality gap $\mu(x) := c^\top x + q(x) - z^*$, 
 \begin{equation}\label{probguarantee}
 \lim_{\epsilon \to 0} \mathbb{P}\left\{\mu(\hat{x}^{L(\epsilon)}) \leq \epsilon \right\} = 1.
 \end{equation} 
 \item Suppose $\{n_{\ell}\}$ is chosen so that $\liminf_{\ell \to \infty} n_{\ell-1}/n_{\ell} >0$. Then we have that $$\lim_{\epsilon \to 0^+} \epsilon^{2} n_{L(\epsilon)} = O(1).$$ 
 \end{enumerate} 
 \end{theorem}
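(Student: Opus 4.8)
The plan is to prove the matching two-sided estimate $(z_\alpha\sigmamax)^2 \le \epsilon^2 n_{L(\epsilon)} \le B^2/\rho$ for all sufficiently small $\epsilon$, from which $\epsilon^2 n_{L(\epsilon)} = O(1)$ follows at once. Throughout, write $T_\ell := \tilde{G}^{\ell}_{n_\ell}(\hat{x}^{\ell}) + z_\alpha\, n_\ell^{-1/2}\max(\tilde{s}_{n_\ell}(\hat{x}^{\ell}),\sigmamax)$ for the stopping statistic, so that $L(\epsilon) = \arginf_{\ell\ge 1}\{\ell : T_\ell \le \epsilon\}$. The lower bound is immediate: since $\tilde{G}^{\ell}_{n_\ell}(\hat{x}^{\ell})\ge 0$ (because $\tilde{x}^*_\ell$ is optimal for the testing sample-path problem) and $\max(\cdot,\sigmamax)\ge\sigmamax$, the stopping inequality $T_{L(\epsilon)}\le\epsilon$ yields $z_\alpha\sigmamax\,n_{L(\epsilon)}^{-1/2}\le\epsilon$, i.e. $\epsilon^2 n_{L(\epsilon)}\ge (z_\alpha\sigmamax)^2$. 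This also re-confirms part~1, namely $L(\epsilon)\to\infty$ as $\epsilon\to 0$.

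For the upper bound I would exploit the first-passage structure: by minimality of $L(\epsilon)$, the criterion is still violated one step earlier, $T_{L(\epsilon)-1} > \epsilon$. Multiplying by $\sqrt{n_{L(\epsilon)-1}}$ gives $\epsilon\sqrt{n_{L(\epsilon)-1}} < \sqrt{n_{L(\epsilon)-1}}\,T_{L(\epsilon)-1}$, so the entire problem reduces to showing that the normalized statistic $\sqrt{n_\ell}\,T_\ell$ stays bounded (in the appropriate stochastic sense, see below) as $\ell\to\infty$, say $\limsup_\ell \sqrt{n_\ell}\,T_\ell \le B$. Granting this, $\epsilon^2 n_{L(\epsilon)-1}\le B^2$ for all small $\epsilon$; invoking the hypothesis $\rho := \liminf_\ell n_{\ell-1}/n_\ell > 0$ (so that $n_{L(\epsilon)}\le \rho^{-1} n_{L(\epsilon)-1}$ for large $\ell$), I obtain $\epsilon^2 n_{L(\epsilon)} \le B^2/\rho = O(1)$.

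It therefore remains to bound $\sqrt{n_\ell}\,T_\ell$, which splits into the thresholding part $z_\alpha\max(\tilde{s}_{n_\ell},\sigmamax)$ and the gap part $\sqrt{n_\ell}\,\tilde{G}^{\ell}_{n_\ell}$. The thresholding part is controlled by Assumption~\ref{ass:varbd}: $\tilde{s}^2_{n_\ell}$ converges a.s. to a finite limit, so $\max(\tilde{s}_{n_\ell},\sigmamax)$ is a.s. bounded. For the gap part I would use the decomposition
\[
\tilde{G}^{\ell}_{n_\ell} = \big(\mu(\hat{x}^{\ell}) - \mu(\tilde{x}^*_\ell)\big) + \big(\bar{\epsilon}^{\mathrm{test}}_{n_\ell}(\hat{x}^{\ell}) - \bar{\epsilon}^{\mathrm{test}}_{n_\ell}(\tilde{x}^*_\ell)\big),
\]
where $\bar{\epsilon}^{\mathrm{test}}_{n_\ell}(x) := n_\ell^{-1}\sum_i Q(x,\tilde{\xi}^{\ell}_i) - q(x)$. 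Using $|\Xi|<\infty$ together with (A1), eventually $\tilde{x}^*_\ell\in\mathcal{S}_{n_\ell}\subseteq\s{S}$, so $\mu(\tilde{x}^*_\ell)=0$; and, since in the finite-support regime these are running sample means of a fixed scenario sequence, $\sqrt{n_\ell}\,[\bar{\epsilon}^{\mathrm{test}}_{n_\ell}(\hat{x}^{\ell}) - \bar{\epsilon}^{\mathrm{test}}_{n_\ell}(\tilde{x}^*_\ell)]$ is bounded in $L^2$ (hence tight) by the finite-variance Assumption~\ref{ass:varbd}. Thus everything is reduced to controlling the single term $\sqrt{n_\ell}\,\mu(\hat{x}^{\ell})$.

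This last term is the main obstacle, and it is exactly where the hypothesis $|\Xi|<\infty$ is indispensable. The true gap at the iterate is driven by the training size $m_\ell$ and the adaptive tolerance $\epsilon_\ell$ --- from the $\epsilon_\ell$-optimality of $\hat{x}^{\ell}$, Lemma~\ref{lem:prox}(b) and $\epsilon_\ell = O(m_\ell^{-1/2})$ give $\mu(\hat{x}^{\ell}) = O(m_\ell^{-1/2})$ --- whereas the normalization is by the testing size $n_\ell$, so naively $\sqrt{n_\ell}\,\mu(\hat{x}^{\ell}) = O(\sqrt{n_\ell/m_\ell})$. In the discrete setting, however, sharp minima force the sample-path solution sets into $\s{S}$ at a geometric rate (the same mechanism underlying (A1), cf.~\cite{2000shahom}); once $\epsilon_\ell$ drops below the objective gap separating $\s{S}$ from the next-best vertex, the sampling contribution to $\mu(\hat{x}^{\ell})$ is suppressed and $\mu(\hat{x}^{\ell})$ is governed by $\epsilon_\ell$ and the growth constant of Assumption~\ref{ass:fungrowth} alone, making $\sqrt{n_\ell}\,\mu(\hat{x}^{\ell})$ bounded. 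Establishing this suppression rigorously --- equivalently, that the spurious $\sqrt{n_\ell/m_\ell}$ growth cannot persist under $|\Xi|<\infty$ --- is the one delicate step; the remaining manipulations are routine, and the presence of $\bar{\epsilon}^{\mathrm{test}}_{n_\ell}$ also clarifies that the bound is naturally read in probability (an a.s. version would pick up an extra $\sqrt{\log\log n_\ell}$ factor from the law of the iterated logarithm).
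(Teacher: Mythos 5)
Your scaffolding---the two-sided estimate, the first-passage step ($T_{L(\epsilon)-1} > \epsilon$ by minimality of $L(\epsilon)$), and the use of $\liminf_{\ell} n_{\ell-1}/n_{\ell} > 0$ to pass from $n_{L(\epsilon)-1}$ to $n_{L(\epsilon)}$---coincides with the paper's proof, and your lower bound $\epsilon^2 n_{L(\epsilon)} \geq (z_\alpha\sigmamax)^2$ is correct. But your upper bound rests entirely on the claim $\limsup_{\ell} \sqrt{n_\ell}\, T_\ell \leq B$, and this is exactly the step you leave unproven (``establishing this suppression rigorously \ldots is the one delicate step''). That step is not a routine remainder; it \emph{is} the theorem. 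Moreover, the route you sketch cannot deliver it: even granting that $\mu(\hat{x}^{\ell})$ is eventually governed by the tolerance $\epsilon_\ell$ and the growth constant of Assumption~\ref{ass:fungrowth}, you only obtain $\mu(\hat{x}^{\ell}) = O(m_\ell^{-1/2})$, hence $\sqrt{n_\ell}\,\mu(\hat{x}^{\ell}) = O(\sqrt{n_\ell/m_\ell})$---and Theorem~\ref{thm:stopping} assumes \emph{no} relation between the testing sizes $n_\ell$ and the training sizes $m_\ell$ (they are merely nondecreasing and divergent), so this ratio can diverge. A bound on the true gap of order $m_\ell^{-1/2}$, however sharp, is the wrong currency for a statement normalized by $\sqrt{n_\ell}$.

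The paper closes this gap with a qualitatively stronger fact imported from Lemma 5 of \cite{bayraksan2012fixed}, and this is precisely where $|\Xi| < \infty$ and (A1) do their work: for all sufficiently small $\epsilon$ the estimators vanish \emph{identically} at stopping, $\tilde{G}^{L(\epsilon)}_{n_{L(\epsilon)}}(\hat{x}^{L(\epsilon)}) = 0$ and $\tilde{s}^2_{n_{L(\epsilon)}}(\hat{x}^{L(\epsilon)}) = 0$ as in \eqref{gapvanish} (and likewise one iteration before stopping)---not merely $O(n_\ell^{-1/2})$. Under finite support and sharp minima, the candidate and testing solutions eventually land in $\s{S}$, and a gap/variance estimator computed between points of $\s{S}$ is exactly zero, scenario by scenario. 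With the estimators exactly zero, the stopping statistic equals $z_\alpha \delta/\sqrt{n_\ell}$ exactly, so both inequalities in \eqref{sampbds} collapse to statements about $n_\ell$ alone, and no comparison between $n_\ell$ and $m_\ell$ is ever needed. (The same mechanism is what makes your treatment of the thresholding term rigorous; Assumption~\ref{ass:varbd} alone does not give a.s. convergence of $\tilde{s}^2_{n_\ell}$.) To complete your argument you would have to prove this exact-vanishing property---which is what your sketch gestures at but does not establish---at which point you would have reproduced the paper's proof rather than found an alternative to it.
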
 
 \begin{proof} (Proof of 3.) Following the proof of Lemma 5 in~\cite{bayraksan2012fixed}, we see that there exists $\epsilon_0 > 0$ such that for all $0 < \epsilon < \epsilon_0$, 
\begin{equation}\label{gapvanish}\tilde{G}^{L(\epsilon)}_{n_{L(\epsilon)}}(\hat{x}^{L(\epsilon)}) = 0; \quad \tilde{s}^2_{n_{L(\epsilon)}}(\hat{x}^{L(\epsilon)}) = 0,\end{equation}
where $\tilde{G}^{L(\epsilon)}_{n_{L(\epsilon)}}(\hat{x}^{L(\epsilon)})$ and $\tilde{s}^2_{n_{L(\epsilon)}}(\hat{x}^{L(\epsilon)})$ are from \eqref{gap-est} at stopping.
 According to the stopping criterion of Algorithm~\ref{algo:adaptive-sequential}, we have that:
\begin{align}\label{sampbds}
\epsilon^2 n_{L(\epsilon)} & \geq \left(\sqrt{n_{L(\epsilon)}}\tilde{G}^{L(\epsilon)}_{n_{L(\epsilon)}}(\hat{x}^{L(\epsilon)}) + z_{\alpha} \mbox{max}(\tilde{s}_{n_{L(\epsilon)}}(\hat{x}^{L(\epsilon)}),\delta) \right)^2; \nonumber \\ \epsilon^2 n_{L(\epsilon)-1} & \leq \left(\sqrt{n_{L(\epsilon)-1}}\tilde{G}^{L(\epsilon)-1}_{n_{L(\epsilon)-1}}(\hat{x}^{L(\epsilon)-1}) + z_{\alpha} \mbox{max}(\tilde{s}_{n_{L(\epsilon)-1}}(\hat{x}^{L(\epsilon)-1}),\delta) \right)^2.
\end{align} Now notice that since $\liminf_{\ell \to \infty} n_{\ell-1} / n_{\ell} > 0$ and $L(\epsilon) \to \infty$ as $\epsilon \to 0$ a.s., there exists $\tilde{\beta} > 0$ such that for small enough $\epsilon$, we have \begin{equation}\label{succsampsizerel} n_{L(\epsilon) - 1} \geq \tilde{\beta} \, n_{L(\epsilon)} \mbox{ a.s.}\end{equation} 
Using \eqref{succsampsizerel}, \eqref{sampbds}, and \eqref{gapvanish}, we get, a.s., $z_{\alpha} \delta^2 \leq \lim_{\epsilon \to 0^+} \frac{n_{L(\epsilon)}}{1/\epsilon^2} \leq \frac{z_{\alpha}}{\tilde{\beta}} \, \delta^2$.
\end{proof}

It is worth noting that the main probabilistic guarantee appearing in \eqref{probguarantee} is stronger than classical guarantees in sequential testing such as those in~\cite{1965chorob}. This deviation from a classical stopping result is primarily because of the fast convergence assured by (A1). It is possible and likely that when (A1) is relaxed, a more classical result such as what one encounters in~\cite{1965chorob} holds, but we are not aware of the existence of such a result.  

The condition $\liminf_{\ell \to \infty} n_{\ell-1} / n_{\ell} > 0$ stipulated by the third assertion of Theorem~\ref{thm:stopping} is satisfied by a wide variety of sequences. For instance, if $q_0, q_1 \in (0,\infty)$, any logarithmic increase schedule $n_{\ell} = q_0 + q_1\log \ell$, any polynomial increase schedule $n_{\ell} = q_0 + q_1 \ell^p, p \in (0,\infty)$, and any geometric increase schedule $n_{\ell}/n_{\ell-1} = q_1$ satisfy the condition $\liminf_{\ell \to \infty} n_{\ell-1} / n_{\ell} > 0$.

\section{COMPUTATIONAL EXPERIMENTS}\label{sec:numerical}
In this section, we present computational results of the proposed adaptive sequential sampling framework for solving 2SLPs with fixed recourse and fixed second-stage objective coefficients. We chose problems instances of this type to enable a ``warm starting" procedure, where the initial solution and an initial second-stage value function approximation for every sample-path problem at each outer iteration can be obtained using information gained from previous iterations. (This procedure is summarized in Algorithm C.1 in the appendix of the online supplementary document~\cite{online}.) For the purpose of benchmarking, we consider finite-sample instances of such problems, that is, problems where $|\Xi| < \infty$, so that we get access to the true optimal value $z^*$ up to a pre-specified precision by solving these instances using a deterministic solver. In particular, we apply the adaptive partition-based level decomposition method~\cite{vanAckooij_Oliveira_Song_2016}, which has shown to be a competitive state-of-the-art solution approach. Five finite-sample instances of each problem in a selected problem class are generated; $20$ replications of each competing sequential SAA algorithm are performed on each of the generated problem instances (except for the ssn instances, where only $10$ replications are performed due to the extensive computational effort for solving these instances). We implemented all algorithms in C++ using the commercial solver CPLEX, version 12.8. All tests are conducted on an iMac desktop with four 4.00GHz processors and 16Gb memory. The number of threads is set to be one.

We run the adaptive sequential SAA framework according to Algorithm~\ref{algo:adaptive-sequential}, and record the total number of outer iterations as $L$, the final candidate solution at the $L$-th iteration as $\hat{x}^L$, and the sample size used in the final iteration $L$ as $N_L$; $c^\top \hat{x}^L + q(\hat{x}^L)$ then gives the true objective value of final candidate solution $\hat{x}^L$. We report in column ``CI'' the ratio between the width of the reported confidence interval (at stopping) for the optimality gap and the true objective value corresponding to $\hat{x}^L$. The threshold $\epsilon$ is chosen to be small enough relative to the objective value corresponding to the candidate solution obtained from the outer iteration, e.g., $10^{-3}\times \left(c^\top \hat{x}^1 + Q^1_{m_1}(\hat{x}^1)\right)$. After Algorithm~\ref{algo:adaptive-sequential} terminates with a final solution $\hat{x}^L$, we verify whether or not the true optimal objective value $z^*$ is in the reported confidence interval. Since the confidence interval at stopping is guaranteed to cover $z^*$ only asymptotically (see Theorem~\ref{thm:stopping}), we report the coverage probability at stopping in the column titled ``cov.'', using results obtained from the $20$ replications for each test instance except  ssn and 20term, where $10$ replications are used. 

We set the sample size $m_\ell$ for the $\ell$-th sample-path problem to be twice as large as the sample size $n_\ell$ for validating the quality of candidate solution $\hat{x}^\ell$, i.e., $m_\ell = 2\times n_\ell, \ \forall \ell = 1,2,\ldots$. This choice is motivated by the practical guideline~\cite{bayraksan2011sequential} that the computational effort expended to find candidate solutions should be higher than that expended to compare candidate solutions. The following additional notation is used in the tables that follow.
\begin{itemize}
\item Time: computational time (recorded in seconds)
\item $M$: total number of inner iterations.
\item $L$: total number of outer iterations.
\item $n_L$: the sample size used in the final outer iteration $L$.
\end{itemize}

\subsection{Implementation details}
The following five algorithms are implemented in our computational study. The procedures described in (iii), (iv), and (v) use Algorithm~\ref{algo:adaptive-sequential} with different sample size schedules. The procedure listed in (i) has been shown to be very competitive recently; the procedure in (ii) is proposed in~\cite{bayraksan2012fixed}.
\begin{itemize}
\item[(i)]  {\bf PILD-ODA}. This algorithm is the adaptive partition-based level decomposition algorithm with on-demand accuracy as proposed in~\cite{vanAckooij_Oliveira_Song_2016}, which is used to solve each instance with the full set of scenarios up to a relative optimality gap of $10^{-4}$. Note that $z^*$ for each instance is also obtained by this algorithm using a smaller relative optimality gap threshold of $10^{-6}$.

\item[(ii)] {\bf Sequential-BP-L($\Delta$)}. This algorithm follows the sampling schedules in~\cite{bayraksan2012fixed} while solving individual sample-path problems to high precision. Specifically, each sample-path problem (with a sample size of $m_\ell$) is solved up to a relative optimality gap of $10^{-6}$ in each outer iteration $\ell$, using a standard level decomposition approach for solving 2SLPs~\cite{fabian2007solving}. Note that our implementation of this approach does not incorporate the warm starting functionality. The obtained candidate solution $\hat{x}^\ell$ is then evaluated using a sample of size $n_\ell$. To obtain $x^*_{n_\ell}$ that appears in $\tilde{G}^\ell_{n_\ell}$ and $\tilde{s}^2_{n_\ell}$ in \eqref{gap-est}, we solve the corresponding sample-path problem up to a relative optimality gap of $10^{-4}$, as suggested by \cite{bayraksan2012fixed}. By default, we use a linear sample size schedule where $\Delta = 100$ additional scenarios are sampled from one iteration to the next, starting with an initial sample size $m_1 = 2\times n_1 = 100$. We use the same initial sample size for all variants of the sequential sampling approaches that we describe below, although one may tune this parameter for further enhancements.

\item[(iii)] {\bf Adaptive-seq-BP-L($\Delta$)}. This is Algorithm~\ref{algo:adaptive-sequential} implemented with the linearly increasing sample size schedule proposed in~\cite{bayraksan2012fixed}, that is, $m_{\ell+1} = m_{\ell} + \Delta$. For ``warm starting" the initial solution and an initial second-stage value function approximation for every sample-path problem at each outer iteration, we use Algorithm C.1 in the appendix of the online supplementary document~\cite{online}. We use parameter $\alpha = 0.1$ and safeguard parameter $\delta = 10^{-5}$ in defining the adaptive optimality tolerance $\epsilon_\ell$ according to \eqref{innerterm}. PILD-ODA is applied to solve each sample-path problem with the aforementioned warm starting functionality. 

\item[(iv)] {\bf Adaptive-seq-fixed ($c_1$)}. This is Algorithm~\ref{algo:adaptive-sequential} implemented with a geometric sample size schedule. The setting is nearly identical to (iii) except that we use a fixed rate $c_1$ as the geometric increase rate, that is, $m_{\ell+1} = c_1\, m_{\ell}$. 

\item[(v)] {\bf Adaptive-seq-dyn($c_0,c_h$)}. Like in (iv), this is Algorithm~\ref{algo:adaptive-sequential} implemented with a geometric sample size schedule ensuring that $m_{\ell+1} = C_1\, m_{\ell}$. However, unlike in (iv), the rate $C_1$ is dynamic (and hence, listed in uppercase) within chosen bounds $c_0, c_h$.  Specifically, starting from some initial value of $C_1$, if the inner loop finishes after a single iteration, implying that the problem with the current sample size does not deviate much from the one solved in the previous outer iteration, we increase the deviation of $C_1$ from $1$ by a factor of $2$ subject to $C_1$ not exceeding $c_h$. Formally, we set $C_1 \leftarrow \mbox{min}(2C_1 - 1,c_h)$. If, on the other hand, the inner loop takes more than four iterations, we shrink the deviation of $C_1$ from $1$ by a factor of $2$, subject to $C_1$ reaching a minimum of $c_0$, that is, we set $C_1 \leftarrow \mbox{max}(c_0,\frac{1}{2}C_1 + \frac{1}{2})$. While our theory does not explicitly cover this ``dynamic $C_1$" context, an extension of our theory to this case is straightforward. See comment (c) appearing after Theorem~\ref{geometric}.   

\end{itemize}

In all algorithms that we tested except ``PILD-ODA,'' we use a time limit of two hours ($7200$ seconds). When the stopping criterion is not met by the time limit, we report the smallest value $\tilde{G}^\ell_{n_\ell}(\hat{x}^\ell) + z_\alpha \frac{\mbox{\scriptsize{max}}(\tilde{s}_{n_\ell}(\hat{x}^\ell),\sigma_{\max})}{\sqrt{n_\ell}}$ encountered during all completed outer iterations $\ell$, and accordingly consider this quantity the width of the confidence interval on the optimality gap of $\hat{x}^{\ell}$. The profiles of test instances used in our computational experiments are summarized in Table~\ref{testinstances}, where the set of DEAK instances are randomly generated test instances from~\cite{deak2011testing}, and other instances are taken from existing literature that are linked to certain ``real-world'' applications. For the purpose of benchmarking, we also create an additional family of instances based on the DEAK instances by increasing the variance of the underlying random variables generating the test instances. We use ``High'' to label this new set of DEAK instances with higher variance in Table~\ref{initialtest},~\ref{testproposed1}, and~\ref{testproposed2}.
\begin{table}\label{testinstances}
\caption{Profiles of test instances from the literature. Notation $(n_a,n_b)$ means that the number of variables is given by $n_a$ and the number of constraints is given by $n_b$.}\label{instance-size}
{\footnotesize 
\begin{center}
\begin{tabular}{c|c|c|c}
    \hline\noalign{\smallskip}
    Instance & First-stage size & Second-stage size & Reference\\
    \noalign{\smallskip}\hline\noalign{\smallskip}
    DEAK40$\times$20 & (40,20) & (30,20) & \cite{deak2011testing} \\
    DEAK40$\times$40 & (40,20) & (60,40) & -\\
    DEAK40$\times$60 & (40,20) & (90,60) & -\\
    DEAK60$\times$20 & (60,30) & (30,20) & -\\
    DEAK60$\times$40 & (60,30) & (60,40) & - \\
    DEAK60$\times$60 & (60,30) & (90,60) & -\\
    LandS & (4,2) & (12,7) & \cite{LandS} \\
    gbd & (17,4) & (10,5) & \cite{ferguson1956allocation} \\
    4node & (52,14) & (186,74) & \cite{Ariyawansa_Felt_2004} \\
    pgp2 & (4,2) & (16,7) & \cite{higle2013stochastic} \\
    retail & (7,0) & (70,22) & \cite{herer2006multilocation} \\
    cep & (8,5) & (15,7) & \cite{higle2013stochastic}\\
    baa99-20 & (20,0) & (250,40) & \cite{sen2016mitigating}\\ 
    20-term & (63,3) & (764,124) & \cite{mak1999monte} \\
    ssn & (89,1) & (706,175) & \cite{sen1994network} \\
    \noalign{\smallskip}\hline
\end{tabular}
\end{center}
}
\end{table}

\subsection{Numerical results}
We first investigate the empirical performance of ``Sequential-BP-L($\Delta$)'', and its adaptation ``Adaptive-seq-BP-L($\Delta$)'' into our proposed framework, against ``PILD-ODA'' which is arguably a state-of-the-art approach for solving 2SLPs with fixed recourse and fixed second-stage objective coefficients using the full set of scenarios~\cite{vanAckooij_Oliveira_Song_2016}.  Table~\ref{initialtest} summarizes the results on our test instances. We recall that for all the sequential SAA approaches, the numbers shown in each row are calculated by taking the average of the corresponding values over $20$ replications ($10$ replications for ssn and 20term) of algorithm instantiation on five finite-sample instances.

\subsubsection{Computational results on the DEAK instance family} 
We first present the performance of aforementioned algorithms on the DEAK instance family. Instances within this family share the same structure and vary by the problem sizes in terms of the number of variables and constraints. Experiments on these different instances allow us to see how the algorithms behave as the problem sizes change given the same underlying problem structure.

From Table~\ref{initialtest}, we see that sequential SAA algorithms ``Sequential-BP-L($100$)'' and ``Adaptive-seq-BP-L($100$)'' are clearly favored over the direct approach ``PILD-ODA.''  The sequential SAA approaches finish in much less computational time at a low price in terms of optimality gap --- around 0.1\%. The coverage probabilities of these approaches are also satisfactory. The majority of the computational savings come from the fact that sequential SAA approaches expend much less effort in each inner iteration, since only a (small) sample is taken at each early outer iteration $\ell$.

In comparing ``Sequential-BP-L($\Delta$)'' against ``Adaptive-seq-BP-L($\Delta$),'' notice from Table~\ref{initialtest} that the computational time for ``Adaptive-seq-BP-L($\Delta$)'' is lower in most cases, while the total number of outer iterations $L$, inner iterations $M$, and the final sample size $n_L$ are similar. This is again explainable since in ``Sequential-BP-L,'' the sample-path problems in each outer iteration are solved to a high precision, whereas in ``Adaptive-seq-BP-L($\Delta$),'' the sample-path problems are only solved up to a factor of the sampling error as detailed in Algorithm \ref{algo:adaptive-sequential}. Furthermore, a warm start functionality and an adaptive scenario aggregation technique are leveraged in ``Adaptive-seq-BP-L($\Delta$),'' by using Algorithm C.1 in the appendix of the online supplementary document~\cite{online} and PILD-ODA~\cite{vanAckooij_Oliveira_Song_2016}, respectively.


\begin{table}\label{initialtest}
\caption{Computational results of the adaptive partition-based level decomposition approach \cite{vanAckooij_Oliveira_Song_2016} (``PILD-ODA''), the sequential sampling procedure by \cite{bayraksan2012fixed} (``Sequential-BP-L''), and Algorithm \ref{algo:adaptive-sequential} with the stopping criterion and sample size schedule proposed in \cite{bayraksan2012fixed} (``Adaptive-seq-BP-L (100)'') on our test instances DEAK and DEAK-H.}
\begin{footnotesize}
\begin{tabular}{lllllllllllll}
\hline
Ins & $N$ & & \multicolumn{2}{c}{PILD-ODA}  & &     \multicolumn{3}{c}{Sequential-BP-L($100$)}  & & \multicolumn{3}{c}{Adaptive-seq-BP-L($100$)}  \\
 \cline{4-5} \cline{7-9} \cline{11-13}
                           &    &  & Time & M           &     & Time       & $M(L,n_L)$ & CI (cov.) &   & Time       & $M(L,n_L)$ & CI (cov.)    \\
                      \cline{4-13}
\multirow{3}{*}{40x20} & 50K &  & 53.4 & 19 & & 5.4 & 14(5,1070) & (0.1,97)& & 1.5 & 20(5,1094) & (0.1,97)\\
 & 100K &  & 101.8 & 18 & & 5.1 & 13(5,1032) & (0.1,99) & & 1.3 & 19(5,1014) & (0.1,97)\\
\hline
\multirow{3}{*}{40x40} & 50K &  & 74.6 & 12  & & 4.3 & 19(3,584) & (0.0,83) & & 1.2 & 12(3,630) & (0.1,80)\\
 & 100K &  & 134.1 & 12 & & 5.6 & 20(3,660) & (0.1,90) & & 1.3 & 13(3,676) & (0.1,82) \\
\hline
\multirow{3}{*}{40x60} & 50K &  & 206.2 & 19  & & 4.3 & 20(2,374) & (0.1,96)  & & 1.7 & 21(2,396) & (0.1,100)\\
 & 100K &  & 413.1 & 20 & & 4.1 & 20(2,360) & (0.1,99) & & 1.6 & 21(2,366) & (0.1,100) \\
\hline
\multirow{3}{*}{60x20} & 50K &  & 114.4 & 56  & & 86.1 & 41(13,2540) & (0.1,100) & & 18.5 & 64(13,2596) & (0.1,100)\\
 & 100K &  & 252.2 & 60 & & 87.8 & 42(13,2584) & (0.1,100) & & 19.1 & 64(13,2636) & (0.1,100)\\
\hline
\multirow{3}{*}{60x40} & 50K &  & 502.0 & 65 & & 23.2 & 32(4,824) & (0.1,100) & & 12.3 & 70(4,834) & (0.1,100)\\
 & 100K &  & 929.4 & 67  & & 25.1 & 33(4,864) & (0.1,100) & & 13.5 & 70(4,876) & (0.1,100)\\
\hline
\multirow{3}{*}{60x60} & 50K &  & 333.8 & 24 & & 5.9 & 22(2,414) & (0.1,100) & & 2.2 & 25(2,424) & (0.1,100)\\
 & 100K &  & 622.3 & 24  & & 6.5 & 22(2,436) & (0.1,100) & & 2.3 & 25(2,436) & (0.1,100)\\
\hline
\hline
\multirow{1}{*}{40x20} & 50K &  & 63.9 & 17 & & 18.6 & 27(9,1776) & (0.1,96) & & 4.4 & 23(8,1698) & (0.1,98)\\
High & 100K &  & 139.2 & 18 & & 18.2 & 27(9,1772) & (0.1,96) & & 5.3 & 24(9,1854) & (0.1,95) \\
\hline
\multirow{1}{*}{40x40} & 50K &  & 58.9  & 9  & & 4.5 & 17(3,580) & (0.0,83) & & 1.3 & 10(3,640) & (0.0,70)\\
High & 100K &  & 117.0 & 9 & & 4.0 & 17(3,556) & (0.1,88) & & 1.3 & 10(3,646) & (0.1,80)\\
\hline
\multirow{1}{*}{40x60} & 50K &  & 711.5 & 25  & & 60.8 & 42(6,1140) & (0.1,99) & & 22.4 & 29(6,1132) & (0.1,93)\\
High & 100K &  & 1520.0 & 24  & & 55.5 & 41(6,1102) & (0.1,100) & & 20.8 & 29(6,1130) & (0.1,93) \\
\hline
\multirow{1}{*}{60x20} & 50K &  & 162.6 & 46 & & 139.4 & 53(16,3194) & (0.1,99) & & 78.9 & 52(16,3280) & (0.1,100)\\
High & 100K &  & 263.2 & 43 & & 132.9 & 54(16,3160) & (0.1,100) & & 73.5 & 52(16,3230) & (0.1,100) \\
\hline
\multirow{1}{*}{60x40} & 50K &  & 432.8 & 31 & & 112.6 & 55(9,1824) & (0.1,99) & & 127.4 & 42(10,1920) & (0.1,98)\\
High & 100K &  & 958.5 &32  & & 124.0 & 56(9,1834) & (0.1,99) & & 122.4 & 42(10,1940) & (0.1,98) \\
\hline
\multirow{1}{*}{60x60} & 50K &  & 673.5 & 23 & & 96.6 & 48(6,1290) & (0.1,100) & & 38.2 & 31(6,1282) & (0.1,90) \\
High & 100K &  & 1591.9 & 25  & & 107.2 & 49(7,1316) & (0.1,96) & & 42.3 & 31(7,1362) & (0.1,89)\\
\hline
\end{tabular}
\end{footnotesize}
\end{table}

Table~\ref{initialtest} provides clear evidence of the effectiveness of the sequential SAA framework and the use of warm starts. In an attempt to investigate the effect of geometric sampling schemes, which assuredly preserve the Monte Carlo canonical rate by Theorem~\ref{geometric}, we next compare in Table~\ref{testproposed1} the computational results of the adaptive sequential SAA with a geometric sample size schedule having a fixed increase rate $c_1=1.5$ (option ``Adaptive-seq-fixed($1.5$)'') against a dynamically chosen geometric increase rate with $c_0 = 1.05, c_h = 3$ and $C_1$ starting at $1.5$ (option ``Adaptive-seq-dyn($1.05,3$)''), when employed with a finite-time stopping criterion. We see that similar results are obtained by the two alternative options in terms of the computational time. ``Adaptive-seq-dyn($1.05,3$)'' exhibits slightly fewer inner and outer iterations, whereas the sample sizes seem significantly larger. Also, comparing Table~\ref{initialtest} against Table~\ref{testproposed1}, it seems clear that a geometrically increasing sample size schedule results in a large sample size at stopping but generally fewer outer iterations than the linear increasing rate employed in ``Adaptive-seq-BP-L''. In ``Adaptive-seq-dyn,'' the sample size at stopping is even larger, but the number of outer iterations and the number of inner iterations are reduced, leading to less computational time in general. All options share similar behavior from the standpoint of the width of the confidence interval and its coverage.

\begin{table}\label{testproposed1}
\caption{Computational results of the adaptive partition-based level decomposition approach \cite{vanAckooij_Oliveira_Song_2016} - ``PILD-ODA'', Algorithm \ref{algo:adaptive-sequential} with a fixed increasing rate ``Adaptive-seq-fixed($1.5$),'' and Algorithm \ref{algo:adaptive-sequential} with a dynamic increase rate ``Adaptive-seq-dyn($1.05,3$),'' and with $C_1$ starting at $1.5$ on our test instances DEAK and DEAK-H. }\label{table:4}
\begin{footnotesize}
\begin{tabular}{lllllllllllll}
\hline
Ins & $N$ & & \multicolumn{2}{c}{PILD-ODA}  & & \multicolumn{3}{c}{Adaptive-seq-fixed($1.5$)}      &       & \multicolumn{3}{c}{Adaptive-seq-dyn($1.05,3$)}    \\
 \cline{4-5} \cline{7-9} \cline{11-13}
                           &    &  & Time & M           &     & Time       & $M(L,n_L)$ & CI(cov.) &   & Time       & $M(L,n_L)$ & CI(cov.)    \\
                      \cline{4-13}
\multirow{3}{*}{40x20} & 50K &  & 53.4 & 19 & & 1.5 & 21(7,1377) & (0.1,96) & & 1.6 & 19(4,2892) & (0.1,100) \\

 & 100K &  &  101.8 & 18 & & 1.5 & 21(7,1438) & (0.1,99) & & 1.6 & 19(4,2886) & (0.1,100) \\

\hline
\multirow{3}{*}{40x40} & 50K &  & 74.6 & 12  & & 1.2 & 13(4,568) & (0.1,71) & & 1.8 & 13(4,1662) & (0.0,75) \\
 & 100K &  &134.1 & 12 & & 1.2 & 14(4,595) & (0.1,72) & & 1.7 & 13(3,1489) & (0.0,75) \\

\hline
\multirow{3}{*}{40x60} & 50K &  & 206.2 & 19  & & 1.9 & 23(3,318) & (0.1,100) & & 1.9 & 22(3,454) & (0.1,100) \\

 & 100K &  & 413.1 & 20  & & 1.9 & 23(3,308) & (0.1,100) & & 1.9 & 23(3,458) & (0.1,100) \\

\hline
\multirow{3}{*}{60x20} & 50K &  & 114.4 & 56  & & 10.7 & 60(9,3675) & (0.1,100) & & 9.2 & 56(5,6048) & (0.1,100) \\

 & 100K &  & 252.2 & 60 & & 11.0 & 60(9,3673) & (0.1,100) & & 9.5 & 56(5,6264) & (0.1,100) \\

\hline
\multirow{3}{*}{60x40} & 50K &  & 502.0 & 65 & & 14.1 & 73(6,921) & (0.1,100) & & 13.8 & 69(4,1620) & (0.1,100) \\

 & 100K &  & 929.4 & 67   & & 14.7 & 73(6,959) & (0.1,100) & & 13.4 & 68(4,1566) & (0.1,100) \\

\hline
\multirow{3}{*}{60x60} & 50K &  & 333.8 & 24 & & 2.7 & 28(4,374) & (0.1,100) & & 2.8 & 27(3,617) & (0.1,100) \\

 & 100K &  & 622.3 & 24   & & 2.7 & 28(4,374) & (0.1,100) & & 2.7 & 27(3,580) & (0.1,100) \\

\hline
\hline
\multirow{1}{*}{40x20} & 50K &  &  63.9 & 17 & & 4.4 & 23(9,3034) & (0.1,97) & & 4.0 & 19(5,5400) & (0.1,99) \\
High & 100K &  & 139.2 & 18 & & 4.4 & 23(9,3013) & (0.1,95) & & 5.3 & 20(5,7066) & (0.0,98) \\

\hline
\multirow{1}{*}{40x40} & 50K &  & 58.9  & 9 & & 1.3 & 11(4,617) & (0.0,69) & & 1.8 & 11(4,1485) & (0.0,65) \\
High & 100K &  & 117.0 & 9  & & 1.3 & 11(4,601) & (0.0,61) & & 1.7 & 10(3,1366) & (0.0,65) \\

\hline
\multirow{1}{*}{40x60} & 50K &  & 711.5 & 25 & & 24.6 & 31(7,1535) & (0.1,93) & & 28.1 & 27(4,3240) & (0.1,96) \\
High & 100K &  &  1520.0 & 24  & & 22.0 & 31(7,1427) & (0.1,92) & & 27.0 & 27(4,3046) & (0.1,93) \\

\hline
\multirow{1}{*}{60x20} & 50K &  & 162.6 & 46 & & 38.0 & 46(10,5558) & (0.1,100) & & 34.3 & 43(6,9720) & (0.1,100) \\
High & 100K &  & 263.2 & 43 & & 42.4 & 46(10,6086) & (0.1,100) & & 33.1 & 43(6,9720) & (0.1,100) \\

\hline
\multirow{1}{*}{60x40} & 50K &  & 432.8 & 31 & & 70.4 & 40(9,2866) & (0.1,99) & & 78.1 & 33(5,5706) & (0.1,99) \\
High & 100K &  & 958.5 &32 & & 78.6 & 40(9,2894) & (0.1,98) & & 75.3 & 33(5,5688) & (0.1,96) \\

\hline
\multirow{1}{*}{60x60} & 50K &  & 673.5 & 23 & & 42.3 & 32(7,1878) & (0.1,92) & & 42.2 & 27(5,3831) & (0.1,94) \\
High & 100K &  & 1591.9 & 25  & & 38.4 & 32(7,1808) & (0.1,85) & & 50.9 & 27(5,4078) & (0.1,89) \\

\hline
\end{tabular}
\end{footnotesize}
\end{table}

We next investigate the sensitivity of chosen parameters such as the sample size increase rate for the proposed approaches. We observe from Table~\ref{initialtest} and Table~\ref{testproposed1} that, as opposed to what has been suggested in theory (Theorem~\ref{geometric}), Algorithm~\ref{algo:adaptive-sequential} with a linear sample size schedule performs competitively with the one with a geometric sample size schedule in our test instances. This may be because the algorithm ``Sequential-BP-L($\Delta$)'' in Table~\ref{initialtest} with a value $\Delta=100$ mimics the behavior of a geometric sequence. To validate this suspicion, Table~\ref{testproposed2} presents the performance of ``Adaptive-seq-BP-L($\Delta$)'' implemented with a linear sample size schedule having a smaller increase $\Delta = 10$ and ``Adaptive-seq-fixed($c_1$)'' with a smaller geometric increase rate $c_1 = 1.1$. We also display the performance of ``Adaptive-seq-dyn($c_0,c_h$)'' with $c_0 = 1.05, c_h =2$ and with $C_1$ starting at $1.1$, alongside these algorithms.

\begin{table}\label{testproposed2}
\centering
\caption{Computational results of Algorithm \ref{algo:adaptive-sequential} with the fixed-width stopping criterion and linear sample size schedule proposed in \cite{bayraksan2012fixed} with an increase of $10$ scenarios per iteration (``Adaptive-seq-BP-L($10$)''), Algorithm \ref{algo:adaptive-sequential} with a geometrically increasing sample size schedule with rate $c_1 = 1.1$ (``Adaptive-seq-fixed($1.1$)''), and Algorithm \ref{algo:adaptive-sequential} with a geometrically increasing sample size schedule having a dynamic rate (``Adaptive-seq-dyn($1.05,3$)''), with $C_1$ starting at $1.1$, on our test instances DEAK and DEAK-H.}\label{table:7}
\begin{footnotesize}
\begin{tabular}{lllllllllll}
\hline
Ins & $N$ & & \multicolumn{2}{c}{Adaptive-seq-BP-L($10$)}  & & \multicolumn{2}{c}{Adaptive-seq-fixed($1.1$)}      &       & \multicolumn{2}{c}{Adaptive-seq-dyn($1.05,3$)}    \\
 \cline{4-5} \cline{7-8} \cline{10-11}
                           &    &  & Time & $M(L,n_L)$           &     & Time       & $M(L,n_L)$   &   & Time       & $M(L,n_L)$    \\
                      \cline{4-11}
\multirow{3}{*}{40x20} & 50K &  & 3.2 & 37(23,551)  & & 2.8 & 36(22,760)  & & 1.7 & 21(6,2797) \\

 & 100K &  & 3.5 & 39(24,579) & & 2.7 & 35(21,721)  & & 1.7 & 21(6,2711) \\

\hline
\multirow{3}{*}{40x40} & 50K &  & 1.4 & 17(8,249) & & 1.4 & 19(9,250)  & & 1.7 & 14(5,1319)  \\

 & 100K &  & 1.3 & 17(7,239) & & 1.4 & 19(9,252)  & & 1.5 & 14(5,1143)  \\

\hline
\multirow{3}{*}{40x60} & 50K &  & 2.5 & 27(6,204)  & & 2.6 & 29(7,188) & & 2.3 & 25(4,421) \\
 & 100K &  & 2.1 & 26(5,186) & & 2.7 & 30(7,193)  & & 2.1 & 25(4,369) \\

\hline
\multirow{3}{*}{60x20} & 50K &  & 102.5 & 144(93,1945)  & & 30.8 & 87(36,2760) & & 10.2 & 58(7,6383)  \\

 & 100K &  & 103.8 & 143(93,1936)  & & 31.1 & 87(36,2768) & & 10.5 & 58(7,6435)  \\

\hline
\multirow{3}{*}{60x40} & 50K &  & 47.9 & 92(24,560)  & & 38.2 & 90(21,682)  & & 15.5 & 73(6,1578)  \\

 & 100K &  & 51.3 & 92(24,572)  & & 37.1 & 88(21,665)  & & 16.7 & 72(6,1733)  \\

\hline
\multirow{3}{*}{60x60} & 50K &  & 3.9 & 35(7,233)  & & 4.4 & 38(9,235)  & & 3.0 & 30(4,459) \\
 & 100K &  & 3.7 & 34(7,230)  & & 4.1 & 37(9,222)  & & 3.3 & 30(5,539)  \\
\hline
\hline
\multirow{1}{*}{40x20} & 50K &  & 11.6 & 53(39,875)  & & 7.9 & 42(28,1410) & & 4.2 & 21(7,5371) \\

High & 100K &  & 12.9 & 54(40,891)  & & 9.5 & 44(30,1612)  & & 5.1 & 21(7,6229)  \\

\hline
\multirow{1}{*}{40x40} & 50K &  & 1.4 & 14(8,246) & & 1.4 & 15(9,231) & & 1.5 & 11(5,1030)  \\

High & 100K &  & 1.5 & 14(8,251)  & & 1.5 & 15(9,238) & & 1.4 & 11(5,956)  \\

\hline
\multirow{1}{*}{40x60} & 50K &  & 263.4 & 77(30,683)  & & 78.0 & 65(24,940)  & & 32.8 & 30(6,3237)  \\

High & 100K &  & 200.6 & 73(28,646)  & & 68.3 & 63(23,859)  & & 29.3 & 31(6,2904)  \\

\hline
\multirow{1}{*}{60x20} & 50K &  & 337.7 & 128(94,1951)  & & 101.6 & 73(38,3413)  & & 34.5 & 45(8,9523)  \\

High & 100K &  & 341.5 & 130(96,1988) & & 97.6 & 72(37,3271)  & & 26.8 & 45(7,7979)  \\

\hline
\multirow{1}{*}{60x40} & 50K &  &2283.2 & 141(59,1271)  & & 268.1 & 85(31,1758)  & & 97.6 & 37(7,5817)  \\
High & 100K & & 2075.0 & 133(55,1196)  & & 261.0 & 83(30,1710)  & & 78.8 & 36(7,5363)  \\
\hline
\multirow{1}{*}{60x60} & 50K &  & 742.6 & 88(35,793)  & & 134.3 & 69(26,1106) & & 53.0 & 31(7,3987)  \\

High & 100K &  & 621.0 & 82(32,735)  & & 144.6 & 67(25,1052)  & & 51.0 & 31(7,3593) \\

\hline
\end{tabular}
\end{footnotesize}
\end{table}
Comparing between Table~\ref{testproposed2} and Table~\ref{testproposed1}, we see that the performance of ``Adaptive-seq-BP-L($10$),'' where the sample size increases by $10$ in each iteration, is significantly worse than ``Adaptive-seq-BP-L($100$),'' where the sample size increases by $100$ in each iteration. Although the final sample size $n_L$ is lower at stopping when a slower linear sample size schedule is utilized, this comes at the price of a larger number of outer and inner iterations, leading to substantially more computational time. The same effect happens to option ``Adaptive-seq-fixed($c_1$)'' as well, but at a much less significant level, where utilizing a smaller $c_1$ ends up with a larger number of outer iterations and slightly more computational time. On the other hand, the performance of Algorithm \ref{algo:adaptive-sequential} with a dynamic increase rate (option ``Adaptive-seq-dyn($1.05,3$)'') does not appear to be impacted much from the choice of the starting increasing rate $C_1$.

\exclude{We observe, however, that the performance of option ``Adaptive-seq-dyn($c_0,c_h$)'' is robust to initial value of $C_1$. This is expected because the sample size increase rate is adjusted depending on how many inner iterations were expended during the previous outer iteration, allowing us to conclude that the option ``Adaptive-seq-dyn($c_0,c_h$)'' is the most preferable due to its efficiency and robustness.} 

\subsubsection{Computational results on other test instances} 
Finally, we present the performance of the best adaptive sequential SAA options (according to the above experiments on DEAK and DEAK-H instances) on an additional set of test instances that have a background in ``real-world'' applications. In particular, we consider Algorithm ``Adaptive-seq-BP-L($100$)'' and Algorithm ``Adaptive-seq-fixed(1.5)''. We consider Algorithm ``Adaptive-seq-fixed(1.5)'' rather than the one with dynamic rate, ``Adaptive-seq-dyn($1.05,3$)'', as we find in our experiments that the parameters $c_0$ and $c_h$ need to be fine tuned for specific instances in order to yield competitive performance. 

\begin{table}
\caption{Computational results of the adaptive partition-based level decomposition approach \cite{vanAckooij_Oliveira_Song_2016} (``PILD-ODA''), Algorithm \ref{algo:adaptive-sequential} with the fixed-width stopping criterion and sample size schedule proposed in \cite{bayraksan2012fixed} (``Adaptive-seq-BP-L($100$)'') and Algorithm \ref{algo:adaptive-sequential} with a geometrically increasing sample size schedule with rate $c_1 = 1.5$ (``Adaptive-seq-fixed(1.5)'') on an additional set of ``real-world'' test instances.}\label{table:5}
\begin{scriptsize}
\begin{tabular}{llllllllllll}
\hline
Ins & $N$ &  \multicolumn{2}{c}{PILD-ODA}  & & \multicolumn{3}{c}{Adaptive-seq-BP-L($100$)}  & & \multicolumn{3}{c}{Adaptive-seq-fixed($1.5$)}   \\
 \cline{3-4} \cline{6-8} \cline{10-12}
                           &     & Time       & M          &     & Time       & $M(L,n_L)$ & CI (cov.) &   & Time       & $M(L,n_L)$ & CI (cov.)    \\
                      \cline{3-12}
\multirow{3}{*}{LandS} & 50K   & 18.8 & 12 & & 0.2 & 10(2,364) & (0.1,100) & & 0.3 & 11(3,292) & (0.1,100) \\

 & 100K  & 35.6 & 12 & & 0.2 & 10(2,366) & (0.1,100) & & 0.3 & 11(3,298) & (0.1,100) \\

\hline
\multirow{3}{*}{gbd} & 50K   & 37.5 & 32 & & 0.5 & 24(3,602) & (0.0,89) & & 0.5 & 25(4,545) & (0.0,94) \\

 & 100K   & 75.9 & 29 & & 0.5 & 24(3,582) & (0.0,94) & & 0.5 & 25(4,576) & (0.0,94) \\
\hline
\multirow{3}{*}{cep} & 20K & 6.9 & 5 & & 0.1 & 4(1,280) & (0.0,99) & & 0.1 & 4(1,145) & (0.0,99) \\
& 50K & 17.1 & 4 & & 0.1 & 4(1,292) & (0.0,100) & & 0.1 & 4(2,151) & (0.0,100) \\

\hline
\multirow{3}{*}{pgp2} & 20K & 13.7 & 20 & & 2.1 & 65(4,700) & (0.1,66) & & 2.7 & 87(5,892) & (0.1,64) \\
& 50K & 31.0 & 22 & & 2.3 & 68(4,732) & (0.1,53) & & 2.2 & 77(4,727) & (0.1,49) \\
\hline
\multirow{3}{*}{4node} & 20K & 211.5 & 54 & & 2.2 & 64(1,146) & (0.0,80) & & 2.1 & 65(1,114) & (0.0,75) \\
& 50K & 487.4 & 51 & & 2.2 & 64(1,144) & (0.0,80) & & 1.9 & 63(1,111) & (0.0,82) \\
\hline

\multirow{3}{*}{retail} & 20K & 82.4 & 54 & & 140.0 & 503(16,3136) & (0.1,80) & & 87.0 & 305(10,6704) & (0.1,91) \\
& 50K & 179.3 & 53 & & 123.9 & 469(15,3040) & (0.1,78) & & 91.9 & 302(10,6998) & (0.1,86) \\
\hline
\multirow{3}{*}{baa99-20} & 20K & 735.3 & 187 & & 593.3 & 347(12,2346) & (0.1,98) & & 383.4 & 354(9,3454) & (0.1,100) \\
& 50K & 1670.4 & 184 & & 659.7 & 366(12,2344) & (0.1,100) & & 380.4 & 356(9,3349) & (0.1,100) \\
\hline
\multirow{3}{*}{20-term} & 2K & 1367.9 & 616 & & 2451.3 & 596(2,212) & (0.1,82) & & 1889.8 & 657(2,148) & (0.1,82) \\
& 5K & 1617.0 & 726 & & 2571.0 & 554(2,280) & (0.1,62) & & 2687.5 & 696(2,188) & (0.1,82) \\
\hline
ssn & 5K  & 6482.9 & 804 & & - & 2028(6,1104) & (17.0,100) & & - & 2477(7,1586) & (16.2,100) \\
\hline

\end{tabular}
\end{scriptsize}
\end{table}

From Table~\ref{table:5}, we see that our conclusions made based on the results from the DEAK instances also stand for most of this additional set of test instances, except instances ssn and 20-term, which we discuss separately since they serve as interesting negative examples. In particular, we see that both sequential sampling algorithms Adaptive-seq-BP-L($100$) and Adaptive-seq-fixed($1.5$) yield high-quality solutions and their solution quality validation much more efficiently than PILD-ODA in most cases. Using a geometric sequence for the sample size schedule (Adaptive-seq-fixed($1.5$) as opposed to Adaptive-seq-BP-L($100$)), further computational enhancements are obtained. The sequential sampling algorithms usually end up with a larger number of inner iterations than the deterministic algorithm PILD-ODA that employs the full set of samples. However, the computational savings brought by the smaller sample sizes used in the sequential sampling algorithms, which are reflected in the amount of work involved per inner iteration, turn out to offset the increase in the number of inner iterations on these instances. This is consistent with what our theoretical results presented in Section~\ref{sec:complexity}. In addition, we can observe some ``undercoverage'' phenomenon for pgp2 instances (as shown in column ``cov.''), which is somewhat expected as the variance associated with their solutions is quite large~\cite{bayraksan2006assessing}. Procedures that employ more than a single replication, such as A2RP proposed in~\cite{bayraksan2006assessing}, can be used to address the issue of ``undercoverage''.

As noted earlier, the problem instances ssn and 20-term are interesting as negative examples, where the proposed sequential sampling algorithms do not yield gains realized in other problem instances. Instance ssn is  challenging most probably due to the high inherent variance of the underlying random variables and the associated computational challenge in solving the second-stage problems while also reporting solution accuracy. For instance, observe from Table~\ref{table:5} that both options Adaptive-seq-BP-L($100$) and Adaptive-seq-fixed($1.5$) fail to provide confidence intervals with a satisfactory width within the stipulated time limit. We suspect that the variance associated with the second-stage optimal cost, along with the strict nature of the stopping criterion, contributes to ssn being in contrast with other test instances appearing in Table~\ref{table:5}. The negative effect of such high variance can be mitigated, at least in principle, by directly using variance reduction techniques, or through alternative stopping ideas such as that proposed in~\cite{sen2016mitigating}. 

The negative context presented by the instance 20-term appears to be different in spirit than ssn. Specifically, observe that Adaptive-seq-BP-L($100$) and Adaptive-seq-fixed($1.5$) exhibit longer computational times than the deterministic algorithm PILD-ODA on instances 20-term despite having a small number of outer iterations and small sample sizes used in each outer iteration. In fact, most of the computational effort is expended on solving the master problem, while the second-stage subproblems can be solved efficiently. The increased effort in solving the master problem could be because the ``warmstart" feature that worked well for other instances is not as effective here, since ``recovering'' a lower cutting-plane approximation using the dual vector information stored from previous iterations, although ``generated on the fly,'' requires the problem to be re-solved with a new right-hand-side at every re-start, and whenever any new first-stage decision vector is generated by the algorithm. This special feature of 20-term --- time-consuming master problems alongside easily solved second-stage problems --- means that our implementation's premise of the total computational burden being dominated by the task of solving second-stage LPs is not true in the 20-term context. The clear lesson from 20-term is then to adapt the implementation to explicitly account for the cost of solving the master problem alongside the cost of solving the second-stage problems, potentially leading to the use of a larger constant $c_1$ in such contexts. In addition, alternative ``warmstarting'' techniques for sequential sampling algorithms, such as those arising in stochastic decomposition~\cite{higle1991stochastic,higle2013stochastic} and stochastic dual dynamic programming algorithms~\cite{de2015improving}, may be more effective in relieving the computational challenges in repeatedly solving the master problem on these instances.

\section{CONCLUDING REMARKS}

We propose an adaptive sequential SAA algorithm to solve 2SLPs. During each iteration of the proposed framework, a piecewise linear convex optimization sample-path problem is generated with a scenario set having a specified size, and solved imprecisely to within a tolerance that is chosen to balance statistical and computational errors. We find that (i) the use of an appropriate solver to solve the sample-path problems, (ii) solving each sample-path problem only imprecisely to an appropriately chosen error tolerance, and (iii) the use of warm starts when solving sample-path problems, are crucial for efficiency.

Our theoretical results suggest that the optimality gap and the distance from the true solution set (of the generated stochastic iterates) converges to zero almost surely and in expectation. Moreover, when the sample sizes are increased according to a geometric rate, the fastest possible convergence rate under iid Monte Carlo sampling is preserved. This result is analogous to the $\mathcal{O}(\epsilon^{-2})$ optimal complexity rate for deterministic non-smooth convex optimization. Slower sample size increases result in a poorer convergence rate. Interestingly, the proposed framework also facilitates the use of dependent sampling schemes such as LHS, antithetic variates, and quasi-Monte Carlo without affecting convergence or the lower bound on the rate results. The use of such variance reduction ideas have been shown to be effective.

Our extensive numerical studies indicate that the proposed adaptive sequential SAA framework is able to produce high-quality solutions to 2SLPs significantly more efficiently than existing decomposition approaches that solve a single sample-path problem generated using a large sample size. Such gains are principally due to the sequential framework, the progressive increase in sample sizes in an optimal way, and the use of ``warm starts" in solving the sample-path problems. Our numerical experience has also revealed problem instances having certain challenging features that are not directly addressed by the implementations that we have used for illustration. These challenges could be mitigated by using alternative solvers that exploit particular problem structures and/or other termination criteria such as that proposed in~\cite{sen2016mitigating}. 

We believe that similarly efficient sequential SAA algorithms are possible for large-scale multi-stage convex stochastic programs, and possibly even stochastic integer programs. The key appears to be principled choices for adaptive sample sizes, solver for the sample-path problems, and adaptive optimality tolerance parameters. Ongoing research efforts are accordingly directed.

\section*{Acknowledgments} 
We greatly appreciate the comments and suggestions of the associate editor and two anonymous referees. The first author acknowledges support provided by the Office of Naval Research (ONR) through ONR Grant N000141712295, and by the National Science Foundation through the grant CMMI 1538050. The second author acknowledges partial support by the National Science Foundation (NSF) under grant CMMI 1854960. Any opinions, findings, and conclusions or recommendations expressed in this material are those of the authors and do not necessarily reflect the views of ONR or NSF.

\DeclareRobustCommand{\VAN}[3]{#3}
\DeclareRobustCommand{\DE}[3]{#3}

\bibliographystyle{siamplain}
\bibliography{adaptiveSeqSamp}

\exclude{
\appendix
\begin{appendices}\label{sec:appA}

\section{Proof of Lemma~\ref{lem:prox}}

\begin{proof} 


From the postulates of the lemma, it is seen that the function $f$ has to be real-valued and convex on $\mathcal{X}$. Also, real-valued convex functions are known to be continuous on the relative interior of the effective domain. Assuming that $\mathcal{X}$ is a proper compact subset of this relative interior, the function $f$ attains its minimum $v^* = \underset{x \in \mathcal{X}}{\min}\{f(x)\}$.

We first prove assertion (a). Since $\mathcal{X}$ is bounded, we know that $(x_k)_{k \geq 1}$ has at least one limit point. Consider any limit point $\tilde{x}^*$ of $(x_k)_{k\geq 1}$, and a sub-sequence $(x_{k_j})_{j\geq 1}$ that satisfies $x_{k_j} \to \tilde{x}^*$ as $j \to \infty$. To arrive at a contradiction, let us suppose that $\tilde{x}^*$ is sub-optimal to $f$ over $\mathcal{X}$, implying that the directional derivative $f'(\tilde{x}^*,u(\tilde{x}^*,f))$ of $f$ along the unit steepest descent direction $u(\tilde{x}^*,f)$ (of $f$ at $\tilde{x}^*$) satisfies  $f'(\tilde{x}^*,u(\tilde{x}^*,f)) < 0$. Then, since $f_{k_j}$ converges to $f$ uniformly, we are assured that there exists $\tilde{\delta}>0$ and $t \leq \tilde{\delta}$ such that for large enough $j$, \begin{equation}\label{subseqdirder}f_{k_j}(x_{k_j} + tu(x_{k_j},f_{k_j})) \leq f_{k_j}(x_{k_j}) + \frac{1}{2}tf'(\tilde{x}^*,u(\tilde{x}^*,f)).\end{equation} Recalling the notation $\delta_{k_j+1}:= \sup_{x \in \mathcal{X}} |f_{k_j+1}(x) - f_{k_j}(x)|$ and $v_{k_j+1}^* := \min\left\{ f_{k_j+1}(x), x \in \mathcal{X}\right\}$, the inequality in (\ref{subseqdirder}) implies that for $j \geq j^*$, \begin{equation}\label{descent} v_{k_j+1}^*   \leq f_{k_j}(x_{k_j}) +  \frac{\tilde{\delta}}{2}f'(\tilde{x}^*,u(\tilde{x}^*,f)) + \delta_{k_j+1}.\end{equation} Since $x_{k_j+1}$ is $\epsilon_{k_j+1}$-optimal to $f_{k_j+1}$ over $\mathcal{X}$, the inequality in (\ref{descent}) implies that \begin{equation}\label{contrabasic} f_{k_j+1}(x_{k_j+1}) \leq  f_{k_j}(x_{k_j}) + \frac{\tilde{\delta}}{2}f'(\tilde{x}^*,u(\tilde{x}^*,f)) + \delta_{k_j+1} + \epsilon_{k_j +1}\end{equation} Now notice that (\ref{contrabasic}) presents a contradiction because the left hand side is tending to $f(\tilde{x}^*)$ since we have assumed $(x_{k_j})_{j \geq 1} \to \tilde{x}^*$, but the right hand side is tending to $f(\tilde{x}^*) +\frac{\tilde{\delta}}{2}\,f'(\tilde{x}^*,u(\tilde{x}^*,f)) < f(\tilde{x}^*)$. Conclude that assertion (a) in Lemma~\ref{lem:prox} holds. 

Let us now prove the assertion (b) of Lemma~\ref{lem:prox}. Consider selections $x^* \in \underset{x \in \mathcal{X}}{\arg\min}\{f(x)\}$ and $x_j^* \in \underset{x \in \mathcal{X}}{\argmin}\{f_j(x)\}, j \geq 1$. (The selections $x^*$, $(x_j^*)_{j \geq 1}$ exist since the functions $f$ and $f_j, j \geq 1$ are continuous on the compact set $\mathcal{X}$.) Now write \begin{align}\label{fntelescope} f(x_k) - f(x^*) &= f(x_k) - f_k(x_k) + f_k(x_k) - f(x^*) \nonumber \\
&= f(x_k) - f_{n+1}(x_k) + T_n + S_n + f_{n+1}(x_{n+1}) - f(x^*),\end{align} where \begin{equation}\label{tnbd} T_n:= \sum_{j=k}^n f_{j+1}(x_k) - f_j(x_k) \leq \sum_{j=k}^n \delta_j\end{equation} and \begin{align}\label{snbd} S_n &:= \sum_{j=k}^{n} f_j(x_j) - f_{j+1}(x_{j+1}) \nonumber \\
& = \sum_{j=k}^{n} \left(f_j(x_j) - v_j^*\right) + \sum_{j=k}^{n} \left(v_{j+1}^* - f_{j+1}(x_{j+1}) \right) + \sum_{j=k}^{n} \left(v_j^* - v_{j+1}^*\right).\end{align} Now, notice that since  $x_j$ is $\epsilon_j$-optimal to $f_j(\cdot)$ over $\mathcal{X}$, $f_j(x_j)  - v_j^*\leq \epsilon_j$ implying that \begin{equation}\label{fxjbd} f_j(x_j) - v_j^* \leq \epsilon_j. \end{equation} Similarly, \begin{equation} \label{fxjplusbd} f_{j+1}(x_{j+1}) - v_{j+1}^* \leq \epsilon_{j+1}. \end{equation} Also, \begin{align}\label{fundiffbd} v_j^* - v_{j+1}^* & \leq \max\{ f_j(x_{j+1}^*) - f_{j+1}(x_{j+1}^*),f_{j}(x_{j}^*) - f_{j+1}(x_{j}^*)\} \nonumber \\
& \leq \delta_{j+1}.\end{align} Plugging \eqref{fxjbd}, \eqref{fxjplusbd}, and \eqref{fundiffbd} in \eqref{snbd}, we get \begin{equation} \label{snbdagain} S_n \leq \sum_{j=k}^n \epsilon_j + \sum_{j=k}^n \epsilon_{j+1} + \sum_{j=k}^n \delta_{j+1}.\end{equation} Now plug \eqref{tnbd} and \eqref{snbdagain} in \eqref{fntelescope}, and send $n \to \infty$ for fixed $k$ to get 
\[
f(x_k) - f(x^*) \leq 2\sum_{j=k}^{\infty} (\epsilon_j + \delta_j),
\] 
proving the assertion (b) of Lemma~\ref{lem:prox}.

Part (c) of Lemma~\ref{lem:prox} follows simply from the assertion in part (b) and the assumed growth rate condition.

\end{proof}


\section{A conceptual level method for solving convex non-smooth optimization problem}\label{sec:conceptual}
Consider a generic convex non-smooth optimization problem:
\begin{equation}\label{generic-nonsmooth}
r^* := min_{x\in \mathcal{X}} r(x),
\end{equation} 
where $\mathcal{X}$ is a nonempty convex and closed set in $\mathbb{R}^n$, and $r(\cdot)$ is a convex and possibly non-smooth function. We consider the conceptual algorithm proposed in \cite{Cruz_Oliveira_2014}[Algorithm 1], which is summarized in Algorithm~\ref{A:2}. The key assumption is the availability of finding a level target $r^{lev}_t$ for each iteration $t$ that is lower-bounded by $r^*$. 

\begin{assumption}{(See also equations (4) and (5) in~\cite{Cruz_Oliveira_2014})}\label{key-assumption}
For each iteration $t$, there exists a level target parameter $r^{lev}_t$ such that
\begin{equation}\label{key-condition}
r(x_t) > r^{lev}_t \geq r^*, \ \forall t, \ \text{ and }  \lim_{t\rightarrow \infty} r^{lev}_t = r^*.
\end{equation}
\end{assumption}

For example, the requirement in Assumption~\ref{key-assumption} holds, if the optimal objective value $r^*$ is known. In this case $r^{lev}_t$ can be simply selected as: $r^{lev}_t = (1-\lambda)r(x_t) + \lambda r^*$ for some $\lambda\in (0,1)$. 

\begin{algorithm}
\caption{A conceptual level bundle algorithm for non-smooth convex optimization~\cite{Cruz_Oliveira_2014}[Algorithm 1].}
\label{A:2} 
\begin{algorithmic}
\STATE{{\bf Input:} Given $x_0\in \mathcal{X}$, obtain $r(x_0)$ and $g_0\in \partial r(x_0)$. Set $\hat{x} = x_0, t = 0$. }
 \WHILE{stopping criterion is not met}
        \STATE{Select $r^{lev}_t$ satisfying~\eqref{key-condition}. Compute the next iterate:}
        \begin{center}
        $x_{t+1} = \arg\min_{x\in \mathcal{X}}\{\|x-\hat{x}\|^2 \mid (x-x_t)^\top (\hat{x}-x_t) \leq 0, g^\top_j (x-x_j) + r(x_j) \leq r^{lev}_t, \ \forall j < t\}.$ 
        \end{center}
	\IF{stopping criterion is met}
	
		\STATE{Terminate the procedure.}
	\ELSE
	
		\STATE{Compute $r(x_{t+1})$ and $g_{t+1}\in \partial r(x_{t+1})$. Set $t \leftarrow t+1$.}
	\ENDIF
 \ENDWHILE
\end{algorithmic}
\end{algorithm}

\begin{lemma}\label{level-ideal-complexity} 
Let $x_*$ be an optimal solution to \eqref{generic-nonsmooth}, given an initial iterate $\hat{x}$, Algorithm~\ref{A:2} will run at most $\mathcal{O}\left(\frac{\Lambda\|x_*-\hat{x}\|}{\epsilon}\right)^2$ iterations before stopping with a stopping tolerance parameter $\epsilon$, where $\Lambda$ is the Lipschitz constant on the objective function of \eqref{generic-nonsmooth}. 
\end{lemma}
\begin{proof}
The proof of this lemma directly follows the arguments stipulated in~\cite{Cruz_Oliveira_2014}.
\end{proof}

\exclude{
\begin{proof}
Let $j\in \mathbb{Z}^+$ be the iteration index. Based on~\eqref{key-condition} in Assumption~\ref{key-assumption}, according to Algorithm~\ref{A:2}:
\begin{equation}\label{eq0}
0 \geq (x_{j+1} - \hat{x})^\top (\hat{x}-x_j) = \frac{1}{2}(\|x_{j+1}-x_j\|^2 - \|x_{j+1}-\hat{x}\|^2 + \|x_j-\hat{x}\|^2), 
\end{equation}
and thus
\begin{equation}\label{eq1}
\|x_{j+1}-\hat{x}\|^2 \geq \|x_{j}-\hat{x}\|^2  + \|x_{j+1}-x_j\|^2 .
\end{equation}

Then according to the definition of the level set (see Algorithm~\ref{A:2}), we get:
\[
g_j^\top (x_{j+1}-x_j) + h(x_j) \leq h^{lev}_j,
\]
so that
\[
g_j^\top (x_j-x_{j+1}) \geq h(x_j)-h^{lev}_j.
\]
Note that $g_j^\top (x_j-x_{j+1}) \leq \|g_j\|\|x_j-x_{j+1}\|$, we have:
\begin{equation}\label{eq2}
\|x_{j+1} - x_j \|^2 \geq \frac{(h(x_j) - h^{lev}_j)^2}{\|g_j\|^2}.
\end{equation}

Combining \eqref{eq1} and \eqref{eq2}, we get:
\begin{equation*}
\|x_{j+1}-\hat{x}\|^2 \geq \|x_{j}-\hat{x}\|^2  + \frac{(h(x_j)-h_j^{lev})^2}{\|g_j\|^2}\geq  \|x_{j}-\hat{x}\|^2  + \frac{(h(x_j)-h_j^{lev})^2}{\Lambda^2},
\end{equation*}
where $\Lambda$ is the Lipchitz constant of $f(\cdot)$. Rewriting this inequality as:
\begin{equation*}
\|x_{j+1}-\hat{x}\|^2 -\|x_{j}-\hat{x}\|^2\geq    \frac{(h(x_j)-h_j^{lev})^2}{\Lambda^2},
\end{equation*}
and for any $t\in \mathbb{Z}^+$, summing it over $j=1,\ldots,t$, we get:

\begin{equation*}
\|x_{t+1}-\hat{x}\|^2 \geq \frac{1}{\Lambda^2}\sum_{j=0}^t (h(x_j)-h_j^{lev})^2.
\end{equation*}

\cite{Cruz_Oliveira_2014}[Theorem 3.4] (Equation (17)) then yields: 
\begin{equation*}
\|x_{*}-\hat{x}\|^2 \geq \|x_{t+1}-\hat{x}\|^2 \geq \frac{1}{\Lambda^2}\sum_{j=0}^t (h(x_j)-h_j^{lev})^2.
\end{equation*}

Now according to the definition that
\[
\mbox{$h^{lev}_t = \alpha \underline{h}_t + (1-\alpha) \bar{h}_t$, with $\alpha \in (0,1]$ and $\bar{h}_t = \min_{j\leq t} \{h(x_j)\}$.}
\]
Note that $h^{lev}_t  = \bar{h}_t -\alpha \Delta_t$, where $\Delta_t = \bar{h}_t - \underline{h}_t$ is the estimated optimality gap. Suppose the algorithm does not stop at iteration $t$, i.e., $\Delta_t > \epsilon$, plugging in the definition of $h^{lev}_t$ into the above inequality, and we get:
\begin{align*}
\|x_*-\hat{x}\|^2 & \geq  \frac{1}{\Lambda^2}\sum_{j=0}^t (h(x_j)-h_j^{lev})^2  \geq \frac{1}{\Lambda^2}\sum_{j=0}^t (\bar{h}_j-h_j^{lev})^2 \\
 & = \frac{\alpha^2}{\Lambda^2}\sum_{j=0}^t \Delta_j^2  > \frac{\alpha^2}{\Lambda^2}\sum_{j=0}^t \epsilon^2 \\
 & = (t+1)\left(\frac{\alpha\epsilon}{\Lambda}\right)^2,
\end{align*}
i.e.,
\[
t + 1 < \left(\frac{\Lambda\|x_*-\hat{x}\|}{\alpha\epsilon}\right)^2.
\]

So the algorithm will run at most $\left(\frac{\Lambda\|x_*-\hat{x}\|}{\alpha\epsilon}\right)^2$ iterations before stopping with $\Delta_t\leq \epsilon$.
\end{proof}
}

\section{The adaptive partition-based level decomposition with on-demand accuracy and its warmstarting procedure}
\exclude{
We unify the fine and coarse oracles described in
the previous section, under the framework of inexact level
bundle method with on-demand accuracy. The inexact oracle is
facilitated by using a scenario partition $\mathcal{P}$, which gives coarse function and subgradient information (represented by $\alpha_{\MP}$
and $\beta_{\MP}$) by solving cluster-based subproblems \eqref{cluster-subprob} if a certain descent target $\gamma^{\innerit}$ is proved
not achievable, and gives exact function and subgradient
information by solving all $N$ scenario-based subproblems \eqref{subprob},
otherwise. The second-stage dual optimal solutions collected from solving the scenario subproblems could then be used for guiding the
refinement of the partition $\mathcal{P}$. Since refinement of a partition $\MP$ is done sequentially (following some order) for $j = 1,2,\ldots, L^\innerit$, during this process the function value estimate becomes larger (more accurate) sequentially, and may go beyond the descent target $\gamma^\innerit$ after refining $J$ clusters ($1\leq J < L^\innerit$), in which case we can stop with a \emph{semi-coarse} cut (see more details in Step 4 of Algorithm \ref{algo:PILD-ODA-SAA}).  

\paragraph{Semi-coarse cuts.}
Given an iterate $x^\innerit\in X$, the following \emph{semi-coarse cut} is generated by solving the cluster-based subproblems \eqref{cluster-subprob} with $x^\innerit$ for $j = 1,2,\ldots, J$ for $1\leq J < L^\innerit$  corresponding to partition $\MP^\innerit = \{P^\innerit_1,P^\innerit_2,\cdots, P^\innerit_{J}\}$, and the scenario-based subproblems \eqref{subprob} with $x^\innerit$ for $k\in \mathcal{N}\setminus \bigcup_{j=1}^J P^\innerit_j$:
\begin{equation}\label{cut-semicoarse}
Q(x)  \geq \alpha_{\MP,J}^\innerit x+ \beta_{\MP,J}^\innerit, \ \forall x \in X, \quad \text{with}\quad
\left\{
\begin{array}{llll}
\alpha_{\MP,J}^\innerit &= -\frac{1}{|N|}\left[\sum_{j=1}^{J}(\lambda^{\MP^\innerit}_j)^\top \bar{T}_j^{\MP^\innerit} + \sum_{k\in \mathcal{N}\setminus \bigcup_{j=1}^J P^\innerit_j} \lambda_k^\top T_k \right]\\
\beta_{\MP,J}^\innerit &= \frac{1}{|N|}\left[\sum_{j=1}^{J} (\bar{h}^{\MP^\innerit}_j)^\top\lambda^{\MP^\innerit}_j + \sum_{k\in \mathcal{N}\setminus \bigcup_{j=1}^J P^\innerit_j} h^\top_k\lambda_k \right]\,,
\end{array}
\right.
\end{equation}
where $\lambda^{\MP^\innerit}_j$ is an optimal dual solution to 
problem~\eqref{cluster-subprob} for $j = 1,2,\ldots, J$, and $\lambda_k$ is an optimal dual solution to
problem~\eqref{subprob} for $k\in \mathcal{N}\setminus \bigcup_{j=1}^J P^\innerit_j$.

\begin{algorithm}\label{algo:PILD-ODA-SAA}
\begin{description}
\item [{\bf Step 0 (initialization).}]
Let $\innerit=0$, $\kappa,\kappa_f \in (0,1)$, and stopping tolerance $\tau>0$. Choose $\hat{x}^0 = x^0\in \mathcal{X}$ and obtain an initial upper bound $\bar{z}^0$ by computing $Q_k(x^0), \ \forall k\in N$.
Choose a partition   $\mathcal{P}^0=\{P^0_1,P^0_2,\ldots,P^0_{L_0}\}$ and a lower bound $\underline{z}^0$. Set $v^0 = (1-\kappa)(\bar{z}^0-\underline{z}^0)$.

\item [{\bf Step 1 (stopping test).}]
    If {$G^\innerit := \bar{z}^\innerit-\underline{z}^\innerit \leq \tau$}, stop: $\hat x^\innerit$ is an $\tau$-solution to problem Problem $P$.

\item [{\bf Step 2 (master problem).}]
  Define  $f^\innerit_{lev}= \bar{z}^\innerit - v^\innerit$. Solve the master QP~\eqref{project-QP} with $\hat{x}^{\innerit}$ as the stabilization center. If{ problem~\eqref{project-QP} is feasible}, then obtain $x^{\innerit}$ by solving \eqref{project-QP}. Otherwise, update $\underline{z}^{\innerit+1} = f_{lev}^\innerit $, $\bar{z}^{\innerit+1}= \bar{z}^\innerit$, and $v^{\innerit+1} = (1-\kappa)(\bar{z}^{\innerit+1}-\underline{z}^{\innerit+1})$.
Go back Step 1.

\item [{\bf Step 3 (oracle call).}]
Select a new descent target $\gamma^{\innerit} = \bar{z}^\innerit - \kappa_f v^\innerit$.
    Compute $\mathcal{Q}^{\mathcal{P}^\innerit}_j(x^{\innerit})$ and let $\lambda^{\mathcal{P}^\innerit}_j$ be the associated optimal dual solution for all $j=1,\ldots,L_\innerit$. Set $f = c^\top x^{\innerit}+\sum_{j=1}^{L_\innerit}\mathcal{Q}^{\mathcal{P}^\innerit}_j(x^{\innerit})$.
     If {$f > \gamma^{\innerit}$}, then compute a coarse cut as in~\eqref{cut-coarse} and go to Step 5.
\item    [{\bf Step 4 (partition refinement).}]
For each cluster $P^\innerit_j \in \{P^\innerit_1,\ldots,P^\innerit_{L_\innerit}\}$ from partition $\mathcal{P}^\innerit$:
\begin{itemize}
\item Compute $Q_k(x^{\innerit+1})$ for all $k \in P^\innerit_j$ and let $\lambda_k$ be the associated dual optimal solution. 
\item Refine cluster $P^\innerit_j$ according to dual optimal solutions $\lambda_k, k\in P^\innerit_j$.
\item Improve the function estimate at $x^{\innerit+1}$ by $f = f + \left[\sum_{k \in P^\innerit_j} Q_k(x^{\innerit+1}) - \mathcal{Q}^{\mathcal{P}^\innerit}_j(x^{\innerit+1})\right]$. 
\item If {$f > \gamma^{\innerit}$}, compute a semi-coarse cut~\eqref{cut-semicoarse} using mixed coarse/fine information: use $\lambda_k$ if available, otherwise use $\lambda^{\mathcal{P}^\innerit}_j$. Terminates this loop and move to Step 5.
\item If {$f \leq \gamma^{\innerit}$}, continue to the next cluster.
\end{itemize}
\item [{\bf Step 5 (stabilization center updating).}] Set $\underline{z}^{\innerit+1}= \underline{z}^\innerit$.
\begin{itemize}
\item If {$f < \gamma^{\innerit}$}, declare a \emph{Serious Step}. Update $\bar{z}^{\innerit+1} = f$, $\hat x^{\innerit+1}= x^{\innerit}$,  $v^{\innerit+1}= \min\{v^\innerit, \bar{z}^{\innerit+1} -\underline{z}^{\innerit+1}\}$. 
\item If {$f \geq \gamma^{\innerit}$}, declare a \emph{Null Step}. Set $\bar{z}^{\innerit+1}= \bar{z}^\innerit$, $\hat x^{\innerit+1}= \hat x^{\innerit}$ and $v^{\innerit+1} = v^\innerit$. 
\end{itemize}
Set $\innerit = \innerit+1$ and go to Step 1.
\caption{Level decomposition with a unified oracle by adaptive partitions.}
\end{description}

\end{algorithm}

Algorithm~\ref{algo:PILD-ODA-SAA} is a particular version of \cite{Oliveira_Sagastizabal_2014}[Algorithm 3.3], corresponding to the partly inexact oracle.} 

Since Algorithm~\ref{A:2} is only a conceptual algorithm (under Assumption~\ref{key-condition}), for our numerical experiments, we use a variant of level bundle method that is practically implementation, the adaptive partition-based level decomposition method, recently developed by \cite{vanAckooij_Oliveira_Song_2016}. For details about this algorithm, please refer to \cite{vanAckooij_Oliveira_Song_2016}. The performance of this algorithm has shown to be more competitive than most of the state-of-the-art algorithms for solving 2SLPs with a finite sample. We use this algorithm as the Solver-$\mathcal{A}$ for solving the sample-path problem $(P_\ell)$ in each outer iteration $\ell$ during our adaptive sequential SAA procedure. We argue that this algorithm, denoted as PILD-ODA, together with the following warmstart procedure, approximates the behavior of the conceptual algorithm (Algorithm~\ref{A:2}) quite well. 

We next illustrate how information from solving the sample-path problem $(P_{\ell})$ with sample $\MM_\ell$ at previous outer iterations $1,2\ldots, \ell$ can be leveraged to solve the sample-path problem $(P_{\ell+1})$ with sample $\MM_{\ell+1}$. We note that this information is not necessarily only about the solution $\hat{x}^\ell$, and could contain, e.g., dual multipliers collected from solving second-stage subproblems \eqref{subprob} during the inner iterations of Algorithm PILD-ODA, which is possible precisely because of the following key assumption of fixed recourse, which has been exploited in various efficient methods to solve 2SLPs, such as \cite{higle1991stochastic,Oliveira_Sagastizabal_2014,wolf2014applying,vanAckooij_Oliveira_Song_2016}. 

\begin{assumption}\label{ass:fixedrecourse} 
The 2SLP problem $(P)$ has \emph{fixed recourse}, that is, the second-stage cost vector $d\in \mathbb{R}^{n_2}$ and recourse matrix $W\in \mathbb{R}^{m_2\times n_2}$ are not subject to uncertainty. Consequently, the dual polyhedron of the second-stage problem \eqref{subprob} is identical for all realization of random variables involved in problem $(P)$:
\[
\mathcal{D} := \{\lambda \mid W^\top \lambda \leq d\}.
\]\end{assumption}

\exclude{As indicated in \cite{pasupathy2010choosing}, being able to leverage solution information from previous iteration (i.e., ``warmstart'') to solve the sample-path problem in each iteration, and the fact that the sample-path problem is only solved up to certain precision, are two keys to the success of the retrospective approximation framework in terms of its computational effectiveness. We note that these are somewhat ignored by the literature on sequential sampling procedure~\cite{bayraksan2011sequential, bayraksan2012fixed} - where the sample-path problems are assumed to be solved to optimality by stand alone solvers without taking advantage of previous solutions.} 
At the $\ell$-th outer iteration, let $\mathcal{D}^\ell \subseteq \mathcal{D}$ be the set of dual multipliers accumulated so far (optimal dual multipliers to the second-stage subproblems \eqref{subprob} encountered during the solution procedure), the master problem to the sample-path problem $(P_{\ell})$ with sample $\MM_\ell$ can be initialized as:
\begin{subequations}\label{warmstart}
\begin{align}
\min_{x\in \mathcal{X}} \ & c^\top x + \frac{1}{m_\ell}\theta \\
\text{s.t. } & \theta \geq \sum_{i=1}^{m_\ell} \lambda_i^\top (h(\xi_i)-T(\xi_i)x), \ \forall (\lambda_i)_{i=1}^{m_\ell} \in \overbrace{\mathcal{D}^\ell\times \mathcal{D}^\ell\times \cdots \mathcal{D}^\ell}^{m_\ell \text{ times}} \label{warmstart-ineq}
\end{align}
\end{subequations}

When $|\mathcal{D}^\ell|$ gets large, the number of constraints \eqref{warmstart-ineq} may get excessively large. Therefore, one could solve \eqref{warmstart} using a cutting plane algorithm by adding them on the fly. 

\begin{algorithm}
\caption{Initialization of the sample-path problem $(P_{\ell})$ with a set of dual multipliers $\mathcal{D}^\ell$.} 
\label{algo:warmstart}
\begin{algorithmic}
\STATE{{\bf Input:} A starting solution $\hat{x}^0\in \mathcal{X}$, a collection of dual multipliers $\mathcal{D}^\ell$. Start the master problem \eqref{warmstart} with none of constraints \eqref{warmstart-ineq}.}
 \WHILE{$loopflag = 1$}
 
        \STATE{Solve the master problem, obtain an optimal objective value $\hat{z}$ and the corresponding optimal solution $(\hat{\theta}, \hat{x})$.}
        \FOR {each $\xi^\ell_i, \ i=1,2,\ldots, m_\ell$}
		\STATE{Compute $\lambda_i \in \arg\max_{\lambda\in \mathcal{D}^\ell} \lambda^\top (h(\xi^\ell_i)-T(\xi^\ell_i)\hat{x})$.}
	\ENDFOR
	\STATE{Arrange a constraint of type \eqref{warmstart-ineq} with $\{\lambda_i\}_{i=1}^{m_\ell}$. }
	\IF{the arranged constraint is violated by $(\hat{\theta}, \hat{x})$}
		\STATE{Add it to the master problem.}
	\ELSE
		\STATE{$loopflag = 0$.}
	\ENDIF
\ENDWHILE
\STATE{Return $\hat{x}$ and $\hat{z}$.}
\end{algorithmic}
\end{algorithm}

We implement this warm start procedure (Algorithm \ref{algo:warmstart}) prior to executing Algorithm PILD-ODA to solve the sample-path problem $(P_{\ell})$, obtain $\hat{x}$ and $\hat{z}$ and initialize Algorithm PILD-ODA by setting the starting solution to be $x^0 = \hat{x}$. We keep collecting optimal dual solutions $\lambda$'s corresponding to the second-stage subproblems solved throughout the process of Algorithm \ref{algo:adaptive-sequential} into $\mathcal{D}^\ell$. To avoid $|\mathcal{D}^\ell|$ to get too large, we discard a dual vector if the euclidean distance between this vector and an existing vector in $\mathcal{D}^\ell$ is lower than a given threshold. We note that we could also choose to use the set of samples from the previous iteration $\MM_\ell$ as a part of the new set of samples $\MM_{\ell+1}$, at least periodically, in which case more warm starts are expected~\cite{bayraksan2011sequential, bayraksan2012fixed}.

\exclude{
\paragraph{Remark} The effort of generating constraints \eqref{warmstart-ineq} is much less than that of a standard Benders cut. Indeed, each iteration of Algorithm \ref{algo:warmstart} only involves $m_\ell$ vector-matrix multiplications (one for each scenario $\xi^\ell_i, i = 1,2,\ldots, m_\ell$); while generating a standard Benders cut involves solving $m_\ell$ second-stage linear programs \eqref{subprob}. One could further consider constraints \eqref{warmstart-ineq} as a special type of coarse cuts, and integrate Algorithm \ref{algo:warmstart} within Algorithm PILD-ODA. We also remark that the warm start effect is not only captured by the iterates $\hat{x}^\ell$ produced after each outer iteration $\ell$. In fact, the majority of the warm start effect comes from ``reconstructing'' a good cutting-plane approximation to $Q^\ell_{m_\ell}(x)$ using existing collection of dual vectors $\mathcal{D}^\ell$. Unfortunately, the warm start effect brought by keeping track of the dual multipliers is very challenging to characterize theoretically.
}

\end{appendices}

}

\end{document}